\newtheorem{thm}{Theorem}[section]
\newtheorem{lem}[thm]{Lemma}
\newtheorem{prop}[thm]{Proposition}
\newtheorem{cor}[thm]{Corollary}
\newcommand{\Hom}{{\rm Hom}}
\newcommand{\GL}{\mathrm{GL}}
\newcommand{\GSp}{\mathrm{GSp}}
\newcommand{\Sp}{\mathrm{Sp}}
\newcommand{\SL}{\mathrm{SL}}
\newcommand{\id}{\mathrm{id}}
\DeclareMathOperator{\Sym}{Sym}
\newcommand{\PGL}{\mathrm{PGL}}
\newcommand{\PSL}{\mathrm{PSL}}
\newcommand{\calL}{\mathcal{L}}
\newcommand{\calP}{\mathcal{P}}
\newcommand{\FF}{\mathbb{F}}
\newcommand{\PP}{\mathbb{P}}
\newcommand{\ZZ}{\mathbb{Z}}
\newcommand{\Fbar}{\overline{\FF}}
\newcommand{\rk}{\mathrm{rk}}
\begin{document}

\selectlanguage{british}

\title{Classification of subgroups of symplectic groups over finite fields containing a transvection}
\author{
Sara Arias-de-Reyna\footnote{Universit\'e du Luxembourg,
Facult\'e des Sciences, de la Technologie et de la Communication,
6, rue Richard Coudenhove-Kalergi,
L-1359 Luxembourg, 
Luxembourg, sara.ariasdereyna@uni.lu},
Luis Dieulefait\footnote{Departament d'\`Algebra i Geometria,
Facultat de Matem\`atiques,
Universitat de Barcelona,
Gran Via de les Corts Catalanes, 585,
08007 Barcelona, Spain, ldieulefait@ub.edu},
Gabor Wiese\footnote{Universit\'e du Luxembourg,
Facult\'e des Sciences, de la Technologie et de la Communication,
6, rue Richard Coudenhove-Kalergi,
L-1359 Luxembourg, Luxembourg, gabor.wiese@uni.lu}}
\maketitle

\begin{abstract}
In this note we give a self-contained proof of the following classification (up to conjugation)
of finite subgroups of $\GSp_n(\Fbar_\ell)$ for $\ell \ge 5$, which can be derived from
work of Kantor: $G$ is either reducible, symplectically imprimitive or it contains $\Sp_n(\FF_\ell)$.
This result is for instance useful for proving `big image' results for symplectic
Galois representations.

MSC (2010): 20G14 (Linear algebraic groups over finite fields),
\end{abstract}

\section{Introduction}

In this paper we provide a self-contained proof of a classification result of subgroups of the general symplectic group over a finite field of characteristic $\ell\geq 5$ that contain a nontrivial transvection (cf.\ Theorem~\ref{thm:gp} below).

The motivation for this work came originally from Galois representations attached to automorphic forms and the applications to the inverse Galois problem. In a series of papers, we prove that for any even positive integer~$n$ and any positive integer $d$, $\mathrm{PSp}_n(\mathbb{F}_{\ell^d})$ or $\mathrm{PGSp}_n(\mathbb{F}_{\ell^d})$ occurs as a Galois group over the rational numbers for a positive density set of primes~$\ell$ (cf.\ \cite{partI}, \cite{partII}, \cite{partIII}). A key ingredient in our proof is Theorem~\ref{thm:gp}. When we were working on this project, we were not aware that this result could be obtained as a particular case of some results of Kantor~\cite{Kantor1979}, hence we worked out a complete proof, inspired by the work of Mitchell on the classification of
subgroups of classical groups. More precisely,
in an attempt to generalise Theorem~1 of~\cite{Mit} to arbitrary
dimension, one of us (S.~A.-d.-R.) came up with a precise strategy
for Theorem~\ref{thm:gp}.
Several ideas and some notation are borrowed from~\cite{LZ}.

We believe that our proof of Theorem~\ref{thm:gp} can be of independent interest, since it is self-contained and does not require any previous knowledge on linear algebraic groups beyond the basics.

In order to fix terminology, we recall some standard definitions.
Let $K$ be a field. An $n$-dimensional $K$-vector space~$V$ equipped with a symplectic form
(i.e.\ nonsingular and alternating), denoted by $\langle v,w \rangle = v \bullet w$
for $v,w \in V$, is called a {\em symplectic $K$-space}.
A $K$-subspace $W \subseteq V$ is called a {\em symplectic $K$-subspace} if the restriction of
$\langle v,w \rangle$ to $W \times W$ is nonsingular (hence, symplectic).
The {\em general symplectic group} $\GSp(V,\langle \cdot,\cdot\rangle) =: \GSp(V)$
consists of those $A \in \GL(V)$ such that there is $\alpha \in K^\times$,
the {\em multiplier} (or {\em similitude factor}) of~$A$,
such that we have $(Av) \bullet (Aw) = \alpha(v \bullet w)$ for all $v,w \in V$.
The multiplier of~$A$ is denoted by~$m(A)$.
The {\em symplectic group} $\Sp(V,\langle \cdot,\cdot\rangle) =: \Sp(V)$
is the subgroup of~$\GSp(V)$ of elements with multiplier~$1$.
An element $\tau \in \GL(V)$ is a {\em transvection} if $\tau - \id_V$ has rank~$1$,
i.e.\ if $\tau$ fixes a hyperplane pointwisely, and there is a line $U$ such that
$\tau(v)-v \in U$ for all~$v \in V$. The fixed hyperplane is called the {\em axis}
of $\tau$ and the line $U$ is the {\em centre} (or the {\em direction}).
We will consider the identity as a ``trivial transvection''.
Any transvection has determinant~$1$.
A {\em symplectic transvection} is a transvection in~$\Sp(V)$.
Any symplectic transvection has the form
$$ T_v[\lambda] \in \Sp(V): u \mapsto u + \lambda \langle u,v \rangle v$$
with {\em direction vector} $v \in V$ and {\em parameter} $\lambda \in K$
(see e.g.\ \cite{Ar}, pp.~137--138).

The main classification result of this note is the following.
A short proof, deriving it from~\cite{Kantor1979}, is contained in~\cite{partII}.

\begin{thm}\label{thm:gp}
Let $K$ be a finite field of characteristic at least~$5$ and $V$ a symplectic
$K$-vector space of dimension~$n$.
Then any subgroup~$G$ of $\GSp(V)$ which contains a nontrivial symplectic transvection
satisfies one of the following assertions:
\begin{enumerate}[1.]
\item\label{thm:gp:1} There is a proper $K$-subspace $S \subset V$ such that $G(S) = S$.
\item\label{thm:gp:2} There are nonsingular symplectic $K$-subspaces~$S_i \subset V$ with $i=1,\dots,h$
of dimension~$m$ for some $m < n$ such that
$V = \bigoplus_{i=1}^h S_i$ and for all $g \in G$ there is a permutation $\sigma_g \in \Sym_h$
(the symmetric group on $\{1,\dots,h \}$)
with $g(S_i) = S_{\sigma_g(i)}$. Moreover, the action of $G$ on the set $\{S_1, \dots, S_h\}$ thus defined is transitive.
\item\label{thm:gp:3} There is a subfield L of K such that the subgroup generated by the symplectic transvections of $G$ is conjugated (in $\GSp(V)$) to $\Sp_{n}(L)$.
\end{enumerate}
\end{thm}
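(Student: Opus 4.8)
The strategy is to study the normal subgroup $N \trianglelefteq G$ generated by all the symplectic transvections in $G$, and to analyze its action on $V$ through a dichotomy on reducibility. First I would assume that we are not in case~\ref{thm:gp:1}, so that $G$ (hence also $N$, since $N$ is normal and $G$ permutes $N$-invariant subspaces) acts on~$V$ with no proper invariant subspace that is $G$-stable; in particular I would like to reduce to the situation where $N$ itself acts irreducibly. If $N$ does \emph{not} act irreducibly on $V$, I would take a minimal $N$-invariant subspace $W \subsetneq V$ and use the fact that $G$ permutes the $G$-translates $gW$. The transvection-generated structure forces each $gW$ to be a nonsingular symplectic subspace (a transvection $T_v[\lambda]$ with $v \notin W^\perp \cap W$ restricted to $W$ is again a transvection, and one controls the radical of the form on~$W$ using that the ambient form is nonsingular and that enough transvections move vectors in and out of $W$), and then one checks that $V$ decomposes as an orthogonal direct sum of the distinct translates of~$W$, landing in case~\ref{thm:gp:2}. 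The transitivity in case~\ref{thm:gp:2} is immediate since $G$ has no invariant subspace and hence acts transitively on the set of blocks, and they all have the common dimension $m = \dim W < n$.

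So the heart of the matter is the case where $N$ acts \emph{irreducibly} on $V$, and here I would aim to prove that $N$ is conjugate to $\Sp_n(L)$ for a subfield $L \subseteq K$, which is exactly case~\ref{thm:gp:3}. The key geometric input is the structure of the set $\Sigma$ of directions of transvections in $N$: because the product and conjugates of transvections behave well, $\Sigma \subseteq \PP(V)$ is closed under the operation ``if $[v],[w] \in \Sigma$ then the line through $T_v[\lambda](w)$ lies in $\Sigma$ for all suitable $\lambda$'', and one shows the $L$-span (for $L$ the subfield generated by the relevant ratios of parameters) of a basis of $\Sigma$-directions is an $L$-form $V_L \subseteq V$ on which the symplectic form takes values in $L$ after scaling. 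The classical Mitchell/McLaughlin-type argument then says that a group generated by transvections all of whose directions lie in $V_L$ and which acts irreducibly on $V$ must contain — in fact equal, by a generation statement for $\Sp$ by transvections — the full $\Sp(V_L) \cong \Sp_n(L)$. The hypothesis $\mathrm{char}\, K \ge 5$ enters to rule out the small-characteristic exceptional configurations (e.g.\ coincidences forcing $\Sigma$ to be too small, or the transvection group to stabilize a quadratic form and land in an orthogonal group), and to guarantee that the subgroup generated by two noncommuting transvections with non-perpendicular directions already contains a copy of $\SL_2$ of the right field, giving the field $L$ intrinsically.

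The main obstacle I anticipate is precisely the irreducible case: proving that the directions of transvections in $N$ are confined to an $L$-rational subspace and that the parameter ratios generate exactly the subfield $L$, rather than something larger or incoherent. Concretely, one must show the set $\Sigma$ of transvection directions, together with the field generated by cross-ratios of parameters along a common $\SL_2$, produces a \emph{single} well-defined $L$-structure compatible with the symplectic form; this requires a careful induction on dimension, repeatedly splitting off nonsingular $\SL_2$-planes $\langle v, w\rangle$ with $v \bullet w \ne 0$ and controlling how transvections with directions outside the current plane interact with it — this is the step where Mitchell's original two- and three-dimensional analysis has to be genuinely generalized. Once the $L$-form and the confinement of directions are in hand, the identification of $\langle\text{transvections of } G\rangle$ with $\Sp_n(L)$ follows from the standard fact that $\Sp$ of a symplectic space over any field is generated by its transvections, together with the irreducibility to pin down that no smaller transvection-generated subgroup can act irreducibly on all of $V$.
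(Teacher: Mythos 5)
Your high-level architecture matches the paper's: rule out case~\ref{thm:gp:1} by noting that $\langle \calL(G)\rangle_K$ is a $G$-stable subspace, then split according to whether the transvection subgroup acts irreducibly (case~\ref{thm:gp:3}, via an $L$-form of $V$ and generation of $\Sp_n(L)$ by transvections) or not (case~\ref{thm:gp:2}, via an orthogonal block decomposition permuted by $G$). However, what you submit is a plan with the central step explicitly deferred: you yourself name as ``the main obstacle'' the proof that the transvection directions and parameters of $G$ are governed by a \emph{single} coherent $L$-rational symplectic subspace, and you do not supply it. That step is not a routine invocation of ``the classical Mitchell/McLaughlin-type argument'' --- it is essentially the theorem itself, and it occupies the bulk of the paper: Dickson's classification of subgroups of $\PGL_2(\Fbar_\ell)$ is used to produce a first $(L,G)$-rational plane (after a conjugation in $\GSp(V)$ that your sketch glosses over --- the $L$-structure and the symplectic form must be aligned, which is why the paper conjugates $G$); Wagner's theorem on transvections in $3$-dimensional spaces is then needed to merge two $(L,G)$-linked rational planes into a larger rational subspace with the \emph{same} field $L$ and compatible scaling of direction vectors (the paper's Corollaries on rationalisability and rigidity); and an iteration produces a maximal rational subspace $I_L$ with $\calL(G)\subseteq I_K\cup I_K^{\perp}$, which is the statement that actually drives both the case-\ref{thm:gp:3} and case-\ref{thm:gp:2} conclusions. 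Without this confinement statement your dichotomy does not close: in the irreducible case you cannot identify the group, and in the reducible case you cannot show the blocks are mutually orthogonal nonsingular symplectic subspaces.

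A secondary, fixable gap is in your reducible branch: a minimal $N$-invariant subspace $W$ need not obviously be nonsingular (it could a priori be isotropic or meet its translates), and orthogonality of distinct translates $gW$ requires knowing that $\calL(N)\cap W$ spans $W$ and that every transvection direction lies in exactly one block --- precisely the role played in the paper by the property $\calL(G)\subseteq I_K\cup I_K^{\perp}$ together with its Lemma on decompositions. So the proposal is a correct road map but not a proof; the load-bearing lemmas (existence, merging and maximality of the $(L,G)$-rational structure) are missing.
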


\subsection*{Acknowledgements}

S.~A.-d.-R.\ worked on this article as a fellow of the Alexander-von-Humboldt foundation.
She thanks the Universit\'e du Luxembourg for its hospitality during a long term visit in 2011.
She was also partially supported by the project MTM2012-33830 of the Ministerio de Econom\'ia y Competitividad of Spain.
L.~V.~D. was supported by the project MTM2012-33830 of the Ministerio de Econom\'ia y Competitividad of Spain and by an ICREA Academia Research Prize.
G.~W.\ was partially supported by the DFG collaborative research centre TRR~45,
the DFG priority program~1489 and the Fonds National de la Recherche Luxembourg (INTER/DFG/12/10).
S.~A.-d.-R. and G.~W. thank the Centre de Recerca Matem\`{a}tica  for its support and hospitality during a long term visit in 2010.

The authors thank the anonymous referee of~\cite{partII} and Gunter Malle for suggesting
the alternative proof of Theorem~\ref{thm:gp} based Kantor's paper~\cite{Kantor1979}, which is
given in~\cite{partII}.

\section{Symplectic transvections in subgroups}

Recall that the full symplectic group is generated by all its transvections.
The main idea in this part is to identify the subgroups of the general symplectic group
containing a transvection by the centres of the transvections in the subgroup.

Let $K$ be a finite field of characteristic~$\ell$ and $V$ a symplectic $K$-vector space
of dimension~$n$. Let $G$ be a subgroup of~$\GSp(V)$.
A main difficulty in this part stems from the fact that $K$ need not be a prime field,
whence the set of direction vectors of the transvections contained in~$G$
need not be a $K$-vector space.
Suppose, for example, that we want to deal with the subgroup
$G=\Sp_{n}(L)$ of $\Sp_{n}(K)$ for $L$ a subfield of~$K$. Then the
directions of the transvections of~$G$ form the $L$-vector space $L^{n}$
contained in~$K^{n}$. It is this what we have in mind when we introduce the
term $(L,G)$-rational subspace below. In order to do so, we set up some more notation.

Write $\calL(G)$ for the set of $0 \neq v \in V$ such that $T_v[\lambda] \in G$ for some
$\lambda \in K$. More naturally, this set should be considered
as a subset of $\PP(V)$, the projective space consisting of the lines in~$V$.
We call it the {\em set of centres (or directions) of the symplectic transvections in~$G$}.
For a given nonzero vector $v \in V$, define the {\em parameter group of direction~$v$ in~$G$} as
$$\calP_v(G) := \{ \lambda \in K \;|\; T_v[\lambda] \in G \}.$$
The fact that $T_v(\mu) \circ T_v(\lambda) = T_v(\mu + \lambda)$ shows that $\calP_v(G)$
is a subgroup of the additive group of~$K$.
If $K$ is a finite field of characteristic~$\ell$, then $\calP_v(G)$ is a finite
direct product of copies of $\ZZ/\ell\ZZ$. Denote the number of factors by $\rk_v(G)$.
Because of $\calP_{\lambda v}(G) = \frac{1}{\lambda^2} \calP_v(G)$ for $\lambda \in K^\times$,
it only depends on the centre $U := \langle v \rangle_K \in \calL(G) \subseteq \PP(V)$,
and we call it the {\em rank of~$U$ in~$G$}, although we will not make use of this in our argument.

We find it useful to consider the surjective map
$$ \Phi: V \times K \xrightarrow{(v,\lambda) \mapsto T_v[\lambda]}
\{\text{symplectic transvections in } \Sp(V)\}.$$
The multiplicative group $K^\times$ acts on $V \times K$ via
$x (v,\lambda) := (xv, x^{-2} \lambda)$. Passing to the quotient modulo this action yields
a bijection
$$ (V\setminus\{0\} \times K)/K^\times \xrightarrow{(v,\lambda) \mapsto T_v[\lambda]}
\{\text{nontrivial symplectic transvections in } \Sp(V)\}.$$
When we consider the first projection $\pi_V: V \times K \twoheadrightarrow V$
modulo the action of $K^\times$ we obtain
$$ \pi_V: (V\setminus\{0\} \times K)/K^\times \twoheadrightarrow \PP(V),$$
which corresponds to sending a nontrivial transvection to its centre.
Let $W$ be a $K$-subspace of~$V$. Then $\Phi$ gives a bijection
$$ (W \setminus \{0\}\times K)/K^\times \xrightarrow{(v,\lambda) \mapsto T_v[\lambda]}
\{ \text{nontrivial symplectic transvections in } \Sp(V) \text{ with centre in } W\}.$$
Let $L$ be a subfield of~$K$.
We call an $L$-vector space $W_L \subseteq V$ {\em $L$-rational}
if $\dim_K W_K = \dim_L W_L$ with $W_K := \langle W_L \rangle_K$
and $\langle \cdot,\cdot \rangle$ restricted to $W_L \times W_L$ takes values in~$L$.
An $L$-vector space $W_L \subseteq V$ is called {\em $(L,G)$-rational}
if $W_L$ is $L$-rational and $\Phi$ induces a bijection
$$ (W_L\setminus \{0\} \times L)/L^\times \xrightarrow{(v,\lambda) \mapsto T_v[\lambda]}
 G \cap \{ \text{nontrivial sympl. transvections in } \Sp(V) \text{ with centre in } W_K\}.$$
Note that $(W_L \setminus\{0\} \times L)/L^\times$ is naturally a subset of
$(W_K  \setminus\{0\} \times K)/K^\times$.
A $K$-subspace $W \subseteq V$ is called {\em $(L,G)$-rationalisable} if there exists
an $(L,G)$-rational $W_L$ with $W_K = W$.
We speak of an $(L,G)$-rational symplectic subspace~$W_L$ if it is $(L,G)$-rational and
symplectic in the sense that the restricted pairing is non-degenerate on~$W_L$.
Let $H_L$ and $I_L$ be two $(L,G)$-rational symplectic subspaces of~$V$.
We say that {\em $H_L$ and $I_L$ are $(L,G)$-linked} if there is
$0 \neq h \in H_L$ and $0 \neq w \in I_L$ such that $h+w \in \calL(G)$.

\section{Strategy}

Now that we have set up all notation, we will describe the strategy behind the proof
of Theorem~\ref{thm:gp}, as a service for the reader.

If one is not in case~\ref{thm:gp:1}, then there are `many' transvections in~$G$, as otherwise
the $K$-span of $\calL(G)$ would be a proper subspace of~$V$ stabilised by~$G$.
The presence of `many' transvection is used first in order to show the
existence of a subfield $L \subseteq K$ and an $(L,G)$-rational symplectic
plane $H_L \subseteq V$.
For this it is necessary to replace $G$ by one of its conjugates inside $\GSp(V)$.
The main ingredient for the existence of $(L,G)$-rational symplectic planes,
which is treated in Section~\ref{sec:existenceLG}, is Dickson's classification of the finite
subgroups of $\PGL_2(\Fbar_\ell)$.

The next main step is to show that two $(L,G)$-linked symplectic spaces in~$V$
can be merged into a single one. This is the main result of Section~\ref{sec:merge}.
The main input is a result of Wagner for transvections in three dimensional vector spaces,
proved in Appendix~\ref{appendix:A}.

The merging results are applied to extend the $(L,G)$-rational symplectic plane further,
using again the existence of `many' transvections. We obtain a maximal $(L,G)$-rational
symplectic space $I_L \subseteq V$ in the sense that $\calL(G) \subset I_K \cup I_K^\perp$,
which is proved in Section~\ref{sec:extend}.
The proof of Theorem~\ref{thm:gp} can be deduced from this (see Section~\ref{sec:proof})
because either $I_K$ equals $V$, that is the huge image case, or translating $I_K$
by elements of~$G$ gives the decomposition in case~\ref{thm:gp:2}.

\section{Simple properties}

We use the notation from the Introduction.
In this subsection we list some simple lemmas illustrating and characterising the definitions
made above.

\begin{lem}\label{lem:rational-lines}
Let $v \in \calL(G)$. Then $\langle v \rangle_L$ is an $(L,G)$-rational line
if and only if $\calP_v(G) = L$.
\end{lem}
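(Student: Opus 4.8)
The statement is a direct unravelling of the definition of $(L,G)$-rational applied to a line, so the plan is essentially bookkeeping about the bijection induced by $\Phi$. Let me write $v \in \calL(G)$, so that $T_v[\lambda] \in G$ for some $\lambda \in K^\times$, and set $W_L := \langle v \rangle_L$, $W_K := \langle v \rangle_K$.

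First I would treat the $L$-rationality condition itself. Since $W_L$ is one-dimensional over $L$ and $W_K$ is one-dimensional over $K$, the equality $\dim_K W_K = \dim_L W_L$ holds automatically. The pairing restricted to $W_L \times W_L$ is identically zero (the form is alternating, so $\langle v, v\rangle = 0$), hence it trivially takes values in $L$. So $\langle v\rangle_L$ is always $L$-rational, and the content of the lemma lies entirely in the bijectivity clause.

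Next, the key computation: describe both sides of the purported bijection
$$ (W_L \setminus\{0\} \times L)/L^\times \longrightarrow G \cap \{\text{nontrivial sympl.\ transvections with centre in } W_K\}.$$
On the right, every nontrivial symplectic transvection with centre in $W_K = \langle v\rangle_K$ has the form $T_v[\mu]$ for a unique $\mu \in K^\times$ (using the bijection $(V\setminus\{0\}\times K)/K^\times \to \{\text{nontrivial sympl.\ transvections}\}$ recalled in Section~2, restricted to centres in $W_K$, together with $T_{xv}[x^{-2}\mu] = T_v[\mu]$); such an element lies in $G$ precisely when $\mu \in \calP_v(G)$. So the right-hand side is in canonical bijection with $\calP_v(G)\setminus\{0\}$, via $T_v[\mu] \leftrightarrow \mu$. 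On the left, using the $K^\times$-action $x(w,\lambda)=(xv',x^{-2}\lambda)$ restricted to $L^\times$ acting on $W_L\setminus\{0\}\times L$, every orbit has a representative of the form $(v,\mu)$ with $\mu \in L$ (scale the first coordinate into $\langle v\rangle_L$ to be $v$ itself, using $L^\times$), and $(v,\mu)$, $(v,\mu')$ are in the same $L^\times$-orbit iff $\mu = \mu'$; so the left-hand side is in canonical bijection with $L$, via $[(v,\mu)] \leftrightarrow \mu$, and the map $\Phi$ sends $[(v,\mu)]$ to $T_v[\mu]$. Chasing through, the map in the definition becomes, under these identifications, the inclusion $L \hookrightarrow K$ with image landing in $\calP_v(G)\setminus\{0\}$ — wait, more precisely it is the map $L \to \calP_v(G)\setminus\{0\}$, $\mu \mapsto \mu$, which is defined (has image in $\calP_v(G)$) and injective for any $L \subseteq \calP_v(G)$, and is a bijection onto $\calP_v(G)\setminus\{0\}$ exactly when $L = \calP_v(G)$. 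Hence $\Phi$ induces a bijection if and only if $\calP_v(G) = L$, which is the claim.

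I do not anticipate a real obstacle here; the only mild subtlety is being careful that the left-hand set $(W_L\setminus\{0\}\times L)/L^\times$ really is what it looks like — in particular that passing to $L^\times$-orbits does not identify $(v,\mu)$ with $(v,\mu')$ for $\mu\neq\mu'$, which follows because fixing the first coordinate to be $v$ forces the scaling factor $x\in L^\times$ to be $1$. One should also note in passing that the definition tacitly requires $\Phi$ to map $(W_L\setminus\{0\}\times L)/L^\times$ \emph{into} the right-hand side (i.e.\ that $T_v[\mu]\in G$ for all $\mu\in L$), which is the containment $L \subseteq \calP_v(G)$; the surjectivity half then upgrades this to equality. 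So the proof is: $L$-rationality is automatic; identify both sides with subsets of $K$ as above; conclude the induced map is a bijection iff $L = \calP_v(G)$.
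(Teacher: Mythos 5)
Your proof is correct and is essentially the paper's own argument written out in full: the paper disposes of the lemma in one sentence by noting that every transvection with centre $\langle v\rangle_K$ is \emph{uniquely} of the form $T_v[\lambda]$, which is exactly the identification of both sides of the bijection with $\calP_v(G)$ and $L$ that you carry out explicitly. The only wrinkle (the class of $(v,0)$ mapping to the trivial transvection) is a convention issue already present in the paper's definition and does not affect the equivalence.
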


\begin{proof}
This follows immediately from that fact that all transvections with centre
$\langle v \rangle_K$ can be written uniquely as $T_v[\lambda]$ for some $\lambda \in K$.
\end{proof}

\begin{lem}\label{lem:rational-subs}
Let $W_L \subseteq V$ be an $(L,G)$-rational space and $U_L$ an $L$-vector subspace of~$W_L$.
Then $U_L$ is also $(L,G)$-rational.
\end{lem}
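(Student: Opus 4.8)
The plan is to unwind the definitions. Let $U_L \subseteq W_L$ be an $L$-subspace, and set $U_K := \langle U_L \rangle_K$. First I would check $L$-rationality of $U_L$: since $W_L$ is $L$-rational we have $\dim_K W_K = \dim_L W_L$, which forces an $L$-basis of $W_L$ to remain $K$-linearly independent; any $L$-basis of $U_L$ extends to an $L$-basis of $W_L$, hence is $K$-linearly independent, giving $\dim_K U_K = \dim_L U_L$. The pairing restricted to $U_L \times U_L$ takes values in $L$ because it already does so on the larger $W_L \times W_L$. So $U_L$ is $L$-rational.

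The substantive point is that $\Phi$ induces the required bijection
$$ (U_L\setminus \{0\} \times L)/L^\times \longrightarrow G \cap \{\text{nontriv.\ sympl.\ transvections with centre in } U_K\}.$$
Injectivity is immediate: the map $(U_L\setminus\{0\}\times L)/L^\times \to (W_L\setminus\{0\}\times L)/L^\times$ is itself injective (it is the restriction of the canonical inclusion on projectivised spaces), and $\Phi$ is injective on $(W_L\setminus\{0\}\times L)/L^\times$ by the $(L,G)$-rationality of $W_L$. For surjectivity, take a nontrivial symplectic transvection $T_v[\mu] \in G$ with centre $\langle v\rangle_K \subseteq U_K \subseteq W_K$. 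Since $W_L$ is $(L,G)$-rational, $T_v[\mu]$ lies in the image of $(W_L\setminus\{0\}\times L)/L^\times$, i.e.\ there exist $w \in W_L\setminus\{0\}$ and $\lambda \in L$ with $T_v[\mu] = T_w[\lambda]$; in particular $\langle w\rangle_K = \langle v\rangle_K \subseteq U_K$. It remains to see $w \in U_L$: we have $w = c v'$ for some $c \in K^\times$ and some $v' \in U_L$ (pick any nonzero element of the line $\langle v\rangle_K \cap U_L$, which is nonzero since that line lies in $U_K$ and $U_L$ is $L$-rational of the right dimension), and then $T_w[\lambda] = T_{v'}[c^2\lambda]$; $L$-rationality of $U_L$, together with the fact that $\langle w \rangle_K \cap U_L$ is an $L$-line, shows $c \in L^\times$, so $c^2\lambda \in L$ and the transvection is indeed $\Phi$ of an element of $(U_L\setminus\{0\}\times L)/L^\times$.

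The only mildly delicate point — and the step I expect to require the most care — is the bookkeeping showing that a vector $w \in W_L$ spanning (over $K$) a line contained in $U_K$ must actually lie in $U_L$ up to an $L$-scalar; this is a clean consequence of the $L$-rationality (equality of dimensions) of $U_L$ inside $W_L$, via the observation that $W_L \cap U_K = U_L$. Everything else is a direct translation through the definitions, so the lemma follows.
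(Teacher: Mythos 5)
Your proof is correct and follows essentially the same route as the paper's: establish $L$-rationality of $U_L$ by extending an $L$-basis, observe $W_L \cap U_K = U_L$, and use the $(L,G)$-rationality of $W_L$ to rewrite a given transvection as $T_w[\lambda]$ with $w \in W_L$ and $\lambda \in L$, whence $w \in U_L$. One small remark: the detour through the scalar $c$ is unnecessary, and your parenthetical justification that the line $\langle v\rangle_K \cap U_L$ is nonzero ``since that line lies in $U_K$'' would be false for a general $K$-line in $U_K$ --- the correct reason, which your closing paragraph does supply, is that $w \in W_L$ and $\langle w\rangle_K \subseteq U_K$ force $w \in W_L \cap U_K = U_L$ directly.
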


\begin{proof}
We first give two general statements about $L$-rational subspaces. Let $u_1,\dots,u_d$
be an $L$-basis of~$U_L$ and extend it by $w_1,\dots,w_e$ to an $L$-basis of~$W_L$.
As $W_L$ is $L$-rational, the chosen vectors remain linearly independent over~$K$,
and, hence, $U_L$ is $L$-rational. Moreover, we see, e.g.\ by writing
down elements in the chosen basis, that $W_L \cap U_K = U_L$.

It is clear that $\Phi$ sends elements in $(U_L \times L)/L^\times$
to symplectic transvections in~$G$ with centres in~$U_K$.
Conversely, let $T_v[\lambda]$ be such a transvection. As $W_L$ is $(L,G)$-rational,
$T_v[\lambda] = T_u[\mu]$ with some $u \in W_L$ and $\mu \in L$. Due to
$W_L \cap U_K = U_L$, we have $u \in U_L$ and the tuple $(u,\mu)$ lies in $U_L \times L$.
\end{proof}

\begin{lem}\label{lem:definition}
Let $W_L \subseteq V$ be an $L$-rational subspace of~$V$. Then the following assertions are equivalent:
\begin{enumerate}[(i)]
\item\label{lem:definition:i} $W_L$ is $(L,G)$-rational.
\item\label{lem:definition:ii}
\begin{enumerate}[(a)]
\item\label{lem:definition:ii:a} $T_{W_L}[L] := \{ T_v[\lambda]\;|\; \lambda \in L, \; v \in W_L \} \subseteq G$ and
\item\label{lem:definition:ii:b} for each $U \in \calL(G) \subseteq \PP(V)$ with $U \subseteq W_K$ there is a
$u\in U \cap W_L$ such that $\calP_u(G) = L$
(i.e.\ $\langle u \rangle_L$ is an $(L,G)$-rational line contained in~$U$ by Lemma~\ref{lem:rational-lines}).
\end{enumerate}
\end{enumerate}
\end{lem}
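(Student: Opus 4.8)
The plan is to prove the two implications separately, observing that the content is essentially an unravelling of the definition of $(L,G)$-rationality into the two more hands-on conditions (a) and (b). Throughout I work with the map $\Phi$ and the bijections recorded in Section~2: the restriction of $\Phi$ to $(W_L \setminus \{0\} \times L)/L^\times$ is automatically injective because $W_L$ is $L$-rational (two transvections $T_v[\lambda] = T_u[\mu]$ with $v,u \in W_L$ nonzero force $u = xv$ and $\mu = x^{-2}\lambda$ for some $x \in K^\times$, and $L$-rationality of $W_L$ — more precisely the fact that $W_L \cap W_K' = W_L$ for lines — pins down $x \in L^\times$). Hence the only issue is surjectivity of $\Phi$ onto $G \cap \{\text{nontrivial sympl.\ transvections with centre in } W_K\}$, and $(L,G)$-rationality is exactly the statement that this surjectivity holds.

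For \ref{lem:definition:i}$\Rightarrow$\ref{lem:definition:ii}, assume $W_L$ is $(L,G)$-rational. Condition (a) is immediate: for $v \in W_L$ and $\lambda \in L$ the tuple $(v,\lambda)$ lies in $(W_L \times L)/L^\times$, so $\Phi(v,\lambda) = T_v[\lambda]$ lies in $G$ by the surjectivity-onto-$G$ half of the bijection (together with the trivial-transvection case $\lambda = 0$). For (b), take $U \in \calL(G)$ with $U \subseteq W_K$ and pick $0 \neq w \in U$ with $T_w[\mu] \in G$ for some $0 \neq \mu \in K$; this is a nontrivial symplectic transvection in $G$ with centre in $W_K$, so by $(L,G)$-rationality it equals $T_u[\nu]$ for some $u \in W_L \setminus \{0\}$, $\nu \in L$. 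Then $\langle u \rangle_K = U$, so $u \in U \cap W_L$, and I claim $\calP_u(G) = L$: the inclusion $L \subseteq \calP_u(G)$ follows from (a), and the reverse inclusion follows because any $T_u[\kappa] \in G$ is a transvection with centre $\langle u \rangle_K \subseteq W_K$, hence of the form $T_{u'}[\kappa']$ with $u' \in W_L$, $\kappa' \in L$; uniqueness of the $T_v[\lambda]$-representation for the line $\langle u \rangle_K$ then forces $u' = xu$ with $x \in L^\times$ (using $L$-rationality) and $\kappa = x^{-2}\kappa' \in L$.

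For \ref{lem:definition:ii}$\Rightarrow$\ref{lem:definition:i}, assume (a) and (b). I must show $\Phi$ restricted to $(W_L \setminus\{0\} \times L)/L^\times$ maps \emph{onto} $G \cap \{\text{nontrivial sympl.\ transvections with centre in } W_K\}$; injectivity and the fact that the image lands inside this set are as in the first paragraph, using (a) for the latter. So let $T_v[\lambda] \in G$ be a nontrivial symplectic transvection with centre $U := \langle v \rangle_K \subseteq W_K$. Then $U \in \calL(G)$, so by (b) there is $u \in U \cap W_L$ with $\calP_u(G) = L$. Write $v = xu$ with $x \in K^\times$ (possible since $u,v$ span the same line), so $T_v[\lambda] = T_u[x^2\lambda]$, and this lies in $G$, meaning $x^2\lambda \in \calP_u(G) = L$. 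Setting $\mu := x^2 \lambda \in L$ we get $T_v[\lambda] = T_u[\mu] = \Phi(u,\mu)$ with $(u,\mu) \in W_L \setminus\{0\} \times L$, as required. I do not foresee a genuine obstacle here; the only point requiring care is the injectivity/well-definedness bookkeeping for $\Phi$ on $(W_L \setminus\{0\}\times L)/L^\times$, which is precisely where $L$-rationality of $W_L$ (as opposed to mere containment of values in $L$) is used, so I would state that reduction cleanly at the outset and then treat both implications as routine.
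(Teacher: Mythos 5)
Your proposal is correct and follows essentially the same route as the paper: (a) is read off the definition, (b) comes from transporting a transvection with centre $U$ to a representative $(u,\nu)\in W_L\times L$ and checking $\calP_u(G)=L$, and the converse reduces surjectivity of $\Phi$ to condition (b) line by line. The only cosmetic difference is that you verify $\calP_u(G)=L$ and the injectivity of $\Phi$ on $(W_L\setminus\{0\}\times L)/L^\times$ by hand, where the paper delegates these points to Lemmas~\ref{lem:rational-lines} and~\ref{lem:rational-subs}.
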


\begin{proof}
'(\ref{lem:definition:i}) $\Rightarrow$ (\ref{lem:definition:ii}):'
Note that (\ref{lem:definition:ii:a}) is clear. For (\ref{lem:definition:ii:b}),
let $U \in \calL(G)$ with $U \subseteq W_K$.
Hence, there is $u \in U$ and $\lambda \in K^\times$ with $T_u[\lambda] \in G$.
As $W_L$ is $(L,G)$-rational, we may assume that $u \in W_L$ and $\lambda \in L$.
Lemma~\ref{lem:rational-subs} implies that $\langle u \rangle_L$ is
an $(L,G)$-rational line.

'(\ref{lem:definition:ii}) $\Rightarrow$ (\ref{lem:definition:i}):'
Denote by $\iota$ the injection
$(W_L \setminus\{0\} \times L)/L^\times \hookrightarrow (W_K \setminus\{0\} \times K)/K^\times$.
By (\ref{lem:definition:ii:a}), the image of $\Phi \circ \iota$ lies in~$G$. It remains to prove
the surjectivity of this map onto the symplectic transvections of~$G$ with centres in~$W_K$.
Let $T_v[\lambda]$ be one such. Take $U = \langle v \rangle_K$.
By (\ref{lem:definition:ii:b}), there is $v_0 \in U$ such that $U_L = \langle v_0 \rangle_L \subseteq W_L$
is an $(L,G)$-rational line. In particular, $T_v[\lambda] = T_{v_0}[\mu]$
with some $\mu \in L$, finishing the proof.
\end{proof}

\begin{lem}\label{lem:basechange}
Let $A \in \GSp(V)$ with multiplier~$\alpha \in K^\times$.
Then $A T_v[\lambda] A^{-1} = T_{Av}[\frac{\lambda}{\alpha}]$.
In particular, the notion of $(L,G)$-rationality is not stable under conjugation.
\end{lem}

\begin{proof}
For all $w\in V$, $AT_v[\lambda]A^{-1}(w)=A(A^{-1}w + \lambda(A^{-1}w \bullet v) v)=w + \lambda (
A^{-1}w\bullet v) Av$. Since $A$ has multiplier $\alpha$, $w\bullet Av=\alpha(A^{-1}w\bullet v)$, hence
$AT_v[\lambda]A^{-1}(w)=w + \frac{\lambda}{\alpha} (w\bullet Av) Av=T_{Av}[\frac{\lambda}{\alpha}](w)$.
\end{proof}

\begin{lem}\label{lem:fix}
The group $G$ maps $\calL(G)$ into itself.
\end{lem}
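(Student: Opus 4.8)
The plan is to show that for $g \in G$ and $v \in \calL(G)$, we have $g(v) \in \calL(G)$; since $\calL(G)$ is naturally a subset of $\PP(V)$, this amounts to showing that the line $\langle g(v) \rangle_K$ is the centre of some symplectic transvection belonging to~$G$. First I would pick $\lambda \in K^\times$ with $T_v[\lambda] \in G$, which exists by definition of $\calL(G)$. Since $G$ is a subgroup of $\GSp(V)$, the conjugate $g\, T_v[\lambda]\, g^{-1}$ again lies in~$G$.

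The key computational input is Lemma~\ref{lem:basechange}: writing $\alpha = m(g) \in K^\times$ for the multiplier of~$g$, we get $g\, T_v[\lambda]\, g^{-1} = T_{g(v)}[\lambda/\alpha]$. This is again a symplectic transvection (its multiplier is~$1$, as it is a conjugate of an element of $\Sp(V)$ by an element of $\GSp(V)$, or simply because the formula exhibits it in the standard form $T_w[\mu]$), and it is nontrivial because $\lambda \neq 0$ and $g(v) \neq 0$. Hence $T_{g(v)}[\lambda/\alpha] \in G$, which by definition means $g(v) \in \calL(G)$.

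There is really no obstacle here; the only thing to be slightly careful about is the passage to $\PP(V)$, i.e.\ that the statement is about lines rather than vectors, but the formula from Lemma~\ref{lem:basechange} already produces a transvection whose centre is exactly the line $\langle g(v) \rangle_K$, so the set-level statement and the projective statement coincide. I would therefore simply record the one-line argument.
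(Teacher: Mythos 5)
Your proof is correct and is essentially identical to the paper's: both conjugate a transvection $T_v[\lambda]\in G$ by $g$ and invoke Lemma~\ref{lem:basechange} to recognise $g\,T_v[\lambda]\,g^{-1}=T_{g(v)}[\lambda/\alpha]$ as a transvection of $G$ with centre $\langle g(v)\rangle_K$. The extra remarks about nontriviality and the passage to $\PP(V)$ are fine but not needed beyond what the paper records.
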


\begin{proof}
Let $g \in G$ and $w \in \calL(G)$, say $T_w[\lambda] \in G$. Then
by Lemma~\ref{lem:basechange} we have
$g T_w[\lambda] g^{-1} = T_{gw}[\frac{\lambda}{\alpha}]$, where
$\alpha$ is the multiplier of~$g$. Hence, $g(w) \in \calL(G)$.
\end{proof}

The following lemma shows that the natural projection yields a bijection between
transvections in the symplectic group and their images in the projective symplectic group.

\begin{lem}\label{lem:auxiliar}
Let $V$ be a symplectic $K$-vector space, $0 \neq u_1,u_2 \in V$.
If $T_{u_1}[\lambda_1]^{-1}T_{u_2}[\lambda_2]\in \{a\cdot \mathrm{Id}: a\in K^\times\}$,
then $T_{u_1}[\lambda_1]=T_{u_2}[\lambda_2]$.
\end{lem}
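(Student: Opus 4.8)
The plan is to analyze the hypothesis $T_{u_1}[\lambda_1]^{-1}T_{u_2}[\lambda_2] = a\cdot\mathrm{Id}$ directly by unwinding both transvections as linear maps. Writing $T = a\cdot\mathrm{Id}$ would mean $T_{u_2}[\lambda_2] = a\, T_{u_1}[\lambda_1]$. First I would observe that $\det T_{u_i}[\lambda_i] = 1$ for any transvection (stated in the Introduction), so taking determinants of $T_{u_2}[\lambda_2] = a\, T_{u_1}[\lambda_1]$ gives $1 = a^n$, which alone does not force $a = 1$ when $\gcd(n,\ell-1) > 1$; hence determinants are not quite enough and a finer argument is needed.

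The cleaner route is to use the rank-$1$ structure. Recall $T_{u_i}[\lambda_i] - \mathrm{Id}$ has rank $1$ (it sends $w \mapsto \lambda_i\langle w, u_i\rangle u_i$), assuming $u_i \notin \mathrm{rad}$ and $\lambda_i \ne 0$; if some $\lambda_i = 0$ then that transvection is the identity and the statement is immediate, so I would first dispose of that trivial case. With both transvections nontrivial, the equation $T_{u_2}[\lambda_2] = a\, T_{u_1}[\lambda_1]$ rewritten as an identity of operators reads
\[
w + \lambda_2\langle w,u_2\rangle u_2 = a\bigl(w + \lambda_1\langle w,u_1\rangle u_1\bigr)\quad\text{for all }w\in V,
\]
i.e. $(1-a)w = a\lambda_1\langle w,u_1\rangle u_1 - \lambda_2\langle w,u_2\rangle u_2$ for all $w$. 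The right-hand side always lies in the (at most $2$-dimensional) span of $u_1,u_2$, while the left-hand side ranges over all of $V$ unless $a = 1$. Since $\dim V = n \ge 3$ in the cases of interest — and in any case $n \ge 2$ with the symplectic form nondegenerate forces $V \ne \langle u_1,u_2\rangle_K$ once we pick $w$ outside that span — we conclude $a = 1$.

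Once $a = 1$, the displayed identity becomes $\lambda_1\langle w,u_1\rangle u_1 = \lambda_2\langle w,u_2\rangle u_2$ for all $w$. Choosing $w$ with $\langle w,u_1\rangle \ne 0$ (possible by nondegeneracy) shows $u_2 \in \langle u_1\rangle_K$, say $u_2 = c\,u_1$ with $c \in K^\times$; substituting back and using $\langle\cdot,\cdot\rangle$ bilinear gives $\lambda_1 = c^2\lambda_2$, which is exactly the relation $T_{c u_1}[\lambda_2] = T_{u_1}[c^2\lambda_2] = T_{u_1}[\lambda_1]$, so $T_{u_1}[\lambda_1] = T_{u_2}[\lambda_2]$ as claimed. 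The main (minor) obstacle is bookkeeping the degenerate sub-cases — $\lambda_i = 0$, and ensuring the span of $u_1, u_2$ is a proper subspace so that a suitable $w$ exists — but none of these is serious; the crux is simply the rank-$1$ versus full-rank dimension count that forces $a = 1$.
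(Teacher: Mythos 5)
Your overall approach is the same as the paper's: rewrite the hypothesis as the operator identity $T_{u_2}[\lambda_2] = a\,T_{u_1}[\lambda_1]$, expand both sides, and force $a=1$. However, your argument for $a=1$ has a gap in exactly the case the paper needs. You deduce $a=1$ by choosing $w$ outside $\langle u_1,u_2\rangle_K$, asserting that nondegeneracy ``forces $V\ne\langle u_1,u_2\rangle_K$''. That is false when $\dim V=2$ and $u_1,u_2$ are linearly independent: then $\langle u_1,u_2\rangle_K=V$ and no such $w$ exists, so the rank comparison gives nothing. The lemma is stated for an arbitrary symplectic space and is invoked in the proof of Lemma~\ref{lem:1911}, where $V$ is $2$-dimensional; so this is not a degenerate case that can be waved away as outside ``the cases of interest''.

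The repair is short and is essentially what the paper does: split on whether $u_1,u_2$ are linearly dependent. If they are independent, evaluate your identity $(1-a)w = a\lambda_1\langle w,u_1\rangle u_1 - \lambda_2\langle w,u_2\rangle u_2$ at $w=u_1$; since $\langle u_1,u_1\rangle=0$ this reads $(1-a)u_1 = -\lambda_2\langle u_1,u_2\rangle u_2$, and linear independence of $u_1,u_2$ forces $1-a=0$, with no hypothesis on $\dim V$. If instead $u_2=bu_1$, the right-hand side lies in the line $\langle u_1\rangle_K$, and any $w\notin\langle u_1\rangle_K$ (which exists since $\dim V\ge 2$) yields $a=1$. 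Once $a=1$, the conclusion $T_{u_1}[\lambda_1]=T_{u_2}[\lambda_2]$ is immediate from the hypothesis, so your final paragraph (recovering $u_2=cu_1$ and $\lambda_1=c^2\lambda_2$) is correct but redundant. The determinant digression and the reduction to $\lambda_1,\lambda_2\neq 0$ are fine.
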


\begin{proof}
Assume $T_{u_1}[\lambda_1]^{-1}T_{u_2}[\lambda_2]=a\mathrm{Id}$. Then for all $v\in V$, $T_{u_2}[\lambda_2](v)-T_{u_1}[\lambda_1](av)=0$. In particular, taking $v=u_1$,
$T_{u_2}[\lambda_2](u_1)-T_{u_1}[\lambda_1](au_1)=u_1 + \lambda_2(u_1\bullet u_2)u_2-au_1=0$,
hence either $u_1$ and $u_2$ are linearly dependent or $a=1$ (thus both transvections coincide).
Assume then that $u_2=bu_1$ for some $b\in K^\times$.
Then for all $v\in V$ we have
$T_{bu_1}[\lambda_2](v)-T_{u_1}[\lambda_1](av)= v + \lambda_2 b^2 (v \bullet u_1) u_1 - av - \lambda_1 a(v\bullet u_1)u_1=(a-1)v + (\lambda_2 b^2 - a\lambda_1)(v\bullet u_1)u_1=0$.
Choosing $v$ linearly independent from $u_1$, we obtain $a=1$, as we wished to prove.
\end{proof}

\section{Existence of $(L,G)$-rational symplectic planes}\label{sec:existenceLG}

Let, as before, $K$ be a finite field of characteristic~$\ell$, $V$ a $n$-dimensional
symplectic $K$-vector space and $G \subseteq \GSp(V)$ a subgroup.
We will now prove the existence of $(L,G)$-rational symplectic planes if there are two
transvections in~$G$ with nonorthogonal directions.

Note that any additive subgroup $H \subseteq K$ can appear as a parameter group
of a direction. Just take $G$ to be the subgroup of $\GSp(V)$ generated by the transvections in
one fixed direction with parameters in~$H$.
It might seem surprising that the existence of two nonorthogonal centres forces
the parameter group to be the additive group of a subfield $L$ of~$K$
(up to multiplication by a fixed scalar).
This is the contents of Proposition~\ref{prop:mitchell},
which is one of the main ingredients for this article.
This proposition, in turn, is based on Proposition~\ref{prop:1911},
going back to Mitchell (cf.\ \cite{Mitchell1911}).
To make this exposition self-contained we also include a proof of it,
which essentially relies on Dickson's classification of the finite subgroups
of~$\PGL_2(\Fbar_\ell)$.
Recall that an {\em elation} is the image in $\PGL(V)$ of a transvection in $\GL(V)$.

\begin{prop}\label{prop:1911}
Let $V$ be a 2-dimensional $K$-vector space with basis $\{e_1, e_2\}$
and $\Gamma \subseteq \PGL(V)$ a subgroup that
contains two nontrivial elations whose centers $U_1$ and $U_2$ are different.
Let $\ell^m$ be the order of an $\ell$-Sylow subgroup of~$\Gamma$.

Then $K$ contains a subfield~$L$ with $\ell^m$ elements.
Moreover, there exists $A\in \PGL_2(K)$ such that
$AU_1=\langle e_1\rangle_K$, $AU_2=\langle e_2\rangle_K$,
and $A\Gamma A^{-1}$ is either $\PGL(V_L)$ or $\PSL(V_L)$,
where $V_L=\langle e_1, e_2\rangle_L$.
\end{prop}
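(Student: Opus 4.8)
The plan is to use the hypothesis on the two distinct elation centres to first normalise coordinates, and then to invoke Dickson's classification of finite subgroups of $\PGL_2(\Fbar_\ell)$ together with a Sylow argument. First I would choose $A\in\PGL_2(K)$ carrying $U_1$ to $\langle e_1\rangle_K$ and $U_2$ to $\langle e_2\rangle_K$; replacing $\Gamma$ by $A\Gamma A^{-1}$ we may assume the two elations are $\bar\tau_1\colon e_1\mapsto e_1,\ e_2\mapsto e_2+\mu_1 e_1$ and $\bar\tau_2\colon e_1\mapsto e_1+\mu_2 e_2,\ e_2\mapsto e_2$ for some $\mu_1,\mu_2\in K^\times$. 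Then I would run through Dickson's list of the possibilities for a finite subgroup of $\PGL_2(\Fbar_\ell)$ (cyclic, dihedral, $A_4$, $S_4$, $A_5$, a group with a normal $\ell$-subgroup whose quotient is cyclic, or $\PSL_2(\FF_{\ell^r})$ / $\PGL_2(\FF_{\ell^r})$), and eliminate all but the last using that $\Gamma$ contains a nontrivial elation, i.e.\ a nontrivial unipotent element. This already kills the groups of order prime to $\ell$ (cyclic of order prime to $\ell$, dihedral with $\ell\nmid|\cdot|$, $A_4,S_4,A_5$ when $\ell\ge 5$ — here is exactly where the hypothesis $\ell\ge 5$ enters, to exclude $A_5$ etc.\ from having unipotents).

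The crux is to rule out the ``Borel-type'' case, namely that $\Gamma$ lies in a single Borel subgroup (equivalently, that $\Gamma$ has a unique fixed point, equivalently that all its elations share one centre). Here I would use the second elation: $\bar\tau_1$ fixes only the point $\langle e_1\rangle$ and $\bar\tau_2$ fixes only $\langle e_2\rangle$, so $\Gamma$ cannot be contained in any Borel, and in particular cannot be of the ``cyclic-by-$\ell$-group'' type on Dickson's list (those are subgroups of a Borel). After this elimination the only surviving possibility is $\PSL_2(L')\le \Gamma\le\PGL_2(L')$ for some finite subfield $L'\subseteq\Fbar_\ell$, and since $\Gamma$ is by hypothesis a subgroup of $\PGL_2(K)$, standard conjugacy of subfield subgroups lets me arrange (after a further conjugation, which I absorb into $A$, being careful that this conjugation can be chosen to still send $U_1,U_2$ to $\langle e_1\rangle,\langle e_2\rangle$ — a point I'd need to check, using that $\PGL_2(L')$ acts $2$-transitively so I can post-compose with an element of $\PGL_2(L')$) that $L'=:L\subseteq K$ and $\Gamma$ is $\PSL(V_L)$ or $\PGL(V_L)$ with $V_L=\langle e_1,e_2\rangle_L$.

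Finally, to pin down $|L|=\ell^m$: an $\ell$-Sylow of $\PGL_2(L)$ (and of $\PSL_2(L)$, since $\ell$ is odd so the index $[\PGL_2:\PSL_2]=2$ is prime to $\ell$) is the group of upper unitriangular matrices, of order $|L|$; hence the $\ell$-Sylow of $\Gamma$ has order $|L|$, so $|L|=\ell^m$ and $L$ is the subfield of $K$ with $\ell^m$ elements (unique, if it exists — and it exists because $\Gamma\subseteq\PGL_2(K)$ forces $L\subseteq K$). I expect the main obstacle to be the bookkeeping in the conjugation step: ensuring that after replacing $\Gamma$ by the subfield-group $\PGL(V_L)$ or $\PSL(V_L)$ one can still simultaneously realise $U_1\mapsto\langle e_1\rangle_K$ and $U_2\mapsto\langle e_2\rangle_K$; this requires choosing the conjugating element inside the normaliser of $\PGL_2(L)$ and using the $2$-transitivity of its action on $\PP^1$. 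The rest is either Dickson's theorem (cited) or routine order computations.
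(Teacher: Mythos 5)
Your proposal is correct and follows essentially the same route as the paper: both reduce to Dickson's classification of finite subgroups of $\PGL_2(\Fbar_\ell)$, use the two elations with distinct centres to eliminate every case except $\PSL_2(L)\le\Gamma\le\PGL_2(L)$, and then exploit the $2$-transitivity of $\PSL(V_L)$ on $\PP^1(L)$ to adjust the conjugating element so that $U_1,U_2$ land on $\langle e_1\rangle_K$ and $\langle e_2\rangle_K$. You merely spell out the case elimination and the Sylow-order computation that the paper leaves inside its citation of Dickson; the only nitpick is that for $\ell=5$ the group $A_5\cong\PSL_2(\FF_5)$ does contain unipotent elements, but it is then already an instance of the surviving subfield-subgroup case, so nothing is lost.
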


\begin{proof}
Since there are two elations $\tau_1$ and $\tau_2$ with independent directions $U_1$ and~$U_2$,
Dickson's classification of subgroups of $\PGL_2(\Fbar_\ell)$ (Section 260 of~\cite{Dickson})
implies that there is $B\in \PGL_2(K)$ such that $B\Gamma B^{-1}$ is either $\PGL(V_L)$ or $\PSL(V_L)$,
where $L$ is a subfield of~$K$ with~$\ell^m$ elements. By Lemma~\ref{lem:basechange}, the direction of
$B\tau_i B^{-1}$ is $BU_i$ for~$i=1,2$ and the lines $BU_i$ are of the form $\langle d_i \rangle_K$
with $d_i \in V_L$ for $i=1,2$.
As $\PSL(V_L)$ acts transitively on~$V_L$, there is $C \in \PSL(V_L)$
such that $CU_1=\langle e_1 \rangle_K$ and $CU_2=\langle e_2 \rangle_K$.
Setting $A := CB$ yields the proposition.
\end{proof}

Although the preceding proposition is quite simple, the very important consequence it has
is that the conjugated elations $A\tau_iA^{-1}$ both have direction vectors that can be defined
over the same $L$-rational plane.

\begin{lem}\label{lem:1911}
Let $V$ be a $2$-dimensional $K$-vector space, $G \subseteq \GL(V)$ containing two
transvections with linearly independent directions $U_1$ and~$U_2$.
Let $\ell^m$ be the order of any $\ell$-Sylow subgroup of~$G$.

Then $K$ contains a subfield $L$ with $\ell^m$ elements and there are
$A \in \GL(V)$ and an $(L,AGA^{-1})$-rational plane $V_L \subseteq V$.
Moreover, $A$ can be chosen such that $AU_i = U_i$ for $i=1,2$. Furthermore,
if $u_1 \in U_1$ and $u_2 \in U_2$ are such that $u_1 \bullet u_2 \in L^\times$,
then $V_L$ can be chosen to be $\langle u_1,u_2\rangle_L$.
\end{lem}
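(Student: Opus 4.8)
The plan is to bootstrap Lemma~\ref{lem:1911} from Proposition~\ref{prop:1911} by translating the projective statement back into $\GL(V)$ and then verifying the rationality conditions via Lemma~\ref{lem:definition}. First I would pass to $\PGL(V)$: let $\Gamma$ be the image of $G$, and let $\tau_1,\tau_2 \in G$ be transvections with directions $U_1, U_2$, so their images $\bar\tau_1,\bar\tau_2$ are nontrivial elations with distinct centres $U_1,U_2$. Proposition~\ref{prop:1911} then furnishes a subfield $L \subseteq K$ with $\ell^m$ elements (the $\ell$-Sylow order of $\Gamma$, which equals that of $G$ since the kernel $K^\times \cdot \mathrm{Id}$ of $\GL(V) \to \PGL(V)$ has order prime to $\ell$) and an element $\bar A \in \PGL_2(K)$ with $\bar A U_1 = \langle e_1 \rangle_K$, $\bar A U_2 = \langle e_2 \rangle_K$, and $\bar A \Gamma \bar A^{-1} \in \{\PGL(V_L), \PSL(V_L)\}$ with $V_L = \langle e_1, e_2\rangle_L$. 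Lift $\bar A$ to some $A_0 \in \GL(V)$; I want to replace $A_0$ by a suitable $A$ so that in addition $A U_i = U_i$ for $i=1,2$ and, with the given $u_1, u_2$, the rational plane is $\langle u_1, u_2\rangle_L$.

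For the "moreover" part about fixing the directions, the point is that $\bar A$ sends $U_1, U_2$ to the two coordinate lines, and $U_1 \oplus U_2 = V$ (since they are independent in a $2$-dimensional space), so post-composing with a diagonal element of $\GL(V)$ (in the basis $u_1, u_2$) lets me send $\langle e_1\rangle, \langle e_2\rangle$ back to $U_1, U_2$ respectively; such a diagonal conjugation preserves $\PGL(V_L)$ and $\PSL(V_L)$ up to moving the rational structure, which is exactly what we are allowed to do. More carefully: choose the lift and the diagonal correction so that $A u_1 = c_1 u_1$, $A u_2 = c_2 u_2$ for scalars $c_i$, arranged so that $A G A^{-1}$ contains all $T_{u_1}[\lambda], T_{u_2}[\lambda]$ for $\lambda \in L$ and so that $\langle u_1, u_2 \rangle_L$ is $L$-rational — here I would use the hypothesis $u_1 \bullet u_2 \in L^\times$ to guarantee the form restricted to $\langle u_1, u_2\rangle_L$ takes values in $L$, and adjust the $c_i$ (or rescale $u_i$ within $U_i$) to pin down the parameter groups.

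The heart of the verification is then to check conditions \eqref{lem:definition:ii:a} and \eqref{lem:definition:ii:b} of Lemma~\ref{lem:definition} for $W_L = \langle u_1, u_2 \rangle_L$ and the group $AGA^{-1}$. For \eqref{lem:definition:ii:a}, every $T_v[\lambda]$ with $v \in W_L$, $\lambda \in L$ has image in $\PGL(V)$ an elation lying in $\PGL(V_L)$ (or $\PSL(V_L)$), hence in $A\Gamma A^{-1}$; since the preimage in $\GL(V)$ of that elation consists of scalar multiples of $T_v[\lambda]$, and $AGA^{-1}$ contains at least one such multiple, Lemma~\ref{lem:auxiliar} forces $T_v[\lambda]$ itself into $AGA^{-1}$ — provided we have also arranged that a determinant-$1$ or multiplier-normalised representative is chosen, which is where a little care with $\SL$ versus $\GL$ and with the scalar ambiguity is needed. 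For \eqref{lem:definition:ii:b}, given any $U \in \calL(AGA^{-1}) \subseteq \PP(V)$ with $U \subseteq W_K = V$, the corresponding elation lies in $A\Gamma A^{-1} \subseteq \PGL(V_L)$, so its centre $U$ is an $L$-rational line, i.e.\ $U = \langle u \rangle_K$ for some $u \in W_L$, and one checks $\calP_u(AGA^{-1}) = L$ again by the same scalar-lifting argument together with the fact that in $\PGL(V_L)$ the elations with a fixed centre form a group of order $\ell^m = |L|$.

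I expect the main obstacle to be the bookkeeping around the $\GL$-versus-$\PGL$ passage: choosing the lift $A$ of $\bar A$ and simultaneously (i) fixing $U_1, U_2$, (ii) making the rational plane exactly $\langle u_1, u_2\rangle_L$ with the prescribed $u_i$, and (iii) ensuring that the scalar ambiguity in lifting elations is resolved in favour of the actual transvections lying in $AGA^{-1}$ — all at once. Lemma~\ref{lem:auxiliar} is the key tool that collapses the scalar ambiguity, and Lemma~\ref{lem:basechange} governs how directions transform under conjugation, so the argument is really an exercise in combining those two lemmas with Proposition~\ref{prop:1911}; no new idea beyond careful normalisation should be required.
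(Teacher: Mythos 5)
Your overall route is the same as the paper's: pass to the image $\Gamma$ of $G$ in $\PGL(V)$, invoke Proposition~\ref{prop:1911} (applied with $e_i=u_i$, which already returns $AU_i=U_i$ and $V_L=\langle u_1,u_2\rangle_L$, so the extra diagonal correction you describe is unnecessary), and then translate back to $\GL(V)$ using Lemma~\ref{lem:auxiliar} to control the scalar ambiguity. Your remarks on the equality of the $\ell$-Sylow orders of $G$ and $\Gamma$, and your verification of condition~(\ref{lem:definition:ii:b}) of Lemma~\ref{lem:definition}, are in order.

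There is, however, a genuine gap in your verification of~(\ref{lem:definition:ii:a}). From the fact that the image of $T_v[\lambda]$ (with $v\in V_L$, $\lambda\in L$) lies in $A\Gamma A^{-1}$ you only obtain that $AGA^{-1}$ contains \emph{some} element $c\,T_v[\lambda]$ with $c\in K^\times$, and Lemma~\ref{lem:auxiliar} cannot promote this to $T_v[\lambda]\in AGA^{-1}$: that lemma is a uniqueness statement (two \emph{transvections} with the same image in $\PGL(V)$ coincide) and produces no new group elements; for $c\neq 1$ the element $c\,T_v[\lambda]$ is not a transvection, so the lemma does not even apply to it. You flag that ``a little care'' with the $\SL$/$\GL$ passage is needed, but what is needed is not bookkeeping: it is the group-theoretic input that $\SL(V_L)\subseteq AGA^{-1}$, which is exactly what the paper asserts at this point. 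One way to see it: the preimage of $\PSL(V_L)$ in $\GL(V)$ is $K^\times\cdot\SL(V_L)$, whose derived subgroup is $\SL(V_L)$ (as $K^\times$ is central and $\SL_2(L)$ is perfect for $|L|\geq 4$), and $\SL(V_L)$ has no proper subgroup surjecting onto $\PSL(V_L)$ (such a subgroup would have index $2$, contradicting perfectness); hence the relevant subgroup of $AGA^{-1}$ contains all of $\SL(V_L)$. Alternatively, $c\,T_v[\lambda]$ has $\ell$-power order only when $c=1$, so an $\ell$-Sylow subgroup of $AGA^{-1}$ consists of honest transvections and maps isomorphically onto an $\ell$-Sylow subgroup of $\PSL(V_L)$. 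Once $\SL(V_L)\subseteq AGA^{-1}$ is established, Lemma~\ref{lem:auxiliar} is used in the opposite direction, exactly as in the paper: to show that every transvection of $AGA^{-1}$ equals some $T_v[\lambda]$ with $v\in V_L$ and $\lambda\in L$.
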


\begin{proof}
We apply Proposition~\ref{prop:1911} with $e_1=u_1$, $e_2=u_2$, and $\Gamma$ the image of $G$ in $\PGL(V)$,
and obtain $A \in \GL(V)$ (any lift of the matrix provided by the proposition)
such that $A \Gamma A^{-1}$ equals $\PSL(V_L)$ or $\PGL(V_L)$ for the
$L$-rational plane $V_L=\langle u_1, u_2\rangle_L \subseteq V$, and $AU_i = U_i$ for $i=1,2$.
For $\PSL(V_L)$ and $\PGL(V_L)$ it is true that the elations contained in them are
precisely the images of $T_v[\lambda]$ for $v \in V_L$ and $\lambda \in L$.

First, we know that all such $T_v[\lambda]$ are contained in $\SL(V_L)$ and,
thus, in $AGA^{-1}$ (since $A\Gamma A^{-1}$ is $\PSL(V_L)$ or $\PGL(V_L)$).
Second, by Lemma~\ref{lem:auxiliar} the image of $T_v[\lambda]$ in~$A\Gamma A^{-1}$
has a unique lift to a transvection in~$\SL(V_L) \subseteq AGA^{-1}$,
namely $T_v[\lambda]$.
This proves that the transvections of $AGA^{-1}$ are precisely the
$T_v[\lambda]$ for $v \in V_L$ and $\lambda \in L$. Hence, $V_L$ is
an $(L,AGA^{-1})$-rational plane.
\end{proof}

\begin{lem}\label{lem:extend}
Let $U_1, U_2 \in \calL(G)$ be such that $H = U_1 \oplus U_2$
is a symplectic plane in~$V$.
By $G_0$ we denote the subgroup $\{ g \in G \;|\; g(H) \subseteq H\}$ and by
$G|_H$ the restrictions of the elements of~$G_0$ to~$H$.

Then $\calL(G|_H) \subseteq \calL(G)$ (under the inclusion $\PP(H) \subseteq \PP(V)$).
\end{lem}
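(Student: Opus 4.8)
The plan is to show that any nontrivial transvection of $G|_H$ on $H$ actually extends to (or rather is the restriction of) a transvection of $G$ on all of $V$, so that its centre, which lies in $\PP(H)\subseteq\PP(V)$, belongs to $\calL(G)$. So let $\bar\tau\in G|_H$ be a nontrivial symplectic transvection of the symplectic plane $H$, and let $g\in G_0$ be an element with $g|_H=\bar\tau$. The point is that $g$ fixes $H$ setwise, hence also fixes the orthogonal complement $H^\perp$ setwise (because $g\in\GSp(V)$, so it preserves orthogonality, and $V = H\perp H^\perp$ since $H$ is a nonsingular symplectic plane). Thus with respect to the decomposition $V = H\oplus H^\perp$ the element $g$ is block-diagonal, $g = \bar\tau \oplus g'$ with $g' = g|_{H^\perp}\in\GSp(H^\perp)$, and the two blocks have the same multiplier $\alpha = m(g)$.

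The second step is to arrange that $g'$ is the identity. Since $\bar\tau$ is a symplectic transvection of $H$, it has multiplier $1$, and therefore $\alpha = m(g) = 1$, so $g\in\Sp(V)$ and $g'\in\Sp(H^\perp)$. Now $g$ restricted to $H$ is a transvection, so $g - \id_V$ has rank $1$ when restricted to $H$; I want to deduce that $g$ itself is a transvection of $V$, i.e.\ that $g' = \id_{H^\perp}$. This is the crux. The subgroup generated by $g$ need not be an $\ell$-group a priori, so I cannot directly say it is unipotent; but $\bar\tau$ has order $\ell$ (its parameter lies in $K$ of characteristic $\ell$, and $T_v[\lambda]^\ell = T_v[\ell\lambda] = \id$), so after replacing $g$ by a suitable power I may assume $g$ has order exactly $\ell$ on $H$; then $g^\ell$ acts trivially on $H$ and equals $\id_H\oplus (g')^\ell$ — but this does not yet kill $g'$.

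To fix this properly I would instead argue as follows: the element $g$ has order a power of $\ell$ times the order of $g'$; replacing $g$ by $g^{\ell^k}$ for $k$ large kills the $\ell$-part of $g'$ but also kills $\bar\tau$, which is useless. So the right move is to use that $g$ maps $\calL(G)$ into itself (Lemma~\ref{lem:fix}) and that $\bar\tau = g|_H$ is a transvection $T_v[\lambda]$ with $v\in H$, $\lambda\in K^\times$, whose centre is $\langle v\rangle_K\subseteq H$. Consider the commutator trick: for any transvection $T_w[\mu]\in G$ with $w\in H$ (and there is at least one, since $U_1,U_2\in\calL(G)$ give such transvections on $H$), conjugating by $g$ yields $g T_w[\mu] g^{-1} = T_{gw}[\mu]\in G$ (multiplier $1$), and $gw$ still lies in $H$. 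Hmm — this shows the centres in $H$ are permuted but does not immediately produce the centre of $\bar\tau$ itself inside $\calL(G)$.

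Let me restart the crux step more carefully, which is where the main obstacle lies. Take $\bar\tau = g|_H$ nontrivial. Then $[g, h] := g h g^{-1} h^{-1}$ for $h = T_w[\mu]\in G$ with $w\in H$ chosen so that $\langle v, w\rangle \ne 0$: on $H$ this commutator is $[\bar\tau, T_w[\mu]|_H]$, while on $H^\perp$ it is $[g', \id] = \id_{H^\perp}$ since $h$ acts trivially on $H^\perp$ (as $w\in H$ so $T_w[\mu]$ fixes $H^\perp$ pointwise). A direct computation with $\bar\tau = T_v[\lambda]$ shows $[T_v[\lambda], T_w[\mu]]$ acting on $H$; choosing $v\bullet w\in K^\times$ one checks this commutator is again a nontrivial transvection of $H$ (this is the standard fact, used implicitly in the paper, that transvections with nonorthogonal directions have transvection commutators — or at least an element whose restriction to $H$ has rank-one defect). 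Since this commutator is in $G$, fixes $H^\perp$ pointwise, and restricts to a rank-one map on $H$, it is genuinely a transvection $T_{v'}[\lambda']$ of $V$ with $v'\in H$, hence $\langle v'\rangle_K\in\calL(G)\cap\PP(H)$. Iterating/varying $w$ and using that $\PSL_2$ of the relevant subfield acts transitively on lines of $H$ (as extracted from Proposition~\ref{prop:1911}/Lemma~\ref{lem:1911} applied to $G|_H$ on $H$), one shows that every centre in $\calL(G|_H)$ is hit this way, giving $\calL(G|_H)\subseteq\calL(G)$.

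\medskip

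\textbf{Proof strategy, summarised.} First, observe that for $g\in G_0$, since $g$ preserves the nonsingular symplectic plane $H$ it also preserves $H^\perp$, and $V = H\perp H^\perp$, so $g = g|_H \oplus g|_{H^\perp}$ with both summands having multiplier $m(g)$. Second, let $\langle u\rangle_K\in\calL(G|_H)$, so there is $g\in G_0$ with $g|_H = T_u[\lambda]$ (for some $\lambda\in K^\times$); then $m(g) = 1$, so $g\in\Sp(V)$ and $g|_{H^\perp}\in\Sp(H^\perp)$. Third — the key step — pick $h = T_w[\mu]\in G$ with $w\in H$ and $u\bullet w\in K^\times$ (such an $h$ exists because $U_1,U_2\in\calL(G)$ span $H$, so not both $U_1,U_2$ are orthogonal to $\langle u\rangle$); then the commutator $c := g h g^{-1} h^{-1}\in G$ acts as the identity on $H^\perp$ (since $h$ does and $g|_{H^\perp}$ commutes with $\id$) and acts on $H$ as $[T_u[\lambda], T_w[\mu]]$, which a short computation shows to be a nontrivial symplectic transvection of $H$ (using $u\bullet w\in K^\times$ and $\mathrm{char}\,K\ge 5$). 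Therefore $c$ is a genuine nontrivial symplectic transvection of $V$, lying in $G$, with centre a line inside $H$; hence that centre is in $\calL(G)$. Fourth, to recover the original centre $\langle u\rangle_K$ itself: apply Lemma~\ref{lem:1911} to $G|_H\subseteq\GL(H)$ to get a subfield $L$ and, after the allowed conjugation which fixes $U_1,U_2$, the transvection centres in $H$ form the $L$-lines of an $(L,G|_H)$-rational plane on which $\PSL_2(L)$ (or $\PGL_2(L)$) acts; since this action is transitive on lines and we have produced at least one centre in $\calL(G)\cap\PP(H)$, and since conjugating a transvection of $G$ by an element of $G_0$ keeps it in $G$ and keeps its centre in $H$, every line of $\calL(G|_H)$ arises as $g'(\langle u'\rangle)$ for the already-found centre and suitable $g'\in G_0$, giving $\calL(G|_H)\subseteq\calL(G)$.

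\medskip

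\textbf{Main obstacle.} The delicate point is the third step: verifying that the commutator $[T_u[\lambda], T_w[\mu]]$ is itself a nontrivial transvection of the plane $H$ (equivalently, that $g$ can be massaged into a genuine transvection of $V$ rather than merely an element restricting to one on $H$). This is exactly where $\mathrm{char}\,K\ge 5$ and $u\bullet w\in K^\times$ enter, and it is a $2\times 2$ matrix computation that must be done carefully; once it is in hand, the rest is bookkeeping with the orthogonal decomposition $V = H\perp H^\perp$ and the transitivity statement coming from Lemma~\ref{lem:1911}.
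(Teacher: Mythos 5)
There is a genuine error at what you yourself call the crux. The claim that the commutator of two transvections of the plane $H$ with non-orthogonal directions is again a nontrivial transvection is false. Writing $T_u[\lambda]$ and $T_w[\mu]$ in a symplectic basis of $H$ with $u\bullet w=1$, these are $\begin{pmatrix}1&-\lambda\\0&1\end{pmatrix}$ and $\begin{pmatrix}1&0\\ \mu&1\end{pmatrix}$, and their commutator has trace $2+\lambda^2\mu^2\neq 2$, so it is not unipotent, let alone a transvection. (The standard fact you are half-remembering goes the other way: transvections whose directions lie in each other's axes --- in the symplectic setting, orthogonal directions --- have a commutator that is a transvection or the identity; two transvections of a plane with distinct directions generate, by Dickson, a group containing $\SL_2$ of a subfield, which is nothing like a transvection.) So your element $c$ is not a transvection of $V$, and step three produces nothing.

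The good news is that step three is unnecessary, and your step four, correctly seeded, is essentially the paper's proof. You do not need to manufacture a first element of $\calL(G)\cap\PP(H)$: the hypothesis already hands you genuine transvections $\tau_1,\tau_2\in G$ with centres $U_1,U_2\subseteq H$. The paper's argument is exactly your fourth step run from this seed: apply Lemma~\ref{lem:1911} to $G|_H$ to get (after a conjugation by some $A$) an $(L,AG|_HA^{-1})$-rational plane $H_L$, so that $AG|_HA^{-1}\supseteq\SL(H_L)$ acts transitively on the centres; given $U\in\calL(G|_H)$, find $h\in AG|_HA^{-1}$ with $hAU=AU_1$, lift $A^{-1}hA\in G|_H$ to some $\gamma\in G_0$, note $\gamma U=\gamma|_HU=U_1$, and conclude that $\gamma^{-1}\tau_1\gamma\in G$ is a genuine transvection of $V$ with centre $\gamma^{-1}U_1=U$ by Lemma~\ref{lem:basechange}. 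Your opening observation (that $g\in G_0$ is block-diagonal for $V=H\perp H^\perp$ and that one cannot simply force $g|_{H^\perp}=\id$) is correct and is precisely why one must conjugate an existing transvection of $G$ rather than try to extend $g|_H$; but the commutator device you substitute for this does not work, and the lemma should be proved by the transitivity-plus-conjugation argument alone.
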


\begin{proof}
Let $\tau_i \in G$ be transvections with directions $U_i$ for $i=1,2$.
Clearly, $\tau_1, \tau_2 \in G_0$ and their restrictions to~$H$ are symplectic
transvections with the same directions.
Consequently, Lemma~\ref{lem:1911} provides us with $A \in \GL(H)$
and an $(L,AGA^{-1})$-rational plane $H_L \subseteq H$.

Let $U \in \calL(G|_H)$. This means that there is $g \in G_0$ such that
$g|_H$ is a transvection with direction~$U$, so that $A g|_H A^{-1}$ is a transvection in
$A G|_H A^{-1}$ with direction~$AU$ by Lemma~\ref{lem:basechange}.
As $H_L$ is $(L,AG|_H A^{-1})$-rational, all transvections $T_v[\lambda]$
for $v \in H_L$ and $\lambda \in L$ lie in~$AG|_H A^{-1}$,
whence $AG|_H A^{-1}$ contains~$\SL(H_L)$.
Consequently, there is $h \in AG|_H A^{-1}$ such that $h AU = AU_1$.
But $A^{-1} h A \in G|_H$, whence there is $\gamma \in G_0$ with
restriction to~$H$ equal to $A^{-1} h A$.
As $\gamma H \subseteq H$, it follows that
$\gamma U = \gamma|_H U = A^{-1} h A U = U_1$.
Now, $\gamma^{-1} \tau_1 \gamma$ is a transvection in~$G$ with centre $\gamma^{-1}U_1 = U$,
showing $U \in \calL(G)$.
\end{proof}

\begin{cor}\label{cor:extend} Let $U_1, U_2 \in \calL(G)$ be such that $H = U_1 \oplus U_2$
is a symplectic plane in~$V$.
By $G_0$ we denote the subgroup $\{ g \in G \;|\; g(H) \subseteq H\}$ and by
$G|_H$ the restrictions of the elements of~$G_0$ to~$H$. Then the transvections of $G\vert_H$ are the restrictions to $H$ of the transvections of $G$ with centre in $H$.
\end{cor}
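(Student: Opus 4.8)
The plan is to unwind the definitions and show the two sets of transvections coincide by a double inclusion. One direction is immediate: if $\tau \in G$ is a transvection with centre $U \subseteq H$, then in particular $\tau(H) \subseteq H$ (since $\tau$ fixes a hyperplane and moves vectors along $U \subseteq H$, so $\tau(h) = h + \lambda\langle h, v\rangle v \in H$ for $h \in H$), hence $\tau \in G_0$ and $\tau|_H$ is a transvection of $G|_H$ with the same centre $U$. The content is therefore the reverse inclusion: every transvection of $G|_H$ arises this way.

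So suppose $g \in G_0$ is such that $g|_H$ is a nontrivial transvection of $G|_H$ with some centre $U \in \PP(H)$. By Lemma~\ref{lem:extend} (or rather its proof), $U \in \calL(G)$, so there is a transvection $\tau_U \in G$ with centre $U$; write $\tau_U = T_v[\mu]$ with $\langle v\rangle_K = U$. I would then argue that $g|_H$ must actually \emph{equal} $\tau_U|_H$ up to the parameter — more precisely, that $g|_H = T_v[\nu]$ for some $\nu \in K$ — and that $T_v[\nu] \in G$. For the first point: a transvection on the plane $H$ is determined by its centre and its parameter (relative to a chosen direction vector), so $g|_H = T_v[\nu]$ for a unique $\nu \in K$, where we measure the parameter against the same $v$. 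The key step is to promote this to $T_v[\nu] \in G$. Here I would use Lemma~\ref{lem:1911}: conjugating by a suitable $A \in \GL(H)$ fixing $U_1, U_2$, we get an $(L, AG|_H A^{-1})$-rational plane $H_L \ni v$ (after adjusting $v$ within $U$), and $AG|_HA^{-1} \supseteq \SL(H_L)$; in particular $A(g|_H)A^{-1}$ is a transvection $T_{v'}[\nu']$ with $v' \in H_L$, $\nu' \in L$, hence lies in $\SL(H_L)$, so it has a lift $\gamma \in G_0$ with $\gamma|_H = A^{-1}T_{v'}[\nu']A$. But $A^{-1}T_{v'}[\nu']A = T_{A^{-1}v'}[\nu']$ is itself a transvection of $V$, not merely of $H$ — and here is the subtlety: a priori $\gamma \in G$ need not equal this transvection on all of $V$, only on $H$.

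To close this gap, I would invoke Lemma~\ref{lem:auxiliar} together with the structure already extracted in the proof of Lemma~\ref{lem:extend}: there, one produces $\gamma \in G_0$ with $\gamma|_H = A^{-1}hA$ for $h \in AG|_HA^{-1}$; the same mechanism lets us choose a lift $\gamma' \in G$ of the transvection $A^{-1}T_{v'}[\nu']A$ whose restriction to $H$ is the given transvection, and since $A^{-1}T_{v'}[\nu']A$ has rank-one defect concentrated in $U \subseteq H$, the element $\gamma'$ and the genuine transvection $T_{A^{-1}v'}[\nu'] \in \Sp(V)$ agree on $H$; composing $\gamma'$ with an element of $G$ supported on $H$ that conjugates $U$ to $U_1$ (as in Lemma~\ref{lem:extend}), and then conjugating the known transvection $\tau_1 \in G$ with centre $U_1$ back, produces a genuine transvection in $G$ with centre $U$ whose restriction to $H$ is exactly $g|_H$. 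The main obstacle is precisely this promotion from ``transvection on $H$ lying in $G|_H$'' to ``transvection on $V$ lying in $G$'': one must carefully track that the lift furnished by $G_0$ can be replaced by an actual transvection of $V$, which is where Lemma~\ref{lem:auxiliar} (uniqueness of the transvection lift inside $\SL(H_L)$) does the essential work, the rest being the bookkeeping already present in Lemma~\ref{lem:extend}.
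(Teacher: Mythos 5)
Your reduction to the hard inclusion is right, and you have correctly located the crux: given $g \in G_0$ such that $g|_H = T_v[\nu]|_H$ is a nontrivial transvection of~$H$, one must produce an actual transvection of~$G$ (an element of $\GSp(V)$, not merely of $G|_H$) with centre $U=\langle v\rangle_K$ and the \emph{same} parameter~$\nu$. But your proposed resolution does not achieve this. Conjugating the known transvection $\tau_1 = T_{v_1}[\lambda_1] \in G$ by an element $\delta \in G_0$ carrying $U_1$ to $U$ yields, by Lemma~\ref{lem:basechange}, the transvection $T_{\delta v_1}[\lambda_1/m(\delta)] \in G$: this has the right centre $U$ (which is exactly what Lemma~\ref{lem:extend} already gives) but an uncontrolled parameter. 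Nothing in your argument shows that this parameter can be arranged to equal~$\nu$; equivalently, nothing shows that the parameter group $\calP_v(G)$ exhausts the set of parameters occurring for transvections of $G|_H$ with centre~$U$, which is a full (scaled) copy of~$L$ by the $(L,AG|_HA^{-1})$-rationality. Conjugation only transports parameter groups between centres (up to scaling); it cannot enlarge them. Lemma~\ref{lem:auxiliar} concerns the injectivity of $\Sp\to\PSp$ on transvections and has no bearing on the restriction map from $V$ to~$H$, so it cannot close this gap either.

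The paper closes the gap by a counting argument that is absent from your proposal. Let $T \le G$ be the subgroup generated by the transvections of~$G$ with centre in~$H$ (these all act trivially on $H^\perp$, so restriction gives an isomorphism $T \cong T|_H$, and any element of~$T$ whose restriction to~$H$ is a transvection is itself a transvection of~$V$ with centre in~$H$), and let $U$ be the subgroup of $G|_H$ generated by all transvections of $G|_H$; clearly $T|_H \subseteq U$. Applying Lemma~\ref{lem:1911} separately to $T|_H$ and to~$U$ exhibits them as conjugates of $\SL_2(L')$ and $\SL_2(L)$; the subfields are pinned down by the sets of transvection centres, and $\calL(T|_H) = \calL(G)\cap H = \calL(G|_H) = \calL(U)$ by Lemma~\ref{lem:extend}, so $L = L'$, the two groups have equal cardinality, and therefore $T|_H = U$. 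Combined with the lifting remark above, this equality is exactly the missing assertion that every transvection of $G|_H$ is the restriction of a transvection of~$G$ with centre in~$H$. Without some such global comparison of the two copies of $\SL_2$, the pointwise conjugation argument you sketch cannot succeed.
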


\begin{proof}
Let $T$ be the subgroup of $G$ generated by the transvections of $G$ with centre in $H$. We can naturally identify $T$ with $T\vert_H$. Let $U$ be the subgroup of $G\vert_H$ generated by the transvections of $G\vert_H$. We have that $T\vert_H\subset U$.

Applying Lemma \ref{lem:1911} to the $K$-vector space $H$ and the subgroup $U\subset \GL(H)$, there exists a subfield $L\subset K$, and an $L$-rational plane $H_L$ such that $U$ is conjugate to $\SL(H_L)$, hence $U\simeq \SL_2(L)$. Applying Lemma \ref{lem:1911} to the $K$-vector space $H$ and the subgroup $T\vert_H$, we obtain a subfield $L'\subset K$, and an $L'$-rational plane $H_{L'}$ such that $T\vert_H$ is conjugate to $\SL(H_{L'})$, hence $H\simeq \SL_2(L')$. But $\calL(T\vert_H)=\calL(G)\cap H=\calL(G\vert_H)=\calL(U)$ by Lemma~\ref{lem:extend}, whence $L=L'$ and the cardinalities of $U$ and $T\vert_H$ coincide. Therefore they are equal.
\end{proof}

\begin{prop}\label{prop:mitchell}
Let $U_1, U_2 \in \calL(G) \subseteq \PP(V)$ which are not orthogonal.
Then there exist a subfield $L \le K$, $A \in \GSp(V)$, and an
$L$-rational symplectic plane~$H_L$ such that
$AU_1 \subseteq H_K$, $AU_2 \subseteq H_K$ and such that $H_L$ is $(L,AGA^{-1})$-rational.
Moreover, if we fix $u_1\in U_1$, $u_2\in U_2$ such that $u_1\bullet u_2\in L^\times$,
we can choose $H_L=\langle u_1, u_2\rangle_L$ and $A$ satisfying $AU_1= U_1$, $AU_2=U_2$.
\end{prop}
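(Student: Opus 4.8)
The plan is to reduce the general symplectic setting to the two-dimensional linear situation already handled by Lemma~\ref{lem:1911}, using the restriction device from Lemma~\ref{lem:extend} and Corollary~\ref{cor:extend} to transfer the $(L,G)$-rationality statement back up to $V$. First I would observe that, since $U_1$ and $U_2$ are not orthogonal, the plane $H := U_1 \oplus U_2$ is a nonsingular (hence symplectic) subspace of~$V$: for $0 \neq u_1 \in U_1$ and $0 \neq u_2 \in U_2$ we have $u_1 \bullet u_2 \neq 0$, and after rescaling $u_2$ (which does not change the line $U_2$) we may even arrange $u_1 \bullet u_2 = 1$, so in particular $u_1 \bullet u_2 \in L^\times$ for whatever subfield~$L$ will eventually appear. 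Fix such $u_1, u_2$ from the start, as in the statement.

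Next I would apply the machinery around~$H$. Let $G_0 = \{g \in G : g(H) \subseteq H\}$ and let $G|_H$ be the group of restrictions to~$H$. By Lemma~\ref{lem:extend} (whose hypotheses hold: $U_1, U_2 \in \calL(G)$ and $H$ is a symplectic plane) and Corollary~\ref{cor:extend}, the transvections of $G|_H$ are exactly the restrictions to~$H$ of the transvections of~$G$ with centre in~$H$; in particular $U_1, U_2 \in \calL(G|_H)$ and these are linearly independent directions of transvections of $G|_H \subseteq \GL(H)$. Now apply Lemma~\ref{lem:1911} to the $2$-dimensional space~$H$ and the subgroup $G|_H \subseteq \GL(H)$, with the chosen $u_1 \in U_1$, $u_2 \in U_2$: it produces a subfield $L \le K$ with $|L| = \ell^m$ (the $\ell$-Sylow order of $G|_H$), a linear map $A_0 \in \GL(H)$ fixing $U_1$ and $U_2$, and the $L$-rational plane $H_L = \langle u_1, u_2 \rangle_L$, which is $(L, A_0 (G|_H) A_0^{-1})$-rational. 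Since $u_1 \bullet u_2 \in L^\times$, the restricted form takes values in~$L$ on $H_L$, so $H_L$ is a symplectic $L$-rational plane.

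It then remains to package $A_0$ as an element $A \in \GSp(V)$ and to check the $(L,AGA^{-1})$-rationality of $H_L$ as a subspace of~$V$ (not merely of~$H$). For the first point: $A_0$ is an invertible linear map of~$H$; I would extend it to~$V$ by letting it act as a symplectic similitude — concretely, decompose $V = H \oplus H^\perp$, note that $A_0$ is a similitude of the symplectic form on~$H$ with some multiplier $\alpha$ (this needs a small check, or one can simply rescale/adjust $A_0$ within its coset so that it lies in $\GSp(H)$, which is harmless since the conclusions of Lemma~\ref{lem:1911} only constrain $A_0$ up to the subgroup generated by elements of $\SL(H_L)$, all of which have multiplier~$1$), and extend by a similitude of~$H^\perp$ with the same multiplier, or even by the identity after arranging $\alpha = 1$. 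This yields $A \in \GSp(V)$ with $A|_H = A_0$, $AU_1 = U_1$, $AU_2 = U_2$, hence $AU_1, AU_2 \subseteq H_K = H$. For the $(L,AGA^{-1})$-rationality: I would use the criterion of Lemma~\ref{lem:definition}. Condition~(a), $T_{H_L}[L] \subseteq AGA^{-1}$, holds because each such transvection restricts to a transvection of $A_0(G|_H)A_0^{-1}$ with centre in~$H_L$ and, being the identity on~$H^\perp$, is conjugate via $A$ of a transvection of~$G$. Condition~(b) is where the real content sits: given $U \in \calL(AGA^{-1})$ with $U \subseteq H_K$, one has $A^{-1}U \in \calL(G)$ with $A^{-1}U \subseteq H$, so the corresponding transvection of~$G$ has centre in~$H$, hence (Corollary~\ref{cor:extend}) restricts to a transvection of $G|_H$ with centre $A^{-1}U$; applying the $(L, A_0(G|_H)A_0^{-1})$-rationality of $H_L$ gives $u \in U \cap H_L$ with $\calP_u(G|_H) = L$, and then one must argue $\calP_u(AGA^{-1}) = L$ as well — the inclusion $\supseteq$ is condition~(a), and $\subseteq$ follows because any transvection of~$G$ with centre in~$H$ already restricts faithfully to~$H$ by Lemma~\ref{lem:auxiliar}, so its parameter is constrained by what happens in $G|_H$.

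The main obstacle I anticipate is precisely this last bookkeeping: making rigorous the claim that a transvection of $G$ (equivalently of $AGA^{-1}$) with centre inside the plane~$H$ is completely determined by its restriction to~$H$, so that the parameter groups computed inside $G|_H$ and inside~$G$ agree. Lemma~\ref{lem:auxiliar} and the explicit formula $T_v[\lambda](w) = w + \lambda\langle w,v\rangle v$ (which shows the transvection is the identity on $\langle v\rangle^\perp \supseteq H^\perp$ once $v \in H$) should close this gap, but one has to be careful that an element $g \in G$ whose restriction $g|_H$ happens to be a transvection need not itself be a transvection of~$V$; Corollary~\ref{cor:extend} is exactly what resolves this, by identifying $\calL(G|_H)$ with centres of genuine transvections of~$G$ lying in~$H$. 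Modulo that, the extension of $A_0$ to a symplectic similitude of~$V$ is routine.
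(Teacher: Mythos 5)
Your proposal is correct and follows essentially the same route as the paper: restrict to the symplectic plane $H=U_1\oplus U_2$, apply Lemma~\ref{lem:1911} to $G|_H$ to obtain $L$, the map on~$H$, and the plane $\langle u_1,u_2\rangle_L$, extend that map to an element of $\GSp(V)$ preserving~$H$, and use Corollary~\ref{cor:extend} to identify the transvections of $AGA^{-1}$ with centre in~$H$ with those of the conjugated restricted group. The only simplification you could make is that in dimension~$2$ one has $\GL(H)=\GSp(H)$ automatically (the form is a multiple of the determinant), so no rescaling of $A_0$ is needed before extending it, which is exactly why the paper can simply extend a symplectic basis of~$H$ to one of~$V$.
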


\begin{proof}
Let $H = U_1 \oplus U_2$ and note that this is a symplectic plane.
Define $G_0$ and $G|_H$ as in Lemma~\ref{lem:extend}.
Lemma~\ref{lem:1911} provides us with
$B \in \GL(H)$ such that $BU_i=U_i$ for $i=1,2$ and such that
$H_L = \langle u_1, u_2 \rangle_L$ is an $(L,B G|_H B^{-1})$-rational plane.
We choose $A \in \GSp(V)$ such that $A H \subseteq H$ and $A|_H = B$
(this is possible as any symplectic basis of $H$ can be extended to a symplectic
basis of~$V$).
We want to prove that $H_L$ is an $(L,AGA^{-1})$-rational symplectic plane in~$V$.

And, indeed, by Corollary~\ref{cor:extend}, the nontrivial transvections of $AGA^{-1}$ with direction in $H$ coincide with the nontrivial transvections of $BG\vert_HB^{-1}$, which in turn correspond bijectively to $(H_L\setminus \{0\}\times L)/L$.
\end{proof}

Note that Theorem~\ref{thm:gp} is independent of conjugating $G$ inside~$\Sp(V)$.
Hence, we will henceforth work with $(L,G)$-rational symplectic spaces
(instead of $(L,AGA^{-1})$-rational ones).

\begin{cor}\label{cor:mitchell}
\begin{enumerate}[(a)]
\item\label{cor:mitchell:a} Let $H_L$ be an $L$-rational
plane which contains an $(L,G)$-rational line $U_{1,L}$
as well as an $L$-rational line $U_{2,L}$ not orthogonal to~$U_{1,L}$ with
$U_{2,K} \in \calL(G)$.

Then $H_L$ is an $(L,G)$-rational symplectic plane.

\item\label{cor:mitchell:b} Let $U_{1,L} = \langle u_1 \rangle_L$ be an $(L,G)$-rational line
and $U_2 = \langle u_2 \rangle_K \in \calL(G)$ such that $u_1 \bullet u_2 \in L^\times$.

Then $\langle u_1, u_2 \rangle_L$ is an $(L,G)$-rational symplectic plane.
\end{enumerate}
\end{cor}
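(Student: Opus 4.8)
The plan is to reduce both parts to Proposition~\ref{prop:mitchell} together with the rationality characterisation of Lemma~\ref{lem:definition}, exploiting the fact (noted just before the corollary) that we are free to work with $(L,G)$-rational spaces rather than $(L,AGA^{-1})$-rational ones because Theorem~\ref{thm:gp} is insensitive to conjugating $G$ inside $\Sp(V)$. For part~(\ref{cor:mitchell:b}), I would first record that $H := \langle u_1 \rangle_K \oplus \langle u_2 \rangle_K$ is a symplectic plane, since $u_1 \bullet u_2 \in L^\times \subseteq K^\times$ means the restricted form is non-degenerate on it. Now apply Proposition~\ref{prop:mitchell} to the non-orthogonal centres $U_1 = \langle u_1\rangle_K$ and $U_2 = \langle u_2\rangle_K$ in $\calL(G)$, using the freedom in its ``moreover'' clause to fix exactly these $u_1, u_2$ (whose symplectic product already lies in $L^\times$): this yields a subfield $L' \le K$, an element $A \in \GSp(V)$ with $AU_i = U_i$, and an $(L',AGA^{-1})$-rational symplectic plane $\langle u_1, u_2\rangle_{L'}$. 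The one genuine point to check is that $L' = L$: this is where the hypothesis that $\langle u_1\rangle_L$ is already $(L,G)$-rational enters, via Lemma~\ref{lem:rational-lines}, which says $\calP_{u_1}(G) = L$.

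To pin down $L' = L$, I would argue as follows. The subfield $L'$ produced by Proposition~\ref{prop:mitchell} is, tracing through Proposition~\ref{prop:1911} and Lemma~\ref{lem:1911}, the field of cardinality $\ell^m$ where $\ell^m$ is the order of an $\ell$-Sylow of the image in $\PGL$ of the relevant two-dimensional group; equivalently, $\langle u_1, u_2\rangle_{L'}$ being $(L', AGA^{-1})$-rational forces $\calP_{u_1}(AGA^{-1}) = L'$ by Lemma~\ref{lem:rational-lines} applied inside $AGA^{-1}$. Since $A$ fixes the line $U_1 = \langle u_1\rangle_K$ and has some multiplier $\alpha$, Lemma~\ref{lem:basechange} gives $A T_{u_1}[\lambda] A^{-1} = T_{Au_1}[\lambda/\alpha]$ with $Au_1 \in U_1$, so conjugation by $A$ carries transvections with centre $U_1$ in $G$ bijectively (and $L$-linearly up to the fixed scalar coming from $\alpha$ and from $Au_1 = c u_1$) to those in $AGA^{-1}$; hence $\calP_{u_1}(AGA^{-1})$ and $\calP_{u_1}(G) = L$ coincide as subfields of $K$ (a scalar multiple of an additive subgroup of $K$ that happens to be a subfield and contains $1$ must equal that subfield). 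Therefore $L' = L$. Then $\langle u_1, u_2\rangle_L$ is $(L,AGA^{-1})$-rational, and by the remark preceding the corollary we may replace $G$ by $A^{-1}GA$ — or, more directly, observe that $AU_i = U_i$ and the scalar identifications above show $\langle u_1,u_2\rangle_L$ is in fact already $(L,G)$-rational — completing part~(\ref{cor:mitchell:b}).

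For part~(\ref{cor:mitchell:a}), I would deduce it from~(\ref{cor:mitchell:b}). Write $U_{1,L} = \langle u_1\rangle_L$ and $U_{2,L} = \langle u_2'\rangle_L$; non-orthogonality means $u_1 \bullet u_2' \in K^\times$, but since both vectors lie in the $L$-rational plane $H_L$ on which the form takes values in $L$, we actually have $u_1 \bullet u_2' \in L^\times$. Now $U_{2,K} = \langle u_2'\rangle_K \in \calL(G)$, so part~(\ref{cor:mitchell:b}) applies verbatim with $u_2 := u_2'$ and tells us $\langle u_1, u_2'\rangle_L = H_L$ is an $(L,G)$-rational symplectic plane. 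The main obstacle is the bookkeeping around the conjugating element $A$ and the subfield identification $L' = L$ in part~(\ref{cor:mitchell:b}); everything else is a direct application of the already-established Proposition~\ref{prop:mitchell} and the elementary lemmas.
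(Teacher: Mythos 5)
Your proposal follows essentially the same route as the paper: both rest on Proposition~\ref{prop:mitchell} plus a comparison of parameter groups via Lemma~\ref{lem:basechange} and Lemma~\ref{lem:rational-lines} to transfer rationality from $AGA^{-1}$ back to $G$. The only structural difference is that you prove~(\ref{cor:mitchell:b}) first and deduce~(\ref{cor:mitchell:a}) from it, while the paper does the reverse; the reduction is immediate in either direction (your observation that $L$-rationality of $H_L$ forces $u_1\bullet u_2'\in L^\times$ is exactly what is needed). Your identification $L'=L$ is correct and is in substance the paper's computation $\alpha/a_1^2\in L$: writing $\calP_{u_1}(AGA^{-1})=\tfrac{a_1^2}{\alpha}\calP_{u_1}(G)=\tfrac{a_1^2}{\alpha}L$, a scalar multiple of $L$ that is again a subfield containing $1$ must equal $L$.

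Two cautions on the final step. First, the fallback ``replace $G$ by $A^{-1}GA$ by the remark preceding the corollary'' is not available: the corollary is a statement about the given $G$ and is applied repeatedly to that same $G$ in Sections~\ref{sec:merge} and~\ref{sec:extend} (the paper stresses that no further conjugation is permitted there), and $(L,G)$-rationality is not conjugation-invariant by Lemma~\ref{lem:basechange}. So only your ``more directly'' alternative is legitimate. Second, that alternative is asserted rather than carried out: knowing that the two lines $\langle u_1\rangle_L$ and $\langle u_2\rangle_L$ behave well is not yet $(L,G)$-rationality of the plane. One must verify both conditions of Lemma~\ref{lem:definition} for $G$, i.e.\ that $T_{b_1u_1+b_2u_2}[\lambda]\in G$ for all $b_1,b_2,\lambda\in L$ (condition~(\ref{lem:definition:ii:a})) and that any centre $\langle u_1+bu_2\rangle_K\in\calL(G)$ has $b\in L$ with parameter group $L$ (condition~(\ref{lem:definition:ii:b})). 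This is where the bulk of the paper's proof lives; it uses $a_1a_2=\alpha$ to get $a_2/a_1=\alpha/a_1^2\in L$ and then conjugates $T_{b_1u_1+b_2u_2}[\lambda]$ explicitly by $A$. All the scalar facts you need are already contained in your $L'=L$ argument, so this is a matter of assembly rather than a missing idea, but as written the step is incomplete.
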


\begin{proof}
(\ref{cor:mitchell:a}) Fix $u_1\in U_{1, L}$ and $u_2\in U_{2, L}$ such that $u_1\bullet u_2=1$,
and call $W_L=\langle u_1, u_2\rangle_L$.
Apply Proposition \ref{prop:mitchell}: we get $L\subseteq K$ and $A\in \GSp(V)$
such that $\langle AU_{1, L}\rangle_K=\langle u_1\rangle_K$, $AU_2=\langle u_2\rangle_K$
and $W_L$ is $(L, AGA^{-1})$-rational.
Let $a_1, a_2\in K^\times$ be such that $Au_1=a_1u_1$ and $Au_2=a_2u_2$.
The proof will follow three steps: we will first see that $\calP_{u_2}(G)=L$,
then we will see that $H_L$ satisfies Lemma \ref{lem:definition}~(\ref{lem:definition:ii:a}) and
finally we will see that $H_L$ satisfies Lemma \ref{lem:definition}~(\ref{lem:definition:ii:b}).

Let $\alpha$ be the multiplier of~$A$.
First note the following equality between $\alpha$, $a_1$ and $a_2$:
$$1=u_1\bullet u_2=\frac{1}{\alpha} (Au_1\bullet Au_2)
   = \frac{1}{\alpha} (a_1 u_1\bullet a_2u_2)=\frac{a_1a_2}{\alpha}.$$
Recall that $\calP_{av}(G)=\frac{1}{a^2}\calP_v(G)$, and,
from Lemma \ref{lem:basechange} it follows that
$\calP_{Av}(AGA^{-1})=\frac{1}{\alpha}\calP_v(G)$.

On the one hand, since $U_{1, L}$ is $(L, G)$-rational and $u_1\in U_{1, L}$,
we know that $\calP_{u_1}(G)=L$ by Lemma~\ref{lem:rational-lines}.
On the other hand, since $\langle u_1\rangle_L$ is $(L, AGA^{-1})$-rational,
$\calP_{u_1}(AGA^{-1})=L$, hence $\calP_{u_1}(G)=\frac{\alpha}{a_1^2}L$. We thus have $\frac{\alpha}{a_1^2}\in L$.
Moreover, since $\langle u_2\rangle_L$ is $(L, AGA^{-1})$-rational
(e.g.\ using Lemma~\ref{lem:rational-subs}), we have that $\calP_{u_2}(AGA^{-1})=L$, hence
$\calP_{u_2}(G)= \frac{\alpha}{a_2^2}L =\frac{a_1^2\alpha}{\alpha^2}L=\frac{a_1^2}{\alpha}L=L$.
This proves that $\langle u_2 \rangle_L$ is $(L, G)$-rational by Lemma~\ref{lem:rational-lines}.

Next we will see that $T_{H_L}[L]\subseteq G$.
Let $b_1, b_2\in L$ with $b_1 \neq 0$ and $\lambda\in L^\times$.
Consider the transvection $T_{b_1 u_1 + b_2 u_2}[\lambda]$.
We want to prove that it belongs to $G$. We compute
\begin{equation*}AT_{b_1 u_1 + b_2 u_2}[\lambda]A^{-1}= T_{A(b_1 u_1 + b_2 u_2)}[\frac{\lambda}{\alpha}]=
T_{b_1a_1 u_1 + b_2 a_2u_2}[\frac{\lambda}{\alpha}]=T_{u_1 + \frac{b_2 a_2}{b_1a_1}u_2}[\frac{b_1^2a_1^2\lambda}{\alpha}].
\end{equation*}
Note that since $\frac{a_1}{a_2}=\frac{a_1^2}{\alpha}\in L$ and
since $W_L=\langle u_1, u_2\rangle_L$ is $(L, AGA^{-1})$-rational,
it follows that $AT_{b_1 u_1 + b_2 u_2}[\lambda]A^{-1}\in AGA^{-1}$,
and therefore $T_{b_1 u_1 + b_2 u_2}[\lambda]\in G$.
Note that the same conclusion is valid for $b_1=0$ as $\langle u_2 \rangle_L$ is $(L, G)$-rational.

Finally it remains to see that if $U\in \calL(G)\cap \langle H_L \rangle_K$,
then there is $u\in U\cap H_L$ with $\calP_u(G)=L$.
Assume that $U\in \calL(G)\cap \langle H_L \rangle_K$. Since we have seen that
$\langle u_2\rangle_L$ is $(L, G)$-rational, we can assume that $U\not=\langle u_2\rangle_K$.
Therefore we can choose an element $v\in U$ with $v=u_1 + bu_2$, for some $b\in K$.
It suffices to show that $b\in L$. Let $T_v[\lambda]\in G$ be a transvection with direction $U$.
Then computing $AT_v[\lambda]A^{-1}$ as above, we get that
$AT_v[\lambda]A^{-1}=T_{u_1 + \frac{b a_2}{a_1}u_2}[\frac{a_1^2\lambda}{\alpha}]$
is a transvection with direction in $\calL(AGA^{-1})\cap W_L$,
hence the $(L, AGA^{-1})$-rationality of $W_L$ implies that $b\in L$.

(\ref{cor:mitchell:b}) follows from~(\ref{cor:mitchell:a})
by observing that the condition $u_1 \bullet u_2 \in L^\times$
ensures that $\langle u_1, u_2 \rangle_L$ is an $L$-rational symplectic plane.
\end{proof}

The next corollary says that the translate of each vector in an $(L,G)$-rational symplectic
space by some orthogonal vector~$w$ is the centre of a transvection if this is the case
for one of them.

\begin{cor}\label{cor:translate}
Let $H_L \subseteq V$ be an $(L,G)$-rational symplectic space. Let $w \in H_K^\perp$ and
$0 \neq h \in H_L$ such that $\langle h+w \rangle_K \in \calL(G)$.
Then $\langle h_1 + w \rangle_L$ is an $(L,G)$-rational line for all $0 \neq h_1 \in H_L$.
\end{cor}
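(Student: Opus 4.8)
The plan is to reduce the statement to Corollary~\ref{cor:mitchell}(\ref{cor:mitchell:b}). Fix $0 \neq h_1 \in H_L$. It suffices to exhibit an $(L,G)$-rational line $\langle u_1 \rangle_L$ with $u_1 \bullet (h_1+w) \in L^\times$, together with the fact that $\langle h_1 + w\rangle_K \in \calL(G)$: Corollary~\ref{cor:mitchell}(\ref{cor:mitchell:b}) then makes $\langle u_1, h_1+w\rangle_L$ an $(L,G)$-rational symplectic plane, and $\langle h_1+w\rangle_L$, being an $L$-subspace of it, is $(L,G)$-rational by Lemma~\ref{lem:rational-subs}.

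The key point is to show $\langle h_1 + w\rangle_K \in \calL(G)$. Since $H_L$ is $(L,G)$-rational, Lemma~\ref{lem:definition}(\ref{lem:definition:ii:a}) gives $T_v[\lambda] \in G$ for every $v \in H_L$ and $\lambda \in L$. Each such transvection preserves $H_L$ (here one uses that $\langle \cdot,\cdot\rangle$ is $L$-valued on $H_L$), and it fixes $w$, because $w \bullet v = 0$ for $w \in H_K^\perp$ and $v \in H_L \subseteq H_K$. Since the symplectic group of a symplectic space over a field is generated by its transvections and acts transitively on its nonzero vectors, there is an element $g$ in the subgroup of $G$ generated by these $T_v[\lambda]$ with $g(h) = h_1$; this $g$ also fixes $w$, so $g(h+w) = h_1 + w$. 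As $h+w \in \calL(G)$ and $g \in G$, Lemma~\ref{lem:fix} yields $h_1 + w \in \calL(G)$.

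It remains to produce $u_1$: since the restriction of $\langle \cdot,\cdot\rangle$ to $H_L$ is nondegenerate and $L$-valued and $h_1 \neq 0$, there is $u_1 \in H_L$ with $u_1 \bullet h_1 = 1$; by Lemma~\ref{lem:rational-subs} the line $\langle u_1\rangle_L \subseteq H_L$ is $(L,G)$-rational, and $u_1 \bullet (h_1 + w) = u_1 \bullet h_1 = 1 \in L^\times$ because $u_1 \in H_K$ and $w \in H_K^\perp$. This completes the reduction. The only step that is not pure bookkeeping with the definitions is the construction of $g$ in the second paragraph: the essential fact is that the transvections with direction in $H_L$ and parameter in $L$ — which are forced into $G$ by $(L,G)$-rationality — already fix $H_K^\perp$ pointwise and generate the full, hence vector-transitive, symplectic group of $H_L$.
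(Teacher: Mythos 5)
Your proof is correct, and it takes a genuinely different route from the one in the paper. The paper stays entirely inside the ``plane calculus'' of Corollary~\ref{cor:mitchell}: it first treats the case where $H_L$ is a plane by repeatedly forming $(L,G)$-rational planes $\langle \hat{h}, h+w\rangle_L$, $\langle \mu\hat{h}+h+w, h\rangle_L$, etc., reading off that $\langle \mu\hat{h}+(\nu+1)h+w\rangle_L$ is $(L,G)$-rational, and then removing the restriction $\mu\neq 0$ by swapping the roles of $h$ and $\hat{h}$; the general case is reduced to the plane case by finding a common $\hat{h}$ non-orthogonal to both $h$ and $h_1$. You instead exploit the group directly: Lemma~\ref{lem:definition}(\ref{lem:definition:ii:a}) forces all $T_v[\lambda]$ with $v\in H_L$, $\lambda\in L$ into $G$; these preserve $H_L$, fix $H_K^\perp$ pointwise, and restrict to generators of $\Sp(H_L)$, which acts transitively on $H_L\setminus\{0\}$, so Lemma~\ref{lem:fix} immediately moves the centre $h+w$ to $h_1+w$; a single application of Corollary~\ref{cor:mitchell}(\ref{cor:mitchell:b}) (with $u_1\in H_L$ dual to $h_1$) then upgrades $\langle h_1+w\rangle_K\in\calL(G)$ to $(L,G)$-rationality of the line $\langle h_1+w\rangle_L$. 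Your version is shorter, avoids the plane/general-space case split and the $\mu=0$ workaround, and makes explicit the transitivity that the paper's iterated plane construction encodes implicitly; the paper's version, on the other hand, uses only the two-dimensional machinery already on the table and never invokes generation of, or transitivity for, the full symplectic group of $H_L$. All the facts you use (Lemmas~\ref{lem:rational-lines}, \ref{lem:rational-subs}, \ref{lem:definition}, \ref{lem:fix} and Corollary~\ref{cor:mitchell}) are established before this corollary, so there is no circularity.
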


\begin{proof}
Assume first that $H_L$ is a plane. Let $\hat{h} \in H_L$ with $\hat{h} \bullet h = 1$ (hence $H_L=\langle h, \hat{h}\rangle_L$). As $\langle \hat{h} \rangle_L$
is an $(L,G)$-rational line and $\hat{h} \bullet (h+w) = 1$, it follows that
$\langle \hat{h}, h+w \rangle_L$ is an $(L,G)$-rational plane by Corollary~\ref{cor:mitchell}.
Consequently, for all $\mu \in L$ we have that $\langle \mu \hat{h} + h + w\rangle_L$
is an $(L,G)$-rational line.
Let now $\mu \in L^\times$. Then $(\mu \hat{h} + h + w) \bullet h = \mu \neq 0$,
whence again by Corollary~\ref{cor:mitchell} $\langle \mu \hat{h} + h + w, h \rangle_L$
is an $(L,G)$-rational plane. Thus, for all $\nu \in L$ it follows that
$\langle \mu \hat{h} + (\nu+1) h + w \rangle_L$ is an $(L,G)$-rational line.
In order to get rid of the condition $\mu \neq 0$, we exchange the roles of $h$ and~$\hat{h}$,
yielding the statement for planes.

To extend it to any symplectic space $H_L$, note that, if $h_1, h_2\in H_L$ are nonzero elements,
there exists an element $\hat{h}\in H_L$ such that $h_1\bullet \hat{h}\not=0$,
$h_2\bullet \hat{h}\not=0$. Namely, let $\hat{h}_1, \hat{h}_2$ be such that
$h_1\bullet \hat{h}_1\not=0$, $h_2\bullet \hat{h}_2\not=0$
(they exist because on $H_L$ the symplectic pairing is nondegenerate).
If $h_2\bullet \hat{h}_1\not=0$ or $h_1\bullet \hat{h}_2\not=0$, we are done.
Otherwise $\hat{h}=\hat{h}_1 + \hat{h}_2$ satisfies the required condition.

Returning to the proof, if $h_1\in H_L$ is nonzero, take $\hat{h} \in H_L$
such that $h\bullet \hat{h} \neq 0$ and $h_1\bullet \hat{h}\neq 0$.
First apply the Corollary to the plane $\langle h, \hat{h}\rangle_L$, yielding
that $\hat{h}+w$ is an $(L,G)$-rational line, and then apply it to the plane
$\langle \hat{h}, h_1\rangle_L$, showing that $h_1 + w$ is an $(L,G)$-rational
line, as required.
\end{proof}

In the next lemma it is important that the characteristic of~$K$ is greater than~$2$.

\begin{lem}\label{lem:construction}
Let $H_L$ be an $(L,G)$-rational symplectic space. Let $h, \tilde{h} \in H_L$
different from zero and let $w, \tilde{w} \in H_K^\perp$ such that
$w \bullet \tilde{w}  \in L^\times$
and $h+w, \tilde{h} + \tilde{w} \in \calL(G)$.

Then $\langle w, \tilde{w} \rangle_L$ is an $(L,G)$-rational symplectic plane.
\end{lem}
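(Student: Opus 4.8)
The plan is to reduce the statement to two applications of Corollary~\ref{cor:mitchell}, after first producing, with the help of Corollary~\ref{cor:translate}, enough $(L,G)$-rational lines. Since $w\bullet\tilde w\in L^\times$, the vectors $w$ and $\tilde w$ are nonzero and $K$-linearly independent, so $\langle w,\tilde w\rangle_L$ is already an $L$-rational symplectic plane; only its $(L,G)$-rationality is in question. The key observation is that, because the characteristic of $K$ is not $2$, we have $\langle w,\tilde w\rangle_L=\langle w-\tilde w,\,w+\tilde w\rangle_L$ and $(w-\tilde w)\bullet(w+\tilde w)=2(w\bullet\tilde w)\in L^\times$. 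Hence, once both $\langle w-\tilde w\rangle_L$ and $\langle w+\tilde w\rangle_L$ are known to be $(L,G)$-rational lines, Corollary~\ref{cor:mitchell}(\ref{cor:mitchell:b}), applied with $u_1=w-\tilde w$ and $u_2=w+\tilde w$, yields that $\langle w,\tilde w\rangle_L$ is an $(L,G)$-rational symplectic plane. This is the only step that uses the hypothesis on the characteristic.

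To produce those two lines I would argue as follows. By Corollary~\ref{cor:translate}, applied to $H_L$, the orthogonal vector $w$ and the nonzero element $h$, the line $\langle h+w\rangle_L$ is $(L,G)$-rational; applied to $H_L$, $\tilde w$ and $\tilde h$ (and using that $h$ and $-h$ are nonzero elements of $H_L$), the lines $\langle h+\tilde w\rangle_L$ and $\langle -h+\tilde w\rangle_L$ are $(L,G)$-rational. Then I pair these up: by Corollary~\ref{cor:mitchell}(\ref{cor:mitchell:b}) the $L$-spans $\langle h+w,\,h+\tilde w\rangle_L$ and $\langle h+w,\,-h+\tilde w\rangle_L$ are $(L,G)$-rational symplectic planes, and Lemma~\ref{lem:rational-subs} then gives that the lines $\langle (h+w)-(h+\tilde w)\rangle_L=\langle w-\tilde w\rangle_L$ and $\langle (h+w)+(-h+\tilde w)\rangle_L=\langle w+\tilde w\rangle_L$ are $(L,G)$-rational, which is exactly the input needed above.

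The computation that makes the pairing step legitimate is the vanishing of the cross terms between $H_K$ and $H_K^\perp$ combined with the alternating property: since $w,\tilde w\in H_K^\perp$, $h\in H_L\subseteq H_K$ and $h\bullet h=0$, one gets $(h+w)\bullet(h+\tilde w)=w\bullet\tilde w$ and $(h+w)\bullet(-h+\tilde w)=w\bullet\tilde w$, both in $L^\times$ by hypothesis, as Corollary~\ref{cor:mitchell}(\ref{cor:mitchell:b}) requires. I do not expect a genuine obstacle here; the point is conceptual rather than technical, namely to aim at the two ``diagonal'' directions $w\pm\tilde w$ instead of at $w$ and $\tilde w$ individually, since the planes that Corollary~\ref{cor:mitchell} manufactures from generators of the shape $(\pm h)+(\text{vector of }H_K^\perp)$ only contain such diagonal combinations of their generators. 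The only thing to keep track of is that each auxiliary object is genuinely an $(L,G)$-rational line, not just a line whose centre lies in $\calL(G)$; Corollary~\ref{cor:translate} is designed to supply exactly this, and in the final application of Corollary~\ref{cor:mitchell} the condition $\langle w+\tilde w\rangle_K\in\calL(G)$ follows immediately from $\langle w+\tilde w\rangle_L$ being $(L,G)$-rational.
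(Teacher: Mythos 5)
Your proof is correct and follows essentially the same route as the paper's: translate the $H$-components to $\pm h$ via Corollary~\ref{cor:translate}, use Corollary~\ref{cor:mitchell}(\ref{cor:mitchell:b}) to manufacture the two diagonal $(L,G)$-rational lines $\langle w-\tilde w\rangle_L$ and $\langle w+\tilde w\rangle_L$, and recombine them using $(w-\tilde w)\bullet(w+\tilde w)=2\,w\bullet\tilde w\in L^\times$. The only (immaterial) difference is which of the two linked vectors you negate before pairing.
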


\begin{proof}
By Corollary~\ref{cor:translate} we have that $\langle h + \tilde{w}\rangle_L$
is an $(L,G)$-rational line.
As $(h+w)\bullet (h+\tilde{w}) = w \bullet \tilde{w} \in L^\times$, by Corollary~\ref{cor:mitchell} it follows
that $\langle w - \tilde{w} \rangle_L$ is an $(L,G)$-rational line.
Since $\langle -h-w\rangle_K\in \calL(G)$, by Corollary \ref{cor:translate} we have that $\langle -h+w\rangle_L$ is $(L, G)$-rational, and from $(-h+w)\bullet (h+\tilde{w}) = w \bullet \tilde{w} \in L^\times$
we conclude that $\langle w + \tilde{w} \rangle_L$ is an $(L,G)$-rational line.
As $(w - \tilde{w})\bullet (w + \tilde{w}) = 2 w \bullet \tilde{w} \in L^\times$,
we obtain that $\langle w + \tilde{w}, w - \tilde{w}\rangle_L = \langle w, \tilde{w} \rangle_L$
is an $(L,G)$-rational symplectic plane, as claimed.
\end{proof}

We now deduce that linking is an equivalence relation between mutually orthogonal spaces.
Note that reflexivity and symmetry are clear and only transitivity need be shown.

\begin{lem}\label{lem:equivrel}
Let $H_L$, $I_L$ and $J_L$ be mutually orthogonal $(L,G)$-rational symplectic
subspaces of~$V$.

If $H_L$ and $I_L$ are $(L,G)$-linked and also $I_L$ and $J_L$ are $(L,G)$-linked,
then so are $H_L$ and $J_L$.
\end{lem}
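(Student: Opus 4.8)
The plan is to unwind the definition of $(L,G)$-linked in terms of concrete transvection centres and then produce the required linking vector for $H_L$ and $J_L$ by a direct construction, using Corollary~\ref{cor:translate} and Lemma~\ref{lem:construction} as the main tools. Since $H_L$ and $I_L$ are linked, there are $0 \neq h \in H_L$, $0 \neq w \in I_L$ with $h + w \in \calL(G)$; since $I_L$ and $J_L$ are linked, there are $0 \neq w' \in I_L$, $0 \neq j \in J_L$ with $w' + j \in \calL(G)$. Because $H_L$, $I_L$, $J_L$ are mutually orthogonal, $w$ and $w'$ both lie in the symplectic space $I_L$, and $H_L \subseteq I_K^\perp$ (so $h \in I_K^\perp$), and similarly $j \in I_K^\perp$. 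The goal is to find $0 \neq h_1 \in H_L$ and $0 \neq j_1 \in J_L$ with $h_1 + j_1 \in \calL(G)$.

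The key idea is to bring $h$ and $j$ into the picture via a common vector of $I_L$. First, I would apply Corollary~\ref{cor:translate} to the $(L,G)$-rational symplectic space $I_L$: taking $w \in I_L$ and the orthogonal vector $h \in I_K^\perp = H_K \subseteq I_K^\perp$ (more precisely $h+w\in\calL(G)$ with $h \perp I_K$), the corollary shows that $\langle w_2 + h \rangle_L$ is an $(L,G)$-rational line for every $0 \neq w_2 \in I_L$ — here I am using the roles with $I_L$ as the ``$H_L$'' of that corollary and $h$ as the orthogonal ``$w$''. In particular, choosing $w_2 = w'$, I get $\langle w' + h \rangle_K \in \calL(G)$. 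Symmetrically, applying Corollary~\ref{cor:translate} to $I_L$ with the orthogonal vector $j$ yields that $\langle w_2 + j \rangle_L$ is $(L,G)$-rational for all $0 \neq w_2 \in I_L$; choosing $w_2 = w'$ gives $\langle w' + j \rangle_K \in \calL(G)$ as well (which we already knew, but more importantly we now have it with the \emph{same} $w'$ as in $w'+h$).

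Now both $w' + h$ and $w' + j$ lie in $\calL(G)$, and $h \in H_L$, $j \in J_L$ are nonzero, $w' \in I_L$ is a common vector, with $H_L, J_L \perp I_L$ and $H_L \perp J_L$. I would then pass to the $(L,G)$-rational symplectic space $H_L \oplus J_L$ (orthogonal direct sum of symplectic spaces, hence itself $(L,G)$-rational by Lemma~\ref{lem:rational-subs} applied inside a larger $(L,G)$-rational space containing both — or directly: each is $(L,G)$-rational and they are orthogonal, so one checks $(L,G)$-rationality of the sum via Lemma~\ref{lem:definition}, noting $\calL(G)$ meets $(H\oplus J)_K$ only in $H_K \cup J_K$ is \emph{not} automatic, so instead I treat $H_L\oplus J_L$ as sitting inside a fixed maximal $(L,G)$-rational space, or simply invoke that the problem is local to $H_L$, $J_L$). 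With $h + w'$ and $j + w' \in \calL(G)$, $h \in H_L$, $j \in J_L$, and $w', w' \in I_K^\perp$ relative to $H_L \oplus J_L$ — but here $w \bullet w = 0$, so Lemma~\ref{lem:construction} does not apply directly with $\tilde w = w'$. The fix is to use a \emph{different} second linking vector: since $I_L$ has dimension $\geq 2$ (it is a nonzero symplectic space), pick $\hat{w} \in I_L$ with $w' \bullet \hat{w} \in L^\times$; applying Corollary~\ref{cor:translate} to $I_L$ with orthogonal vector $j$ and starting line $\langle w' + j\rangle_L$, I get that $\langle \hat{w} + j \rangle_L$ is $(L,G)$-rational, i.e.\ $\hat{w} + j \in \calL(G)$. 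Now I apply Lemma~\ref{lem:construction} to the $(L,G)$-rational symplectic space $I_L$: wait — that gives $(L,G)$-rationality of a plane \emph{inside} $I_K^\perp$, which is where $h$ and $j$ live. Precisely, with $h + w', j + \hat{w} \in \calL(G)$, $h,j \in$ the orthogonal complement of $I_L$ (call it $M_L \supseteq H_L \oplus J_L$), $w', \hat{w} \in I_L$, and $w' \bullet \hat{w} \in L^\times$, Lemma~\ref{lem:construction} shows $\langle h, j \rangle_L$ is an $(L,G)$-rational symplectic plane — and in particular $h + j \in \calL(G)$, so $H_L$ and $J_L$ are $(L,G)$-linked via $h \in H_L$, $j \in J_L$.

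The step I expect to be the main obstacle is the careful bookkeeping of which vector plays the role of ``$h$'', ``$w$'', ``$\tilde h$'', ``$\tilde w$'' in Lemma~\ref{lem:construction} and Corollary~\ref{cor:translate}, together with ensuring the relevant spaces ($I_L$, and the ambient $(L,G)$-rational symplectic space containing $H_L$, $I_L$, $J_L$ simultaneously so that ``$H_K^\perp$'' makes sense) are genuinely $(L,G)$-rational; in particular one must check $\dim_L I_L \geq 2$ so that the auxiliary vector $\hat{w}$ with $w' \bullet \hat{w} \in L^\times$ exists, which holds because a nonzero symplectic space has even dimension $\geq 2$. Once the roles are fixed, each individual claim is an immediate citation of the two corollaries and the lemma, with no further computation.
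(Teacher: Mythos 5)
Your setup and the two applications of Corollary~\ref{cor:translate} to $I_L$ are fine: from $h+w\in\calL(G)$ and $j+w'\in\calL(G)$ you legitimately obtain $w'+h\in\calL(G)$ and $\hat w+j\in\calL(G)$ with $w',\hat w\in I_L$ and $w'\bullet\hat w\in L^\times$. The gap is in the final step. Lemma~\ref{lem:construction} requires the condition $w\bullet\tilde w\in L^\times$ to hold for the \emph{two vectors lying in $H_K^\perp$ whose $L$-span is asserted to be a rational symplectic plane}; the auxiliary vectors $h,\tilde h\in H_L$ carry no pairing condition. So to conclude that $\langle h,j\rangle_L$ is an $(L,G)$-rational symplectic plane you would need $h\bullet j\in L^\times$ --- but $h\in H_L$ and $j\in J_L$ are orthogonal by hypothesis, so $h\bullet j=0$ and $\langle h,j\rangle$ is totally isotropic; it cannot be a symplectic plane at all. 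Assigning the roles the other way, so that the satisfied condition $w'\bullet\hat w\in L^\times$ is the one the lemma asks for, only returns that $\langle w',\hat w\rangle_L\subseteq I_L$ is rational, which says nothing about linking $H_L$ with $J_L$ (and would moreover require knowing that some rational symplectic space containing $h$ and $j$ exists, which is essentially what is being proved). You spotted the $w\bullet w=0$ obstruction yourself, but the repair you chose fixes the wrong pairing.

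What is actually needed is a single vector $h_1+j_1\in\calL(G)$ with $0\neq h_1\in H_L$, $0\neq j_1\in J_L$, and the missing idea is a cancellation: produce two $(L,G)$-rational lines whose $I_L$-components are \emph{equal} while their $H_L$-components pair nontrivially, and then use Corollary~\ref{cor:mitchell} (plus Lemma~\ref{lem:rational-subs}) to see that their difference spans a rational line, in which the $I_L$-part vanishes. Concretely, with $h_0+i_0,\ i_1+j_0\in\calL(G)$, pick $\hat h_0\in H_L$ and $\hat i_0\in I_L$ with $\hat h_0\bullet h_0=1$ and $\hat i_0\bullet i_0=1$; Corollary~\ref{cor:translate} gives the rational lines $\langle h_0+i_0\rangle_L$, $\langle\hat i_0+j_0\rangle_L$ and $\langle\hat h_0+(i_0+\hat i_0)\rangle_L$; since $(h_0+i_0)\bullet(\hat i_0+j_0)=i_0\bullet\hat i_0\in L^\times$, Corollary~\ref{cor:mitchell} makes $\langle h_0+(i_0+\hat i_0)+j_0\rangle_L$ rational, and pairing this against $\langle\hat h_0+(i_0+\hat i_0)\rangle_L$ (the pairing is $\hat h_0\bullet h_0=1$) a second application yields that $\langle(h_0-\hat h_0)+j_0\rangle_L$ is rational. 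Since $h_0-\hat h_0\in H_L$ is nonzero, this is the required link. Your intermediate configuration could be completed in the same spirit, but without this cancellation device the argument does not close.
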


\begin{proof}
By definition there exist nonzero $h_0 \in H_L$, $i_0,i_1 \in I_L$ and $j_0 \in J_L$
such that $h_0 + i_0 \in \calL(G)$ and $i_1 + j_0 \in \calL(G)$.
There are $\hat{h}_0 \in H_L$ and $\hat{i}_0 \in I_L$
such that $\hat{h}_0 \bullet h_0 = 1$ and $\hat{i}_0 \bullet i_0 = 1$.

By Corollary~\ref{cor:translate} we have, in particular, that
$\langle h_0 + i_0 \rangle_L$,
$\langle \hat{i}_0 + j_0\rangle_L$ and $\langle \hat{h}_0 + (i_0 + \hat{i}_0)\rangle_L$
are $(L,G)$-rational lines.
As $(h_0 + \hat{i}_0)\bullet(i_0 + j_0) = 1$, by Corollary~\ref{cor:mitchell}
also $\langle h_0 + (i_0 + \hat{i}_0) + j_0\rangle_L$ is $(L,G)$-rational.
Furthermore, due to
$(\hat{h}_0 + (i_0 + \hat{i}_0)) \bullet (h_0 + (i_0 + \hat{i}_0) + j_0) = 1$,
it follows that $\langle (h_0 - \hat{h}_0) + j_0 \rangle_L$ is $(L,G)$-rational,
whence $H_L$ and $J_L$ are $(L,G)$-linked.
\end{proof}

\section{Merging linked orthogonal $(L,G)$-rational symplectic subspaces}\label{sec:merge}

We continue using our assumptions: $K$ is a finite field of characteristic at least~$5$,
$L \subseteq K$ a subfield, $V$ a $n$-dimensional symplectic $K$-vector space,
$G \subseteq \GSp(V)$ a subgroup.
In the previous section we established the existence of $(L,G)$-rational symplectic
planes in many cases (after allowing a conjugation of $G$ inside $\GSp(V)$).
In this section we aim at merging $(L,G)$-linked $(L,G)$-rational symplectic planes
into $(L,G)$-rational symplectic subspaces.

It is important to remark that no new conjugation of $G$ is required. The only
conjugation that is needed is the one from the previous section in order to have
an $(L,G)$-rational plane to start from.

\begin{lem}\label{lem:generate}
Let $H_L$ and $I_L$ be two $(L,G)$-rational symplectic subspaces of~$V$
which are $(L,G)$-linked. Suppose that $H_L$ and $I_L$ are orthogonal to each other. Then all lines in $H_L \oplus I_L$ are $(L,G)$-rational.
\end{lem}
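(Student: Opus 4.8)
The plan is to bootstrap from the $(L,G)$-linkage datum to the full set of $(L,G)$-rational lines in $H_L \oplus I_L$, using Corollary~\ref{cor:translate} and Corollary~\ref{cor:mitchell} repeatedly, exactly in the spirit of the proof of Lemma~\ref{lem:equivrel}. By hypothesis there are nonzero $h_0 \in H_L$ and $i_0 \in I_L$ with $\langle h_0 + i_0\rangle_K \in \calL(G)$. First I would observe that $H_L$ is itself an $(L,G)$-rational symplectic space orthogonal to the vector $i_0 \in (H_K)^\perp$, so applying Corollary~\ref{cor:translate} (with $w = i_0$) gives that $\langle h + i_0\rangle_L$ is $(L,G)$-rational for every nonzero $h \in H_L$. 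Symmetrically, viewing $I_L$ as the ambient symplectic space and $h_0 \in (I_K)^\perp$, Corollary~\ref{cor:translate} yields that $\langle h_0 + i\rangle_L$ is $(L,G)$-rational for every nonzero $i \in I_L$.

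Next I would upgrade to mixed combinations with both coefficients free. Take nonzero $h \in H_L$, $i \in I_L$, and any scalars; I want to show $\langle h + i\rangle_L$ is $(L,G)$-rational. The idea is: we already know $\langle h + i_0\rangle_L$ and $\langle h_0 + i\rangle_L$ are $(L,G)$-rational lines (after rescaling so that the $H$- or $I$-components match — here rationality of $\langle v\rangle_L$ depends only on the line, and the scalars live in $L$). Pick the pairing cleverly: choose $\hat h \in H_L$ with $\hat h \bullet h \in L^\times$ and $\hat i \in I_L$ with $\hat i \bullet i \in L^\times$; then $(\hat h + \hat i)\bullet(h + i) = \hat h \bullet h + \hat i \bullet i$, which can be arranged to be nonzero, so Corollary~\ref{cor:mitchell}(b) combines a known $(L,G)$-rational line $\langle \hat h + \hat i\rangle_L$ (itself built from Corollary~\ref{cor:translate} applied inside $H_L \oplus I_L$ step by step, as in Lemma~\ref{lem:equivrel}) with $\langle h + i\rangle_K \in \calL(G)$ — but of course we don't yet know the latter is in $\calL(G)$; that is what we are proving. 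So instead the combination must be run the other way: start from lines already known to be in $\calL(G)$ and produce new ones. Concretely, from $\langle h_0 + i_0\rangle_L$ and a suitable $(L,G)$-rational line $\langle \hat h_0 + \hat i_0\rangle_L$ with pairing in $L^\times$, Corollary~\ref{cor:mitchell} gives that $\langle h_0 + \hat h_0 + i_0 + \hat i_0\rangle_L$ and more generally $\langle \mu(\hat h_0 + \hat i_0) + (h_0 + i_0)\rangle_L$ are $(L,G)$-rational for all $\mu \in L$; iterating and using Corollary~\ref{cor:translate} within each of $H_L$, $I_L$ to slide the components around, one reaches every line $\langle h + i\rangle_L$ with $h \in H_L$, $i \in I_L$ both nonzero.

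Finally I would dispose of the degenerate cases: lines entirely inside $H_L$ are $(L,G)$-rational by hypothesis (Lemma~\ref{lem:rational-subs}), likewise for $I_L$, so together with the mixed case every line of $H_L \oplus I_L$ is covered. The main obstacle is the bookkeeping in the middle step: one must be careful that every invocation of Corollary~\ref{cor:mitchell} is legitimate, i.e.\ that the two input lines genuinely have pairing in $L^\times$ and that one of them is already known to be $(L,G)$-rational (not merely in $\calL(G)$), and that Corollary~\ref{cor:translate} is only applied with $w$ lying in the orthogonal complement of the relevant $(L,G)$-rational symplectic space — here using crucially that $H_L \perp I_L$, so $I_K \subseteq (H_K)^\perp$ and vice versa. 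Choosing the auxiliary vectors $\hat h, \hat i$ with the two non-orthogonality conditions simultaneously (as in the last paragraph of the proof of Corollary~\ref{cor:translate}) is the one place where a small argument, rather than a direct citation, is needed; once the right sequence of four or five applications is written down, each individual step is immediate.
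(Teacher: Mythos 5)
Your first step coincides exactly with the paper's: from the link $h_0+i_0\in\calL(G)$, Corollary~\ref{cor:translate} applied to $H_L$ with $w=i_0\in H_K^\perp$ gives that $\langle h+i_0\rangle_L$ is $(L,G)$-rational for every nonzero $h\in H_L$. But your treatment of the mixed case $\langle h+i\rangle_L$ (both components nonzero and arbitrary) has a genuine gap. The one concrete mechanism you describe --- combining the two $(L,G)$-rational lines $\langle h+i_0\rangle_L$ and $\langle h_0+i\rangle_L$ via Corollary~\ref{cor:mitchell} --- only produces lines of the form $\langle \lambda(h+i_0)+\mu(h_0+i)\rangle_L$ with $\lambda,\mu\in L$, and no such combination equals $\langle h+i\rangle_L$ unless $h,h_0$ and $i,i_0$ are proportional; the subsequent ``iterating and sliding'' is announced but not carried out, and you concede as much (``once the right sequence of four or five applications is written down''). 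As written, the middle step therefore does not close.

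The fix is much simpler and is what the paper does: having established in your first step that $\langle h+i_0\rangle_K\in\calL(G)$ for the \emph{particular} nonzero $h\in H_L$ you are targeting, apply Corollary~\ref{cor:translate} a second time with the roles of the two spaces exchanged --- $I_L$ is the $(L,G)$-rational symplectic space, $h\in I_K^\perp$ plays the role of $w$, and $i_0\in I_L$ is the nonzero anchor with $\langle i_0+h\rangle_K\in\calL(G)$. This immediately yields that $\langle i+h\rangle_L$ is $(L,G)$-rational for every nonzero $i\in I_L$, and since $h$ was arbitrary you are done (the degenerate lines inside $H_L$ or $I_L$ are covered by Lemma~\ref{lem:rational-subs}, as you note). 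Your error was to anchor the second application at $h_0+i_0$ rather than at $h+i_0$; the ``sliding'' machinery you then reach for is essentially a re-derivation of the interior of Corollary~\ref{cor:translate}, which you could simply have invoked again.
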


\begin{proof}
The $(L,G)$-linkage implies the existence of $h_1 \in H_L$ and $w_1 \in I_L$
such that $\langle h_1+w_1 \rangle_K \in \calL(G)$.
By Corollary~\ref{cor:translate} $\langle h+w_1 \rangle_L$ is an $(L,G)$-rational line
for all $h \in H_L$. The same reasoning now gives that $\langle h+w \rangle_L$
is an $(L,G)$-rational line for all $h \in H_L$ and all $w \in I_L$.
\end{proof}

In view of Lemma~\ref{lem:definition} the above is (\ref{lem:definition:ii:a}). In order to
obtain (\ref{lem:definition:ii:b}), we need to invoke a result of Wagner. To make the exposition self-contained, we provide a proof in Appendix \ref{appendix:A}.

\begin{prop}\label{prop:Wagner}
Let $V$ be a $3$-dimensional vector space over a finite field $K$ of
characteristic $\ell\geq 5$, and let $G\subseteq \SL(V)$ be a group of
transformations fixing a $1$-dimensional vector space $U$. Let $U_1,
U_2, U_3$ be three distinct centres of transvections in $G$ such
that $U\not \subseteq U_1\oplus U_2$ and $U\not=U_3$. Then $(U_1 \oplus
U_2 ) \cap (U \oplus U_3)$ is the centre of a transvection of $G$.
\end{prop}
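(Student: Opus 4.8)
The plan is to reduce — via the quotient $V/U$ — to the two-dimensional situation governed by Dickson's classification, and then to lift the resulting transvection back to $V$.

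First I would use that $U\not\subseteq U_1\oplus U_2$ to write $V=U_1\oplus U_2\oplus U$ and fix a basis $e_1\in U_1$, $e_2\in U_2$, $e_3\in U$. Since every element of $G$ stabilises the line $U$, every transvection of $G$, being unipotent, fixes $U$ pointwise and therefore has its axis containing $U$; in particular a transvection of $G$ with centre $U_i$ ($i=1,2$) has axis $U\oplus U_i$, and $\tau_3$ has axis $A_3:=U\oplus U_3$, and in the non-trivial case these three axes are distinct. Writing $P:=U_1\oplus U_2$, the set $U_4=(U_1\oplus U_2)\cap(U\oplus U_3)=P\cap A_3$ is genuinely a line, since $P\neq A_3$ (because $U\subseteq A_3$ but $U\not\subseteq P$). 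If $U_4\in\{U_1,U_2,U_3\}$ there is nothing to prove, so I may assume otherwise; then a generator of $U_4$ has the form $p=\alpha e_1+\beta e_2$ with $\alpha,\beta\neq0$, one has $U_3=\langle p+se_3\rangle$ with $s\neq0$, and after rescaling $e_1,e_2$ I may take $\tau_1,\tau_2$ in standard unipotent form fixing $e_3$.

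Next I would pass to $\bar V:=V/U$. Each $\tau_i$ induces a transvection $\bar\tau_i$ of $\bar V$, and the crucial point is that $\bar\tau_1,\bar\tau_2$ have the two independent centres $\langle\bar e_1\rangle,\langle\bar e_2\rangle$ while $\bar\tau_3$ has centre $\bar U_4$, the image of $U_4$ — because $U_3$ and $U_4$ span the same plane $A_3$ together with $U$. Applying Lemma~\ref{lem:1911} to the image $\bar G$ of $G$ in $\GL(\bar V)$, the transvections of $\bar G$ are, after a conjugation fixing $\langle\bar e_1\rangle$ and $\langle\bar e_2\rangle$, precisely those with $L$-rational centre, for some subfield $L\subseteq K$; since $\bar\tau_3\in\bar G$ has centre $\bar U_4$, that centre is $L$-rational, and hence $\bar G$ contains a transvection $\bar g$ with centre $\bar U_4$. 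I then pick $g\in G$ mapping to $\bar g$ and analyse it: from $\det g=1=\det\bar g$ one gets that $g$ fixes $e_3$, and from the fact that $\bar g$ is a transvection with centre $\bar U_4$ one gets $(g-\id)(V)\subseteq A_3$ and $g(p)\in p+U$. Thus $g$ stabilises $A_3$, and $g|_{A_3}$ is a transvection of $A_3$ (or the identity) with centre $U$. Writing $g(p)=p+t_0e_3$ and $(g-\id)(w)=\nu p+\xi e_3$ for some $w\notin A_3$ (so $\nu\neq0$), the element $g$ is itself a transvection of $V$ with centre $U_4$ exactly when $t_0=0$ and $\xi=0$; and replacing $g$ by $g\sigma$ for a transvection $\sigma=\id+\psi(\,\cdot\,)e_3\in G$ with centre $U$ changes $(t_0,\xi)$ into $(t_0+\psi(p),\,\xi+\psi(w))$.

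Hence everything reduces to producing, inside $G$, a transvection with centre $U$ (and, more generally, transvections with centre on the various lines of $A_3$) whose parameters take the values needed to correct $g$. The hard part will be exactly this: a naive lift is typically not a transvection of $V$, and the correction has to be carried out inside $G$ rather than merely in $\bar G$. The idea is that the relevant parameter groups are not merely $\FF_\ell$-subspaces but $L$-subspaces — conjugating a transvection of $G$ whose centre lies in $A_3$ by a lift of an element of $\SL_2(L)\subseteq\bar G$ fixing $\bar U_4$, which acts on $\bar U_4$ through an arbitrary scalar of $L^\times$, multiplies its parameter by that scalar — and then a Mitchell‑type rationality argument carried out in the plane $A_3$, which is where the hypothesis that $K$ has characteristic at least $5$ is needed (just as in Proposition~\ref{prop:mitchell}), forces these $L$-subspaces to be large enough to contain the element needed to correct $g$. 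This then yields a transvection of $G$ with centre $\langle p\rangle=U_4$. The role of the third distinct centre $U_3$ (rather than only $U_1,U_2$) is precisely that it certifies, through the quotient, the $L$-rationality of $\bar U_4$ and thereby of $U_4$ inside $A_3$.
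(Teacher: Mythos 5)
Your reductions are sound: the case split on whether $(U_1\oplus U_2)\cap(U\oplus U_3)$ already equals one of the $U_i$, the observation that every transvection of $G$ has axis $U\oplus U_i$, the passage to $\overline{V}=V/U$, and the precise analysis of when a lift $g\in G$ of a transvection $\bar g$ with centre $\overline{U}_4$ is itself a transvection with centre $U_4$ (namely $t_0=\xi=0$) are all correct. But the proof then stops at exactly the point where the real content lies. To correct the pair $(t_0,\xi)$ you need a transvection $\sigma=\id+\psi(\cdot)e_3\in G$ \emph{with centre $U$} and with two prescribed values $\psi(p)=-t_0$, $\psi(w)=-\xi$. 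Nothing in the hypotheses gives you even a single nontrivial transvection of $G$ with centre $U$: the only centres you know are $U_1,U_2,U_3$. Your proposed fix --- ``a Mitchell-type rationality argument carried out in the plane $A_3$'' --- cannot get started, because Lemma~\ref{lem:1911} and all of Section~\ref{sec:existenceLG} require \emph{two} transvections with independent centres inside the plane in question, and inside $A_3=U\oplus U_3$ you know only one, namely $U_3$; moreover a transvection with centre $U_3$ has axis $A_3$, hence fixes $p$, and so can never correct $t_0$. Even granting some transvections with centre $U$, hitting two prescribed values of $\psi$ is a two-dimensional condition that the scaling-by-$L^\times$ remark does not address. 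So the central step is genuinely missing, not merely unpolished.

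For comparison, the paper closes this gap by a different mechanism that you may want to adopt. It never lifts from the quotient; instead it manufactures a transvection of $G$ with centre $U$ and axis $U\oplus U_1$ as a product of two involutions: the subgroups of $G$ generated by the transvections preserving $U_1\oplus U_2$, respectively $U_1\oplus U_3$, each contain (a conjugate of) $\SL_2$ over a subfield, hence contain the element acting as $-\id$ on that plane and as $+\id$ on $U$; call these $\delta_{1,2},\delta_{1,3}$. Their product $T=\delta_{1,2}\delta_{1,3}$ is a nontrivial transvection with centre $U$, a suitable power $T^k$ sends $u_3$ to $u_3-u$, and the conjugate $T^kT_3T^{-k}$ is then a transvection of $G$ whose centre is exactly $(U_1\oplus U_2)\cap(U\oplus U_3)$. (The paper also verifies that the two planes carry compatible $(L,\cdot)$-rational structures along the common line $\langle u_1\rangle_L$, via a computation with the common axis $U\oplus U_1$.) Some such construction --- producing an element of $G$ that moves $U_3$ into $U_1\oplus U_2$, or equivalently producing transvections with centre $U$ --- is indispensable, and it is the part your proposal does not supply.
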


\begin{prop}\label{prop:intersect}
Let $U_1, U_2, U_3 \in \calL(G)$ and $W = U_1+U_2+U_3$. Assume $\dim W = 3$, $U_1$ and $U_2$ not orthogonal and let $U$ be a line in $W \cap W^\perp$ which is linearly independent from~$U_3$ and is not contained in $U_1\oplus U_2$.
Then $(U_1 \oplus U_2) \cap (U \oplus U_3)$ is a line in $\calL(G)$.
\end{prop}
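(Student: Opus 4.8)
The strategy is to reduce to Wagner's result, Proposition~\ref{prop:Wagner}, by restricting $G$ to the $3$-dimensional space $W$. First I would observe that since $U_1, U_2, U_3$ span $W$ with $\dim W = 3$ and $U_1 \not\perp U_2$, the line $U := W \cap W^\perp$ is the radical of the restricted symplectic form on $W$ (a $3$-dimensional space carrying an alternating form always has a $1$-dimensional radical, and here it cannot be larger because $U_1 \oplus U_2$ is already nonsingular). Let $G_0 = \{ g \in G \mid g(W) \subseteq W\}$ and let $G|_W$ be the group of restrictions; by Corollary~\ref{cor:extend} (applied with the symplectic plane $H = U_1 \oplus U_2 \subseteq W$, noting that $W = H \oplus U$ and every element of $G_0$ stabilising $W$ preserves its radical $U$, so restriction behaves well), the transvections of $G|_W$ include the restrictions of the transvections of $G$ with centre $U_1$, $U_2$, $U_3$; in particular $U_1, U_2, U_3$ are centres of transvections in $G|_W$, and these transvections lie in $\SL(W)$ and fix $U$ pointwise (a symplectic transvection $T_v[\lambda]$ fixes $v^\perp$, and $U \subseteq W^\perp \subseteq v^\perp$ for every direction $v \in W$).

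Next I would verify the hypotheses of Proposition~\ref{prop:Wagner} for the group $G|_W \subseteq \SL(W)$: it fixes the $1$-dimensional space $U$ pointwise (hence certainly as a set), the three lines $U_1, U_2, U_3$ are distinct centres of transvections in $G|_W$ by the previous paragraph, and the additional conditions $U \not\subseteq U_1 \oplus U_2$ and $U \neq U_3$ are exactly what is assumed in the statement (the line $U$ is given to be linearly independent from $U_3$ and not contained in $U_1 \oplus U_2$). Wagner's proposition then yields that $(U_1 \oplus U_2) \cap (U \oplus U_3)$ is the centre of a transvection of $G|_W$. Note this intersection is indeed a line: $U_1 \oplus U_2$ is a plane in $W$, and $U \oplus U_3$ is a plane in $W$ (they are $2$-dimensional since $U \neq U_3$), so their intersection inside the $3$-dimensional $W$ has dimension at least $1$; it has dimension exactly $1$ because otherwise $U \subseteq U_1 \oplus U_2$, contradicting the hypothesis.

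Finally I would transfer this back to $G$ itself. Let $P := (U_1 \oplus U_2) \cap (U \oplus U_3)$, which we have shown is the centre of a transvection of $G|_W$, i.e.\ there is $g \in G_0$ whose restriction $g|_W$ is a transvection with centre $P$. The point is that $g$ itself need not be a transvection on $V$, so I cannot directly conclude $P \in \calL(G)$ from $g$. Instead I would argue as in the proof of Lemma~\ref{lem:extend}: applying Lemma~\ref{lem:1911} to the plane $H = U_1 \oplus U_2$ and the subgroup of $G|_H$-type restrictions, or more directly using that $U_1, U_2 \in \calL(G|_W)$ are nonorthogonal so the subgroup of $G|_W$ generated by transvections with centre in the plane $H$ contains a conjugate of $\SL_2$ acting transitively on the lines of $H$, together with Lemma~\ref{lem:extend} which already tells us $\calL(G|_W) \subseteq \calL(G)$. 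Since $P \in \calL(G|_W)$, this last containment gives $P \in \calL(G)$ immediately, completing the proof.

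I expect the main obstacle to be the careful bookkeeping in the first paragraph: one must check that restriction to $W$ really does send transvections of $G$ with these centres to transvections of $G|_W$ with the same centres, and that the radical $U$ of $W$ is fixed pointwise by all of $G_0$ — this uses crucially that $U \subseteq W^\perp$, so that any $g$ preserving $W$ and the form up to scalar preserves $W^\perp \cap W = U$, but pointwise fixing requires that the transvections in play have directions in $W$, hence act trivially on $W^\perp$. Once the setup is clean, the invocation of Proposition~\ref{prop:Wagner} and the transfer via Lemma~\ref{lem:extend} are routine.
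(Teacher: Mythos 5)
Your proposal follows essentially the same route as the paper: restrict to the $3$-dimensional space $W$, invoke Proposition~\ref{prop:Wagner}, and transfer the resulting centre back to $\calL(G)$ via the symplectic plane $U_1\oplus U_2$ and Lemma~\ref{lem:extend}. Two points need tightening, though. First, Proposition~\ref{prop:Wagner} cannot be applied to all of $G|_W$ as you state: an arbitrary element of $G_0$ preserves $U=W\cap W^\perp$ only as a set (and need not even restrict to an element of $\SL(W)$), so the hypothesis that $U$ is fixed \emph{pointwise} fails for $G|_W$; you must instead apply Wagner to the subgroup of $\SL(W)$ generated by the restrictions of three fixed transvections $T_i\in G$ with centres $U_i$ (these do fix $U$ pointwise since $U\subseteq W^\perp\subseteq U_i^\perp$), which is exactly what the paper does and what your closing remarks gesture at. Second, in the transfer step, Lemma~\ref{lem:extend} does not give $\calL(G|_W)\subseteq\calL(G)$ (it applies to a symplectic \emph{plane}, and $W$ is $3$-dimensional and degenerate); the correct argument is that the transvection of $W$ with centre $P=(U_1\oplus U_2)\cap(U\oplus U_3)$ preserves $U_1\oplus U_2$ and restricts to a \emph{nontrivial} transvection there (its axis contains $U$, hence cannot equal $U_1\oplus U_2$), so that $P\in\calL(G|_{U_1\oplus U_2})\subseteq\calL(G)$ by Lemma~\ref{lem:extend} applied to that plane. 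With these repairs your argument coincides with the paper's proof.
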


\begin{proof}
Fix transvections $T_i\in G$ with centre $U_i$, $i=1, 2, 3$. These transvections fix $W$; let $H\subseteq \SL(W)$ be the group generated by the restrictions of the $T_i$ to $W$.  The condition $U \subseteq W^\perp$ guarantees that the $T_i$ fix~$U$ pointwise. Note that furthermore $U\not=U_3$ and $U\not\subseteq U_1\oplus U_2$. We can apply Proposition \ref{prop:Wagner}, and conclude that $(U_1\oplus U_2)\cap (U\oplus U_3)$ is the centre of a transvection $T$ of $H$. This transvection fixes the symplectic plane $U_1\oplus U_2$. Call $T_0$ the restriction of $T$ to this plane. It is a nontrivial transvection (since no line of $U_1\oplus U_2$ can be orthogonal to all $U_1\oplus U_2$). Hence by Lemma \ref{lem:extend} the line $(U_1\oplus U_2)\cap (U\oplus U_3)$ belongs to $\calL(G)$.
\end{proof}

We now deduce rationality statements from it.

\begin{cor}\label{cor:intersect}
Let $H_L$ be an $(L,G)$-rational symplectic plane and $U_3$ and $U_4$ be linearly
independent lines not contained in $H_K$.
Assume $U_4\subseteq H_K \oplus U_3$ is orthogonal to $H_K$ and to $U_3$
and assume that $U_3 \in \calL(G)$.

Then the intersection $H_K \cap (U_3 \oplus U_4) = I_K$ for some
line $I_L \subseteq H_L$.
\end{cor}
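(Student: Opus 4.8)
The plan is to apply Proposition~\ref{prop:intersect} after picking a convenient symplectic $L$-basis of $H_L$, and then read off the rational line from the $(L,G)$-rationality of $H_L$ via Lemma~\ref{lem:definition}.

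First I would fix a symplectic $L$-basis $\{u_1,u_2\}$ of the plane $H_L$ with $u_1\bullet u_2=1$ and set $U_1=\langle u_1\rangle_K$, $U_2=\langle u_2\rangle_K$, so that $U_1\oplus U_2=H_K$. Since $H_L$ is $(L,G)$-rational, Lemma~\ref{lem:definition}~(\ref{lem:definition:ii:a}) gives $T_{u_i}[\lambda]\in G$ for every $\lambda\in L$, hence $U_1,U_2\in\calL(G)$; moreover $U_1$ and $U_2$ are not orthogonal because $u_1\bullet u_2=1$.

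Next I would verify the hypotheses of Proposition~\ref{prop:intersect} with this $U_1,U_2$, the given $U_3$, and the auxiliary line taken to be $U:=U_4$. Put $W=U_1+U_2+U_3=H_K\oplus U_3$; since $U_3\not\subseteq H_K$ this has dimension~$3$. By assumption $U_4\subseteq H_K\oplus U_3=W$, and $U_4\perp H_K$ together with $U_4\perp U_3$ yields $U_4\perp(H_K+U_3)=W$, so $U_4\subseteq W\cap W^\perp$. Finally $U_4$ is linearly independent from $U_3$ and $U_4\not\subseteq H_K=U_1\oplus U_2$ by hypothesis. Proposition~\ref{prop:intersect} then gives that
$$ I_K\ :=\ (U_1\oplus U_2)\cap(U_4\oplus U_3)\ =\ H_K\cap(U_3\oplus U_4) $$
is a line lying in $\calL(G)$.

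It remains to produce the $(L,G)$-rational line inside $H_L$. Since $I_K\in\calL(G)$ and $I_K\subseteq H_K$, and $H_L$ is $(L,G)$-rational, Lemma~\ref{lem:definition}~(\ref{lem:definition:ii:b}) provides a nonzero $u\in I_K\cap H_L$ with $\calP_u(G)=L$; by Lemma~\ref{lem:rational-lines} the line $I_L:=\langle u\rangle_L$ is $(L,G)$-rational, it is contained in $H_L$, and $\langle I_L\rangle_K=\langle u\rangle_K=I_K$ because $I_K$ is one-dimensional and contains the nonzero vector $u$. This is exactly the assertion. I do not expect a genuine obstacle here: the only points needing care are the two ``geometric'' checks $\dim W=3$ and $U_4\subseteq W^\perp$ (i.e.\ rephrasing ``orthogonal to $H_K$ and to $U_3$'' as orthogonality to all of $W$), which then feed directly into Proposition~\ref{prop:intersect}; the real content is already contained in that proposition and in Wagner's Proposition~\ref{prop:Wagner}.
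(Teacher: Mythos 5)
Your proof is correct and follows essentially the same route as the paper: choose two $(L,G)$-rational lines spanning $H_L$, apply Proposition~\ref{prop:intersect} with $U=U_4$, and then use the $(L,G)$-rationality of $H_L$ (via Lemma~\ref{lem:definition}) to rationalise the resulting line of $\calL(G)$ inside $H_K$. Your write-up merely spells out the hypothesis checks for Proposition~\ref{prop:intersect} that the paper leaves implicit.
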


\begin{proof}
Choose two $(L,G)$-rational lines $U_{1,L}$ and $U_{2,L}$ such that
$H_L = U_{1,L} \oplus U_{2,L}$. With $U = U_4$ we can apply Proposition~\ref{prop:intersect}
in order to obtain that $I := H_K \cap (U_3 \oplus U_4)$ is a line in~$\calL(G)$
contained in~$H_K$.
As $H_L$ is $(L,G)$-rational, it follows that $I$ is $(L,G)$-rationalisable.
\end{proof}

\begin{cor}\label{cor:intersect-app}
Let $H_L \subseteq V$ be an $(L,G)$-rational symplectic space.
Let $h + w \in \calL(G)$ with $0 \neq h \in H_K$ and $w \in H_K^\perp$.
Then $h \in \calL(G)$.
In particular, $\langle h \rangle_K$ is an $(L,G)$-rationalisable line, i.e.\
there is $\mu \in K^\times$ such that $\mu h \in H_L$.
\end{cor}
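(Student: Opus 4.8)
\textbf{Proof proposal for Corollary~\ref{cor:intersect-app}.}

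The plan is to reduce to Corollary~\ref{cor:intersect} by producing an auxiliary line $U_3$ playing the role of the external direction. Write $h + w \in \calL(G)$ with $0 \neq h \in H_K$ and $w \in H_K^\perp$. If $w = 0$ there is nothing to prove, so assume $w \neq 0$. Set $U_3 := \langle h + w \rangle_K \in \calL(G)$; this line is not contained in $H_K$ since $w \neq 0$. Now I want to manufacture a line $U_4 \subseteq H_K \oplus U_3$ that is orthogonal to both $H_K$ and $U_3$ and linearly independent from $U_3$. The natural candidate is $U_4 := \langle w \rangle_K = \langle (h+w) - h\rangle_K$, which lies in $H_K \oplus U_3$, is orthogonal to $H_K$ (as $w \in H_K^\perp$), and is orthogonal to $U_3$ because $(h+w)\bullet w = h\bullet w + w \bullet w = 0$ using $w \in H_K^\perp$ and alternation. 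It is linearly independent from $U_3$ precisely because $h \neq 0$. Hence the hypotheses of Corollary~\ref{cor:intersect} are met, and we conclude $H_K \cap (U_3 \oplus U_4) = I_K$ for some line $I_L \subseteq H_L$.

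It remains to identify $I_K$ with $\langle h \rangle_K$. Since $U_3 \oplus U_4 = \langle h+w, w\rangle_K = \langle h, w \rangle_K$, the vector $h$ lies in $U_3 \oplus U_4$, and $h \in H_K$ by hypothesis, so $\langle h \rangle_K \subseteq H_K \cap (U_3 \oplus U_4) = I_K$. As $I_K$ is a line, this forces $I_K = \langle h \rangle_K$. Therefore $\langle h \rangle_K = I_K \in \calL(G)$, i.e.\ $h \in \calL(G)$ up to scaling, which is exactly the first assertion. Moreover $I_K$ is $(L,G)$-rationalisable with witness $I_L \subseteq H_L$, so there is a nonzero $h' \in I_L = I_K \cap H_L$, and since $I_K = \langle h \rangle_K$ we may write $h' = \mu^{-1} h$ for some $\mu \in K^\times$, giving $\mu h = \mu \cdot \mu h' \in H_L$ after relabelling — more precisely, choosing $h' \in I_L \setminus\{0\}$ and $\mu \in K^\times$ with $h' = \mu h$ yields $\mu h = h' \in H_L$.

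The only point requiring a little care is the verification that $U_4$ is orthogonal to $U_3$: this uses that the form is alternating (so $w \bullet w = 0$), which is where the symplectic hypothesis enters, and it is the step where one must be sure not to accidentally need $\langle h \rangle$ or $\langle w \rangle$ to be nonsingular — they need not be, and indeed $\langle w \rangle$ is typically totally isotropic here. I do not expect any genuine obstacle; the content is entirely in setting up the right $U_3$ and $U_4$ so that Corollary~\ref{cor:intersect} applies verbatim.
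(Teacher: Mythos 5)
Your choice of $U_3 = \langle h+w\rangle_K$ and $U_4 = \langle w\rangle_K$, the verification of the orthogonality and independence hypotheses, and the identification $H_K \cap (U_3\oplus U_4) = \langle h\rangle_K$ coincide exactly with the paper's argument. The gap is in the claim that Corollary~\ref{cor:intersect} ``applies verbatim'': that corollary is stated, and proved, only for an $(L,G)$-rational symplectic \emph{plane}, whereas the $H_L$ of Corollary~\ref{cor:intersect-app} is a symplectic space of arbitrary dimension. The restriction to planes is not cosmetic. The proof of Corollary~\ref{cor:intersect} writes $H_L = U_{1,L}\oplus U_{2,L}$ as a sum of two lines and applies Proposition~\ref{prop:intersect} to the three-dimensional space $W = U_1+U_2+U_3$; Proposition~\ref{prop:intersect} in turn rests on Wagner's theorem, which is specifically a statement about transvections of a $3$-dimensional space. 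When $\dim_L H_L > 2$ none of this applies as stated, so you are invoking a result outside its hypotheses.

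The paper repairs this by opening the proof with a reduction to the plane case (``replacing $H_L$ by an $(L,G)$-rational plane contained in $H_L$, we may assume that $H_L$ is a plane''), and you need some such step. Be aware that this reduction is precisely where the remaining care is required: if you shrink $H_L$ to a plane $P_L$, the decomposition of $h+w$ relative to $P_K\oplus P_K^\perp$ becomes $h_P + w_P$ with $h_P$ the component of $h$ in $P_K$, so the plane case only yields that $h_P$ spans a line of $\calL(G)$ rationalisable in $P_L$; to recover the assertion about $h$ itself one has to choose the plane compatibly with $h$ (or assemble the conclusion from several planes). Everything else in your write-up is correct and identical to the paper's proof.
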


\begin{proof}
If necessary replacing $H_L$ by any $(L,G)$-rational plane contained in~$H_L$,
we may without loss of generality assume that $H_L$ is an $(L,G)$-rational plane.
Let $y := h + w$. If $w = 0$, the claim follows from the $(L,G)$-rationality of~$H_L$.
Hence, we suppose $w \neq 0$.
Then $U_3 := \langle y \rangle_K$ is not contained in $H_K$.
Note that $w$ is perpendicular to $U_3$ and to $H_K$, and $w\in H_k\oplus \langle  y\rangle_K$. Hence, Corollary~\ref{cor:intersect}
gives that the intersection $H_K \cap (U_3 \oplus \langle w\rangle_K) = \langle h \rangle_K$ is in~$\calL(G)$.
\end{proof}

Corollary~\ref{cor:intersect-app} gives the rationalisability of a line.
In order to actually find a direction vector for a parameter in~$L$,
we need something extra to rigidify the situation. For this, we now take
a second link which is sufficiently different from the first link.

\begin{cor}\label{cor:rigid}
Let $H_L \subseteq V$ be an $(L,G)$-rational symplectic space.
Let $0 \neq \tilde{h} \in H_K$ and $\tilde{w} \in H_K^\perp$ such
that $\tilde{h}+\tilde{w} \in \calL(G)$.
Suppose that there are nonzero $h \in H_L$ and $w \in H_K^\perp$
such that $h+w \in \calL(G)$ and $w \bullet \tilde{w} \in L^\times$.

Then $\tilde{h} \in H_L$.
\end{cor}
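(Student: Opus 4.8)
The plan is to combine the rationalisability statement of Corollary~\ref{cor:intersect-app} with the rigidity provided by the second link and by Lemma~\ref{lem:construction}. By Corollary~\ref{cor:intersect-app} applied to $\tilde{h}+\tilde{w}\in\calL(G)$, we know $\langle\tilde h\rangle_K\in\calL(G)$ and there is $\mu\in K^\times$ with $\mu\tilde h\in H_L$; the task is to show $\mu$ can be taken in $K$ such that $\mu=1$ works, i.e.\ $\tilde h$ itself already lies in $H_L$. Without loss of generality (replacing $H_L$ by an $(L,G)$-rational plane containing a nonzero multiple of $\tilde h$ and the vector $h$, using Lemma~\ref{lem:rational-subs} and the fact that planes suffice) we may reduce to the case where $\dim_L H_L=2$ and $h,\mu\tilde h$ span $H_L$ over $L$.

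Next I would invoke Lemma~\ref{lem:construction}: since $w\bullet\tilde w\in L^\times$ and $h+w,\tilde h+\tilde w\in\calL(G)$ with $h\in H_L$ and $\tilde h\in H_K$ nonzero, the lemma (after checking its hypotheses, which only require $h,\tilde h\in H_L$ — here is where I must be careful, since a priori only $\mu\tilde h\in H_L$) yields that $\langle w,\tilde w\rangle_L$ is an $(L,G)$-rational symplectic plane. The point is then to feed this back: with $\langle w\rangle_L$ and $\langle\tilde w\rangle_L$ now $(L,G)$-rational lines, Corollary~\ref{cor:translate} (or Corollary~\ref{cor:mitchell}) applied in the space $H_L\oplus\langle w,\tilde w\rangle_L$ lets us track how $\tilde h+\tilde w$ decomposes: subtracting the $(L,G)$-rational contribution $\langle\tilde w\rangle_L$ forces $\langle\tilde h\rangle_L$ to be $(L,G)$-rational, hence $\tilde h\in H_L$ up to $L^\times$, and then comparing with $\mu\tilde h\in H_L$ gives $\mu\in L^\times$, so $\tilde h\in H_L$ as desired.

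The main obstacle is the bootstrapping issue: Lemma~\ref{lem:construction} as stated wants both $h$ and $\tilde h$ in $H_L$, whereas at the outset only $h\in H_L$ is given and $\tilde h$ is merely in $H_K$. I expect the resolution to be that one applies Corollary~\ref{cor:intersect-app} first to replace $\tilde h$ by a vector $\mu\tilde h\in H_L$, runs Lemma~\ref{lem:construction} with the pair $(h,\mu\tilde h)$ and the pair of orthogonal vectors $(w,\tilde w)$ — noting $w\bullet\tilde w\in L^\times$ is unaffected and $\mu\tilde h+\mu\tilde w\in\calL(G)$ needs $\mu\tilde w$, which is fine since $H_K^\perp$ is a $K$-space and $\langle\tilde h+\tilde w\rangle_K\in\calL(G)$ implies $\langle\mu\tilde h+\mu\tilde w\rangle_K\in\calL(G)$ — and concludes $\langle w,\tilde w\rangle_L$ is $(L,G)$-rational. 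Once that plane is rational, Corollary~\ref{cor:translate} applied with the rational line $\langle\tilde w\rangle_L\subseteq\langle w,\tilde w\rangle_L$ and the link $\tilde h+\tilde w\in\calL(G)$ (viewing $\tilde w$ as the ``$H_L$-part'' in the roles of that corollary, with $\tilde h$ orthogonal to $\langle w,\tilde w\rangle_L$) shows $\langle\tilde h+\tilde w - \tilde w\rangle_L=\langle\tilde h\rangle_L$ is $(L,G)$-rational, whence $\tilde h$ is a scalar multiple of a vector of $H_L$; matching scalars with $\mu\tilde h\in H_L$ finishes it. I would double-check the orthogonality bookkeeping (that $\tilde h\perp\langle w,\tilde w\rangle_L$, which holds since $\tilde h\in H_K$ and $w,\tilde w\in H_K^\perp$) as the one genuinely delicate point.
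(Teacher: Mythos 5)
There is a genuine gap, and it sits exactly at the point you flag as ``the main obstacle''. Your plan is to rescale the second link so that its $H$-component lands in $H_L$: you replace $\tilde h+\tilde w$ by $\mu\tilde h+\mu\tilde w$ with $\mu\tilde h\in H_L$ and then feed the pair $(h,\mu\tilde h)$ and the orthogonal vectors into Lemma~\ref{lem:construction}. But Lemma~\ref{lem:construction} needs the link to be of the form $\tilde h'+\tilde w'$ with $\tilde h'\in H_L$ \emph{and} $w\bullet\tilde w'\in L^\times$; after rescaling, $\tilde w'=\mu\tilde w$ and $w\bullet\tilde w'=\mu\,(w\bullet\tilde w)$, which lies in $L^\times$ if and only if $\mu\in L^\times$ --- and $\mu\in L^\times$ is precisely the statement $\tilde h\in H_L$ that you are trying to prove. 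Your assertion that ``$w\bullet\tilde w\in L^\times$ is unaffected'' is false: the hypothesis on the pairing of the orthogonal parts is not scale-invariant, and this non-invariance is the entire content of the corollary (which upgrades the mere rationalisability of $\langle\tilde h\rangle_K$ from Corollary~\ref{cor:intersect-app} to an actual membership $\tilde h\in H_L$). The second half of your argument also leans on an invalid operation: neither Corollary~\ref{cor:translate} nor Corollary~\ref{cor:intersect-app} lets you ``subtract the rational contribution $\langle\tilde w\rangle_L$'' from $\tilde h+\tilde w$ to conclude that $\langle\tilde h\rangle_L$ is rational; the former only replaces the in-space component by another nonzero element of the rational space, and the latter only identifies the $K$-line of a component, not an $L$-structure on it.

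The missing idea (which is how the paper proceeds) is to keep $\tilde h+\tilde w$ unrescaled and instead exploit the symplectic pairing of the two links: writing $\beta\tilde h\in H_L$ and normalising $h\bullet(\beta\tilde h)=1$, one computes $(h+w)\bullet(\tilde h+\tilde w)=\tfrac1\beta+c$ with $c=w\bullet\tilde w\in L^\times$. If this is $0$ one is done; otherwise Corollary~\ref{cor:mitchell}~(b) makes $\langle h+w,\ \mu^{-1}(\tilde h+\tilde w)\rangle_L$ an $(L,G)$-rational plane for $\mu=\tfrac1\beta+c$, so $\langle h+w+\mu^{-1}(\tilde h+\tilde w)\rangle_L$ is a rational line, and Corollary~\ref{cor:intersect-app} applied to its $H_K$-component $h+\mu^{-1}\tilde h$ produces a scalar $\nu$ with $\nu(h+\mu^{-1}\tilde h)\in H_L$; comparing coordinates in the $L$-basis $\{h,\beta\tilde h\}$ forces $\nu\in L^\times$, then $\mu\in L$, then $\beta\in L$. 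So the hypothesis $w\bullet\tilde w\in L^\times$ enters through the pairing computation, not through Lemma~\ref{lem:construction}; without that step your argument cannot close.
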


\begin{proof}
By Corollary~\ref{cor:intersect-app} there is some $\beta \in K^\times$
such that $\beta \tilde{h} \in H_L$. We want to show $\beta \in L$.
By Corollary~\ref{cor:translate} we may assume that $h\bullet \tilde{h}\neq 0$, more precisely, $h \bullet (\beta \tilde{h}) = 1$; and we have furthermore that $\langle h + w \rangle_L$
is an $(L,G)$-rational line.
By Corollary~\ref{cor:mitchell}~(\ref{cor:mitchell:b}), $\langle h, \beta \tilde{h} \rangle_L$
is an $(L,G)$-rational symplectic plane contained in~$H_L$.
Let $c := w \bullet \tilde{w} \in L^\times$. We have
$$(h+w)\bullet(\tilde{h} + \tilde{w})
= h\bullet \tilde{h} + w \bullet \tilde{w}
= \frac{1}{\beta} + c =: \mu.$$
If $\mu = 0$, then $\beta \in L$ and we are done. Assume $\mu \neq 0$.
By Corollary~\ref{cor:mitchell}~(\ref{cor:mitchell:b}) it follows that
$\langle h+w, \mu^{-1}(\tilde{h}+\tilde{w}) \rangle_L$
is an $(L,G)$-rational symplectic plane.
Thus, $\langle h+w + \mu^{-1}(\tilde{h}+\tilde{w}) \rangle_L$
is an $(L,G)$-rational line.
By Corollary~\ref{cor:intersect-app} there is some $\nu \in K^\times$
such that $\nu(h + \mu^{-1} \tilde{h}) \in H_L.$
Consequently, $\nu \in L^\times$, whence $\mu\in L$, so that $\beta \in L$.
\end{proof}

The main result of this section is the following merging result.

\begin{prop}\label{prop:link}
Let $H_L$ and $I_L$ be orthogonal $(L,G)$-rational symplectic subspaces of~$V$
that are $(L,G)$-linked.

Then $H_L \oplus I_L$ is an $(L,G)$-rational symplectic subspace of~$V$.
\end{prop}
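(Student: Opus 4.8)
The plan is to verify the two conditions of Lemma~\ref{lem:definition} for the $L$-rational symplectic subspace $H_L \oplus I_L$. Condition~(\ref{lem:definition:ii:a}), namely $T_{H_L \oplus I_L}[L] \subseteq G$, will follow almost immediately: Lemma~\ref{lem:generate} already tells us that \emph{every} line in $H_L \oplus I_L$ is $(L,G)$-rational, and by Corollary~\ref{cor:mitchell}~(\ref{cor:mitchell:b}) (applied twice, choosing a vector in $H_L \oplus I_L$ with nonzero pairing against a given direction vector) any two such lines that are non-orthogonal span an $(L,G)$-rational symplectic plane, hence all transvections $T_v[\lambda]$ with $v \in H_L \oplus I_L$, $\lambda \in L$ lie in~$G$; the orthogonal case is handled by passing through an auxiliary third line, exactly as in the last paragraph of the proof of Corollary~\ref{cor:translate}.

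The substance of the argument is condition~(\ref{lem:definition:ii:b}): given any $U \in \calL(G)$ with $U \subseteq (H_L \oplus I_L)_K = H_K \oplus I_K$, we must produce $u \in U \cap (H_L \oplus I_L)$ with $\calP_u(G) = L$. Write a direction vector of $U$ as $v = h + w$ with $h \in H_K$ and $w \in I_K$ (decomposition along the orthogonal direct sum $H_K \oplus I_K$). If $w = 0$ then $v \in H_K$ and we are done by $(L,G)$-rationality of $H_L$; similarly if $h = 0$. So assume both are nonzero. Viewing $w \in H_K^\perp$, Corollary~\ref{cor:intersect-app} applied to $H_L$ gives $h \in \calL(G)$ and a scalar making $h$ lie in $H_L$ after rescaling; symmetrically (viewing $h \in I_K^\perp$) Corollary~\ref{cor:intersect-app} applied to $I_L$ gives $w$ rationalisable into~$I_L$. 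Thus, after rescaling $v$, we may assume $h \in H_L$ and $w = \beta w_0$ for some $w_0 \in I_L$ and $\beta \in K^\times$, and it remains to show $\beta \in L$.

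To pin down $\beta$ I would invoke the rigidifying Corollary~\ref{cor:rigid}, which is tailored for precisely this: it says that if $\tilde h + \tilde w \in \calL(G)$ with $\tilde h \in H_K$, $\tilde w \in H_K^\perp$, and if there is a \emph{second} link $h' + w' \in \calL(G)$ with $h' \in H_L$, $w' \in H_K^\perp$ and $w' \bullet \tilde w \in L^\times$, then $\tilde h \in H_L$. Here I apply it with the roles of $H_L$ played by $I_L$: set $\tilde h = w = \beta w_0 \in I_K$ and $\tilde w = h \in I_K^\perp$ (using $h \in H_L \subseteq H_K \perp I_K$), so that $\tilde h + \tilde w = v \in \calL(G)$; I then need an auxiliary link $i_1 + h_1 \in \calL(G)$ with $i_1 \in I_L$, $h_1 \in H_K^{\phantom{\perp}}$... — more precisely with $i_1 \in I_L$, $h_1 \in I_K^\perp$, and $h_1 \bullet h \in L^\times$. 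Such a link is manufactured from the original $(L,G)$-linkage of $H_L$ and $I_L$ together with Corollary~\ref{cor:translate}: the linkage gives some $h_1 + i_1 \in \calL(G)$ with $0 \neq h_1 \in H_L$, $0 \neq i_1 \in I_L$, and Corollary~\ref{cor:translate} lets me replace $h_1$ by any nonzero element of $H_L$, in particular by one with $h_1 \bullet h \in L^\times$ (possible since the pairing on $H_L$ is nondegenerate and $h \in H_L$ is nonzero). Feeding this into Corollary~\ref{cor:rigid} yields $\beta w_0 \in I_L$, hence $\beta \in L$, hence $v \in H_L \oplus I_L$ with $\calP_v(G) = L$ by Lemma~\ref{lem:rational-lines}. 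The main obstacle is bookkeeping: checking that the orthogonality hypotheses of Corollaries~\ref{cor:intersect-app} and~\ref{cor:rigid} are met under the swap $H_L \leftrightarrow I_L$ (everything works because $H_K$ and $I_K$ are mutually orthogonal, so $I_K \subseteq H_K^\perp$ and $H_K \subseteq I_K^\perp$), and making sure the two links used are genuinely ``different'' in the sense the rigidity corollary requires (the condition $w' \bullet \tilde w \in L^\times$), which is exactly what the freedom in Corollary~\ref{cor:translate} buys us. Finally the symplectic non-degeneracy of $H_L \oplus I_L$ is automatic from that of $H_L$ and $I_L$ plus orthogonality, so $H_L \oplus I_L$ is an $(L,G)$-rational symplectic subspace.
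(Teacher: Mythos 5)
Your proof is correct and follows essentially the same route as the paper: verify the two conditions of Lemma~\ref{lem:definition}, getting condition~(a) directly from Lemma~\ref{lem:generate} and condition~(b) by rationalising both components via Corollary~\ref{cor:intersect-app} and then pinning down the remaining scalar with Corollary~\ref{cor:rigid}. The only (harmless) differences are that you rigidify the $I_K$-component rather than the $H_K$-component and manufacture the auxiliary link from the original linkage via Corollary~\ref{cor:translate}, where the paper simply takes $\hat h+\hat w$ from Lemma~\ref{lem:generate}; also, your detour through Corollary~\ref{cor:mitchell} in condition~(a) is unnecessary, since an $(L,G)$-rational line $\langle v\rangle_L$ already means $\calP_v(G)=L$.
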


\begin{proof}
We use Lemma~\ref{lem:definition}. Part (\ref{lem:definition:ii:a}) follows directly from
Lemma~\ref{lem:generate}. We now show (\ref{lem:definition:ii:b}). Let $h+w \in \calL(G)$
with nonzero $h \in H_K$ and $w \in I_K$ be given.
Corollary~\ref{cor:intersect-app} yields $\mu, \nu \in K^\times$
such that $\mu h \in H_L$ and $\nu w \in I_L$.
Let $\hat{h} \in H_L$ with $(\mu h) \bullet \hat{h} = 1$, as well as
$\hat{w} \in I_L$ with $(\nu w) \bullet \hat{w} = 1$.
Lemma~\ref{lem:generate} tells us that $\hat{h} + \hat{w} \in \calL(G)$.
Together with $(\nu h) + (\nu w) \in \calL(G)$,
Corollary~\ref{cor:rigid} yields $\nu h \in H_L$, whence
$\nu h + \nu w\in H_L \oplus I_L$.
\end{proof}

\section{Extending $(L,G)$-rational spaces}\label{sec:extend}

We continue using the same notation as in the previous sections.
Here, we will use the merging results in order to extend $(L,G)$-rational
symplectic spaces.

\begin{prop}\label{prop:construction}
Let $H_L$ be a nonzero $(L,G)$-rational symplectic subspace of~$V$.
Let nonzero $h,\tilde{h} \in H_K$, $w,\tilde{w}\in H_K^\perp$ be such that
$h+w,\tilde{h}+\tilde{w} \in \calL(G)$ and $w \bullet \tilde{w} \neq 0$.

Then there exist $\alpha,\beta \in K^\times$ such that
$\langle \alpha w,\beta \tilde{w}\rangle_L$
is an $(L,G)$-rational symplectic plane which is $(L,G)$-linked with~$H_L$.
\end{prop}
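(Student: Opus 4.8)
The plan is to reduce everything to the two-dimensional situation on $H_L$ and then apply Lemma~\ref{lem:construction}, which already produces an $(L,G)$-rational symplectic plane out of two orthogonal translates whose orthogonal parts are non-orthogonal. The issue is that Lemma~\ref{lem:construction} requires both direction vectors of the given transvections to lie in $H_L$ (not merely in $H_K$), and here $h,\tilde h$ are only assumed to lie in $H_K$. So the real work is to normalise $h$ and $\tilde h$, at the cost of rescaling $w$ and $\tilde w$ by suitable $\alpha,\beta \in K^\times$, so that the rescaled $H_K$-parts land inside $H_L$; the rescaling will automatically preserve the condition $w \bullet \tilde w \neq 0$ up to the nonzero factor $\alpha\beta$.

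First I would invoke Corollary~\ref{cor:intersect-app} twice: since $h+w \in \calL(G)$ with $0\neq h\in H_K$ and $w\in H_K^\perp$, there is $\mu\in K^\times$ with $\mu h\in H_L$, and likewise there is $\tilde\mu\in K^\times$ with $\tilde\mu\tilde h\in H_L$. Multiplying the transvection relations by these scalars (using $T_{cv}[\lambda]=T_v[c^2\lambda]$, i.e.\ $\langle \mu h+\mu w\rangle_K=\langle \mu(h+w)\rangle_K\in\calL(G)$ and similarly for $\tilde\mu$), I may therefore assume from the start — after replacing $(h,w)$ by $(\mu h,\mu w)$ and $(\tilde h,\tilde w)$ by $(\tilde\mu\tilde h,\tilde\mu\tilde w)$ — that in fact $h,\tilde h\in H_L$, that $h+w,\tilde h+\tilde w\in\calL(G)$, and that $w\bullet\tilde w\neq 0$ still holds (the new value is $\mu\tilde\mu$ times the old one, hence nonzero). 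Note the putative $\alpha,\beta$ of the statement will then be $\mu$ and $\tilde\mu$.

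Next, to apply Lemma~\ref{lem:construction} I need $w\bullet\tilde w\in L^\times$, not merely nonzero. This I arrange by a further scaling of one of the translates, which is legitimate because rescaling $\tilde w$ forces a matching rescaling of $\tilde h$, and $H_L$ is an $L$-vector space so it is closed under $L^\times$-scaling — but $K^\times$-scaling would take us out of $H_L$. The clean way: use Corollary~\ref{cor:translate}. Since $H_L$ is $(L,G)$-rational and $h+w\in\calL(G)$ with $h\in H_L$, Corollary~\ref{cor:translate} gives that $\langle h_1+w\rangle_L$ is $(L,G)$-rational for every nonzero $h_1\in H_L$; in particular we are free to choose the $H_L$-part of the first link. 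Similarly the $H_L$-part of the second link may be chosen freely in $H_L\setminus\{0\}$. Using this freedom, pick $h,\tilde h\in H_L$ with $h\bullet\tilde h$ equal to whatever is convenient; more to the point, I can now run the argument of Lemma~\ref{lem:construction} verbatim with the one modification that $w\bullet\tilde w$ is a nonzero element $c$ of $K$ rather than of $L$, and track where $L$-rationality of that scalar is actually used. Inspecting that proof, the only place $c\in L$ is needed is in the applications of Corollary~\ref{cor:mitchell}~(\ref{cor:mitchell:b}), which require pairings lying in $L^\times$. So I would instead rescale: replace $\tilde w$ by $c^{-1}\tilde w$ and correspondingly $\tilde h$ by $c^{-1}\tilde h$ (still in $H_L$!), so that after this $(L^\times)$-rescaling the new orthogonal parts satisfy $w\bullet\tilde w=1\in L^\times$. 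This absorbs an extra factor $c^{-1}\in K^\times$ into $\beta$, so the final scalars are $\beta=\tilde\mu c^{-1}$ and $\alpha=\mu$, still in $K^\times$ as required.

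With $h,\tilde h\in H_L$ nonzero, $w,\tilde w\in H_K^\perp$, $h+w,\tilde h+\tilde w\in\calL(G)$, and $w\bullet\tilde w\in L^\times$, Lemma~\ref{lem:construction} applies directly and yields that $\langle w,\tilde w\rangle_L$ is an $(L,G)$-rational symplectic plane. Writing back the scalars introduced above, this plane is $\langle \alpha w_{\mathrm{orig}},\beta\tilde w_{\mathrm{orig}}\rangle_L$ for the original data, giving the claimed $\alpha,\beta\in K^\times$. Finally, $(L,G)$-linkage of $\langle\alpha w,\beta\tilde w\rangle_L$ with $H_L$ is witnessed by the vector $h+\alpha w$ (a nonzero element of $H_L$ plus a nonzero element of $\langle\alpha w,\beta\tilde w\rangle_L$) which lies in $\calL(G)$ by construction, so the linkage condition holds by definition.

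The main obstacle is the bookkeeping of scalars: one has to keep $h,\tilde h$ inside $H_L$ while making $w\bullet\tilde w$ land in $L^\times$, and the only rescalings one is allowed to do freely on vectors of $H_L$ are by elements of $L^\times$, not $K^\times$. The key realisation that makes it go through is that Corollary~\ref{cor:intersect-app} lets us first pull $h,\tilde h$ into $H_L$ (absorbing a $K^\times$-factor into $\alpha,\beta$), after which only an $L^\times$-rescaling is needed to normalise $w\bullet\tilde w$, and that rescaling keeps $\tilde h$ in $H_L$. Everything else is a direct appeal to Lemma~\ref{lem:construction} and the definition of linkage.
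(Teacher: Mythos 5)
There is a genuine gap at the step where you normalise $w \bullet \tilde{w}$ to $1$. After your first normalisation you have $\tilde{\mu}\tilde{h} \in H_L$ via Corollary~\ref{cor:intersect-app}, and then you rescale the second link by $c^{-1}$ where $c = w \bullet \tilde{w} \in K^\times$, asserting that $c^{-1}\tilde{h}$ is ``still in $H_L$''. That assertion is unjustified: $H_L$ is only an $L$-vector space, so it is closed under $L^\times$-scaling but not under $K^\times$-scaling, and $c$ is an arbitrary element of $K^\times$ with no reason to lie in $L$. You even state this constraint yourself earlier in the proposal, and then violate it. The two normalisations are in tension: Corollary~\ref{cor:intersect-app} pins down the scaling of $\tilde{h}+\tilde{w}$ only up to $L^\times$, while the condition $w \bullet \tilde{w} = 1$ pins it down exactly, and the discrepancy between the two is precisely an element of $K^\times/L^\times$ that you cannot wish away. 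Your alternative suggestion --- running Lemma~\ref{lem:construction} with $c \in K^\times$ instead of $L^\times$ --- fails for the same reason, since that lemma reduces to Corollary~\ref{cor:mitchell}~(\ref{cor:mitchell:b}), which genuinely requires the pairing to lie in $L^\times$.

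The missing ingredient is Corollary~\ref{cor:rigid}, which the paper introduces exactly for this purpose (``to rigidify the situation''). The paper's proof scales $h+w$ so that $h \in H_L$, then scales $\tilde{h}+\tilde{w}$ so that $w \bullet \tilde{w} = 1$ --- accepting that at this point $\tilde{h}$ is only known to lie in $H_K$ --- and then invokes Corollary~\ref{cor:rigid} to conclude that $\tilde{h}$ in fact lies in $H_L$. That corollary is the nontrivial content here: it uses the first link (with $h \in H_L$) together with the condition $w \bullet \tilde{w} \in L^\times$ to force the rationalising scalar of the second link to be trivial. Once you replace your erroneous rescaling step by an appeal to Corollary~\ref{cor:rigid}, the rest of your argument (the application of Lemma~\ref{lem:construction} and the linkage witness $h+w$) matches the paper's proof.
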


\begin{proof}
By Corollary~\ref{cor:intersect-app} we may and do assume by scaling
$h+w$ that $h \in H_L$. Furthermore, we assume by scaling
$\tilde{h}+\tilde{w}$ that $w \bullet \tilde{w} = 1$.
Then Corollary~\ref{cor:rigid} yields that $\tilde{h} \in H_L$.
We may appeal to Lemma~\ref{lem:construction} yielding that
$\langle w, \tilde{w} \rangle_L$ is an $(L,G)$-rational plane.
The $(L,G)$-link is just given by $h+w$.
\end{proof}

\begin{cor}\label{cor:construction}
Let $H_L$ be a non-zero $(L,G)$-rational symplectic subspace of~$V$.
Let nonzero $h,\tilde{h} \in H_K$, $w,\tilde{w}\in H_K^\perp$ be such that
$h+w,\tilde{h}+\tilde{w} \in \calL(G)$ and $w \bullet \tilde{w} \neq 0$.

Then there is an $(L,G)$-rational symplectic subspace $I_L$ of~$V$
containing $H_L$ and such that $I_K = \langle H_K, w, \tilde{w} \rangle_K$.
\end{cor}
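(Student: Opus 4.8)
The plan is to obtain $I_L$ by merging $H_L$ with the $L$-plane spanned by suitable rescalings of $w$ and $\tilde w$, using the two main results already established in this section.

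First I would invoke Proposition~\ref{prop:construction} with the given vectors $h,\tilde h,w,\tilde w$: since $h+w,\tilde h+\tilde w\in\calL(G)$ and $w\bullet\tilde w\neq 0$, it produces scalars $\alpha,\beta\in K^\times$ such that $P_L:=\langle\alpha w,\beta\tilde w\rangle_L$ is an $(L,G)$-rational symplectic plane which is $(L,G)$-linked with $H_L$. Observe that $P_K=\langle w,\tilde w\rangle_K$, since $\alpha,\beta\neq 0$, and that $\dim_K P_K=2$, because $w\bullet\tilde w\neq 0$ forces $w$ and $\tilde w$ to be linearly independent over $K$.

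Next I would check the two bookkeeping points that let Proposition~\ref{prop:link} apply. Orthogonality of $H_L$ and $P_L$ is immediate: $P_K=\langle w,\tilde w\rangle_K\subseteq H_K^\perp$, so the pairing vanishes on $H_L\times P_L$. Directness of the sum follows from $H_L\cap P_L\subseteq H_K\cap H_K^\perp=0$, using that the restriction of the symplectic form to $H_K$ is non-degenerate. Thus $H_L$ and $P_L$ are orthogonal $(L,G)$-rational symplectic subspaces that are $(L,G)$-linked, so Proposition~\ref{prop:link} gives that $I_L:=H_L\oplus P_L$ is an $(L,G)$-rational symplectic subspace of $V$. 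It contains $H_L$ by construction, and $I_K=H_K\oplus P_K=\langle H_K,w,\tilde w\rangle_K$, which is exactly the asserted identity.

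I do not expect any genuine obstacle here: all the substantive work — producing a linked rational plane out of the two links (Proposition~\ref{prop:construction}) and merging two orthogonal linked rational symplectic subspaces (Proposition~\ref{prop:link}) — has already been carried out. The only things requiring care are the verifications above, namely that rescaling by $\alpha$ and $\beta$ leaves the $K$-span unchanged, and that orthogonality together with non-degeneracy of the form on $H_K$ makes $H_L\oplus P_L$ an honest internal direct sum sitting inside a symplectic subspace of $V$.
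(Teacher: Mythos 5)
Your proposal is correct and follows exactly the paper's own route: the paper's proof is the one-line ``This follows directly from Propositions~\ref{prop:construction} and~\ref{prop:link}.'' Your additional verifications (that rescaling preserves the $K$-span, and that orthogonality plus non-degeneracy on $H_K$ give a direct sum) are the right bookkeeping that the paper leaves implicit.
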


\begin{proof}
This follows directly from Propositions~\ref{prop:construction} and~\ref{prop:link}.
\end{proof}

\begin{prop}\label{prop:step}
Assume $\langle \calL(G) \rangle_K = V$.
Let $H_L$ be a nonzero $(L,G)$-rational symplectic space.
Let $0 \neq v \in \calL(G) \setminus (H_K \cup H_K^\perp)$.

Then there is an $(L,G)$-rational symplectic space~$I_L$ containing $H_L$
such that $v \in I_K$.
\end{prop}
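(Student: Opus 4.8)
The plan is to use the hypothesis $\langle \calL(G)\rangle_K = V$ to produce a vector $w \in \calL(G)$ that has a component along $v$ outside $H_K$ and is ``non-degenerate'' enough to feed into the merging machinery of the previous section, and then to merge the plane it generates into $H_L$. First I would observe that $v \notin H_K^\perp$, so choose $0 \neq h_0 \in H_K$ with $h_0 \bullet v \neq 0$; then $v = h + w$ where $w \in H_K^\perp$ is nonzero (because $v \notin H_K$) and $0 \neq h \in H_K$. By Corollary~\ref{cor:intersect-app} the line $\langle h\rangle_K$ is $(L,G)$-rationalisable, so after scaling we may assume $h \in H_L$, and we now have one link $h + w \in \calL(G)$ between $H_L$ and the orthogonal complement.

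The key step is to produce a second link $\tilde h + \tilde w \in \calL(G)$ with $\tilde h \in H_K$, $\tilde w \in H_K^\perp$ and $w \bullet \tilde w \neq 0$; then Corollary~\ref{cor:construction} yields an $(L,G)$-rational symplectic space $I_L \supseteq H_L$ with $I_K = \langle H_K, w, \tilde w\rangle_K \ni w$, and since $v = h + w$ with $h \in H_K \subseteq I_K$, we get $v \in I_K$ as desired. To find such a $\tilde w$, I would use $\langle \calL(G)\rangle_K = V$: the set $\calL(G)$ spans $V$, hence it cannot be contained in the hyperplane $w^\perp$ (which is proper since $w \neq 0$ and the form is nondegenerate). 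So there is $v' \in \calL(G)$ with $v' \bullet w \neq 0$. Write $v' = \tilde h + \tilde w$ with $\tilde h \in H_K$, $\tilde w \in H_K^\perp$ (the decomposition $V = H_K \oplus H_K^\perp$, valid since $H_K$ is symplectic). Because $w \in H_K^\perp$ and $\tilde h \in H_K$, we have $v' \bullet w = \tilde w \bullet w \neq 0$, which is exactly the non-orthogonality condition required by Corollary~\ref{cor:construction}. One must also check $\tilde h \neq 0$: if $\tilde h = 0$ then $v' = \tilde w \in H_K^\perp$ and I would instead use $v'' := v' + v = (\tilde w + h) + (\text{wait})$ — more carefully, replace $v'$ by a suitable combination, or use Corollary~\ref{cor:translate} applied to $H_L$ with the link $h+w$ to replace $h$ by any nonzero element of $H_L$, ensuring a genuine $H_K$-component is present. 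Similarly if $h = 0$ initially (i.e.\ $v \in H_K^\perp$), but that is excluded by hypothesis $v \notin H_K^\perp$.

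The main obstacle I expect is the degenerate case $\tilde h = 0$, i.e.\ when the only transvection centres pairing nontrivially with $w$ lie entirely in $H_K^\perp$. To handle this I would take such a $\tilde w \in \calL(G) \cap H_K^\perp$ with $\tilde w \bullet w \neq 0$ and then consider the new centre obtained from the link $h + w$: by Corollary~\ref{cor:translate}, $\langle h_1 + w\rangle_L \in \calL(G)$ for every $0 \neq h_1 \in H_L$, and similarly we can manufacture links $h_1 + \tilde w$ once we know $\tilde w$ alone is a centre and $H_L$ is rational — applying Corollary~\ref{cor:translate} again (now with the orthogonal vector being $\tilde w$ and the base point some link), so that both $h + w$ and $h' + \tilde w$ are available with $h, h' \in H_L$ nonzero and $w \bullet \tilde w \neq 0$, which is precisely the hypothesis of Corollary~\ref{cor:construction}. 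Thus in every case Corollary~\ref{cor:construction} applies and produces the required $I_L$, completing the proof.
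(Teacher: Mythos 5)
Your overall strategy coincides with the paper's: decompose $v=h+w$ with $0\neq h\in H_K$, $0\neq w\in H_K^\perp$ (both nonzero precisely because $v\notin H_K\cup H_K^\perp$), use $\langle\calL(G)\rangle_K=V$ to find a second centre $v'=\tilde h+\tilde w$ with $\tilde w\bullet w\neq 0$, and feed the two links into Corollary~\ref{cor:construction}. The non-degenerate case ($\tilde h\neq 0$) is handled correctly and exactly as in the paper.

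The problem is the degenerate case $\tilde h=0$, which you rightly single out as the main obstacle but then resolve incorrectly. You claim that, knowing only that $\tilde w\in\calL(G)$ and that $H_L$ is $(L,G)$-rational, Corollary~\ref{cor:translate} lets you ``manufacture links $h_1+\tilde w$''. It does not: the hypothesis of that corollary is the existence of some $0\neq h\in H_L$ with $\langle h+\tilde w\rangle_K\in\calL(G)$ already, i.e.\ an actual link between $H_L$ and $\tilde w$; it only moves the $H_L$-component of an existing link around, and cannot create a nonzero $H_L$-component from the bare centre $0+\tilde w$. (Your aborted idea of using $v'+v$ is actually closer to the truth, but the sum of two centres is not automatically a centre, so it too needs justification.) The paper's fix is different: scale so that $h\in H_L$, so that by Corollary~\ref{cor:translate} $\langle h+w\rangle_L$ is an $(L,G)$-rational \emph{line}; then scale $\tilde w$ so that $(h+w)\bullet\tilde w=w\bullet\tilde w\in L^\times$ and apply Corollary~\ref{cor:mitchell}~(\ref{cor:mitchell:b}) to the line $\langle h+w\rangle_L$ and the centre $\langle\tilde w\rangle_K$ to conclude that $h+w+\tilde w\in\calL(G)$. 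This element is the required second link: its $H_K$-component is $h\neq 0$ and its $H_K^\perp$-component $w+\tilde w$ still satisfies $w\bullet(w+\tilde w)=w\bullet\tilde w\neq 0$. With that replacement your argument goes through; as written, the degenerate case is a genuine gap.
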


\begin{proof}
We write $v = h + w$ with $h \in H_K$ and $w \in H_K^\perp$. Note that both
$h$ and $w$ are nonzero by assumption.
As $\langle \calL(G) \rangle_K = V$, we may choose $\tilde{v} \in \calL(G)$
such that $\tilde{v} \bullet w \neq 0$.
We again write $\tilde{v} = \tilde{h} + \tilde{w}$
with $\tilde{h} \in H_K$ and $\tilde{w} \in H_K^\perp$.

We, moreover, want to ensure that $\tilde{h} \neq 0$.
If $\tilde{h} = 0$, then we proceed as follows. Corollary~\ref{cor:intersect-app}
implies the existence of $\mu \in K^\times$ such that $\mu h \in H_L$.
Now replace $h$ by $\mu h$ and $w$ be $\mu w$. Then Corollary~\ref{cor:translate}
ensures that $\langle h+w \rangle_L$ is an $(L,G)$-rational line.
Furthermore, scale $\tilde{w}$ so that $(h+w) \bullet \tilde{w} \in L^\times$, whence by
Corollary~\ref{cor:mitchell} $h+w+\tilde{w} \in \calL(G)$. We use this
element as $\tilde{v}$ instead.
Note that it still satisfies $\tilde{v} \bullet w \neq 0$, but now $\tilde{h} \neq 0$.

Now we are done by Corollary~\ref{cor:construction}.
\end{proof}

\begin{cor}\label{cor:twothree}
Assume $\langle \calL(G) \rangle_K = V$, and let $H_L$ be an $(L,G)$-rational symplectic space.

Then there is an $(L,G)$-rational symplectic space~$I_L$ containing~$H_L$ such that $\calL(G) \subseteq I_K \cup I_K^\perp$.
\end{cor}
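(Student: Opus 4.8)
The plan is to prove Corollary~\ref{cor:twothree} by a straightforward induction on $\dim_K I_K$, starting from $H_L$ and repeatedly applying Proposition~\ref{prop:step} to enlarge the current rational symplectic space until no direction of $\calL(G)$ is left in the ``forbidden'' region. Concretely, I would argue as follows. Among all $(L,G)$-rational symplectic subspaces of~$V$ that contain~$H_L$, choose one, call it~$I_L$, of maximal $K$-dimension (such a maximum exists since $V$ is finite-dimensional, and the collection is nonempty as it contains~$H_L$). I claim this $I_L$ satisfies $\calL(G) \subseteq I_K \cup I_K^\perp$.

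Suppose not. Then there is $0 \neq v \in \calL(G)$ with $v \notin I_K$ and $v \notin I_K^\perp$, i.e.\ $v \in \calL(G) \setminus (I_K \cup I_K^\perp)$. Applying Proposition~\ref{prop:step} with $H_L$ replaced by~$I_L$ yields an $(L,G)$-rational symplectic space $I'_L$ containing~$I_L$ with $v \in I'_K$. Since $v \in I'_K$ but $v \notin I_K$, we have $I_K \subsetneq I'_K$, so $\dim_K I'_K > \dim_K I_K$, contradicting the maximality of~$I_L$. Hence $\calL(G) \subseteq I_K \cup I_K^\perp$, as desired. Note that the hypothesis $\langle \calL(G)\rangle_K = V$ is exactly what is needed to invoke Proposition~\ref{prop:step} at each stage; it is inherited unchanged throughout since it is a hypothesis about~$G$ and~$V$, not about the current subspace.

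There is essentially no obstacle here: the real content has already been placed in Proposition~\ref{prop:step} (and, behind it, in the merging machinery of Sections~\ref{sec:merge} and the Wagner-type Proposition~\ref{prop:Wagner}). The only point requiring a word of care is that Proposition~\ref{prop:step} is stated for a fixed starting space but is applied here to the growing space~$I_L$; this is legitimate because its statement quantifies over arbitrary nonzero $(L,G)$-rational symplectic spaces, so each application is just a fresh instance. One may equally phrase the argument as an explicit induction: if the current space~$I_L$ does not yet absorb all of $\calL(G)$ into $I_K \cup I_K^\perp$, pick an offending~$v$, enlarge via Proposition~\ref{prop:step}, and observe the $K$-dimension strictly increases, so the process terminates after at most $n - \dim_K H_K$ steps. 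Either formulation gives the corollary.
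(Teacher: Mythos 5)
Your argument is correct and is exactly the paper's proof, which simply says ``Iterate Proposition~\ref{prop:step}''; your maximal-dimension formulation is just a careful way of phrasing that iteration and its termination. No further comment is needed.
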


\begin{proof}
Iterate Proposition~\ref{prop:step}.
\end{proof}

\section{Proof of Theorem~\ref{thm:gp}}\label{sec:proof}

In this section we will finish the proof of Theorem~\ref{thm:gp}.

\begin{lem}\label{lem:dec}
Let $V = S_1 \oplus \dots \oplus S_h$ be a decomposition of~$V$ into
linearly independent, mutually orthogonal subspaces such that
$\calL(G) \subseteq S_1 \cup \dots \cup S_h$.

\begin{enumerate}[(a)]
\item\label{lem:dec:a} If $v_1,v_2 \in \calL(G) \cap S_1$ are such that $v_1+v_2 \in \calL(G)$,
then for all $g \in G$ there exists an index $i \in \{1,\dots,h\}$ such that $g(v_1)$ and
$g(v_2)$ belong to the same~$S_i$.
\item\label{lem:dec:b} If $S_1$ is $(L,G)$-rationalisable, then for all $g \in G$ there exists an
index $i \in \{1,\dots,h\}$ such that $gS_1 \subseteq S_i$.
\end{enumerate}
\end{lem}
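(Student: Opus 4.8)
The plan is to prove (a) by a direct-sum argument, and then bootstrap (b) from (a) by working with an $L$-basis of an $(L,G)$-rational model of $S_1$.

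For (a), I would invoke Lemma~\ref{lem:fix}: since $g$ maps $\calL(G)$ into itself, the three vectors $g(v_1)$, $g(v_2)$ and $g(v_1+v_2)=g(v_1)+g(v_2)$ all lie in $\calL(G)\subseteq S_1\cup\dots\cup S_h$, say $g(v_1)\in S_i$, $g(v_2)\in S_j$, $g(v_1)+g(v_2)\in S_k$. Each of these is nonzero since $g$ is invertible and $v_1,v_2$ (and $v_1+v_2$, being a direction vector of a transvection) are nonzero. Assume $i\neq j$ for contradiction. Then $g(v_1)+g(v_2)\neq 0$, for otherwise $g(v_1)=-g(v_2)\in S_i\cap S_j=\{0\}$. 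Hence $0\neq g(v_1)+g(v_2)\in (S_i\oplus S_j)\cap S_k$, and since the sum $\bigoplus_t S_t$ is direct this intersection is $\{0\}$ unless $k\in\{i,j\}$; so $k\in\{i,j\}$, say $k=i$. But then $g(v_2)=(g(v_1)+g(v_2))-g(v_1)\in S_i\cap S_j=\{0\}$, a contradiction (the case $k=j$ is symmetric). Therefore $i=j$.

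For (b), let $S_{1,L}$ be an $(L,G)$-rational subspace with $S_{1,K}=S_1$ (such exists by definition of $(L,G)$-rationalisability), and fix an $L$-basis $v_1,\dots,v_m$ of $S_{1,L}$; by $L$-rationality this is also a $K$-basis of $S_1$. From the definition of $(L,G)$-rationality, every nonzero vector of $S_{1,L}$ is the direction of a transvection in $G$ (take a nonzero parameter in $L$), so $v_1,\dots,v_m\in\calL(G)$ and likewise $v_r+v_s\in\calL(G)$ for $r\neq s$. Applying part (a) to each pair $(v_r,v_s)$ shows that $g(v_r)$ and $g(v_s)$ lie in a common block. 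Since $g$ is invertible, each $g(v_r)$ is nonzero and therefore lies in a \emph{unique} block $S_{t(r)}$; the pairwise statement then forces $t(1)=\dots=t(m)=:i$. As $g(v_1),\dots,g(v_m)$ span $g(S_1)$ over $K$, we conclude $g(S_1)=\langle g(v_1),\dots,g(v_m)\rangle_K\subseteq S_i$. The degenerate cases $m=0$ (where $S_1=\{0\}$) and $m=1$ (where (a) is vacuous and $g(v_1)\in\calL(G)$ already lies in a single $S_i$) are immediate.

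The only point demanding care — rather than a genuine obstacle — is the sequence of nonvanishing checks in (a): one must confirm $g(v_1)+g(v_2)\neq 0$ before exploiting directness of the decomposition, which rests on $v_1+v_2$ being a nonzero direction vector and on the invertibility of $g$. Everything else is formal manipulation of the direct-sum decomposition together with Lemma~\ref{lem:fix} and the definition of $(L,G)$-rationality.
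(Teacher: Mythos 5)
Your proof is correct and follows essentially the same route as the paper: part (a) is the same direct-sum contradiction (the paper states it more tersely, leaving the non-vanishing of $g(v_1+v_2)$ and the case $k\notin\{i,j\}$ implicit), and part (b) is exactly the paper's one-line reduction of applying (a) to an $L$-basis of an $(L,G)$-rational model of $S_1$. No gaps.
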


\begin{proof}
(\ref{lem:dec:a}) Assume that $g(v_1) \in S_i$ and $g(v_2) \in S_j$ with $i \neq j$. Then
$g(v_1)+g(v_2) = g(v_1 + v_2) \in \calL(G)$ satisfies $g(v_1+v_2) \in S_i \oplus S_j$,
but it neither belongs to~$S_i$ nor to~$S_j$.
This contradicts the assumption that $\calL(G) \subseteq S_1 \cup \dots \cup S_h$.

(\ref{lem:dec:b}) If $S_1 = S_{1,L}$ with $S_{1,L}$ an $(L,G)$-rational space, we can
apply~(\ref{lem:dec:a}) to an $L$-basis of~$S_{1,L}$.
\end{proof}

\begin{cor}\label{cor:dec}
Let $I_L \subseteq V$ be an $(L,G)$-rational symplectic subspace such that
$\calL(G) \subseteq I_K \cup I_K^\perp$ and let $g \in G$.
Then either $g(I_K) = I_K$ or $g(I_K) \subseteq I_K^\perp$;
in the latter case $I_K \cap g(I_K) = 0$.
\end{cor}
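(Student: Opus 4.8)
The plan is to deduce Corollary~\ref{cor:dec} almost immediately from Lemma~\ref{lem:dec}~(\ref{lem:dec:b}) once the right orthogonal decomposition of $V$ is in place. Since $I_L$ is an $(L,G)$-rational \emph{symplectic} subspace, the symplectic form is nondegenerate on $I_L$, hence on $I_K = \langle I_L \rangle_K$; therefore $I_K$ is a nonsingular symplectic subspace and $V = I_K \oplus I_K^\perp$ with the two summands orthogonal. Put $S_1 := I_K$ and $S_2 := I_K^\perp$. Then $V = S_1 \oplus S_2$ is a decomposition into linearly independent, mutually orthogonal subspaces, and the hypothesis $\calL(G) \subseteq I_K \cup I_K^\perp$ says precisely $\calL(G) \subseteq S_1 \cup S_2$. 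Finally $S_1 = I_K$ is $(L,G)$-rationalisable, by definition, with $I_L$ as the witnessing $(L,G)$-rational space.

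With this set-up I would apply Lemma~\ref{lem:dec}~(\ref{lem:dec:b}) to the given $g \in G$: there exists $i \in \{1,2\}$ with $g(S_1) \subseteq S_i$, i.e.\ $g(I_K) \subseteq I_K$ or $g(I_K) \subseteq I_K^\perp$. It then remains to sharpen both alternatives. In the first case $g$ is bijective, so $\dim_K g(I_K) = \dim_K I_K$; combined with $g(I_K) \subseteq I_K$ this forces $g(I_K) = I_K$. In the second case, from $g(I_K) \subseteq I_K^\perp$ we get $I_K \cap g(I_K) \subseteq I_K \cap I_K^\perp = 0$, the last equality because the form is nondegenerate on $I_K$.

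I do not anticipate a genuine obstacle: all the substantive work is hidden in Lemma~\ref{lem:dec}, whose proof in turn reduces (via part~(\ref{lem:dec:a}) and Lemma~\ref{lem:fix}) to the fact that the images under $g$ of the vectors of an $L$-basis of $I_L$, together with their pairwise sums, all lie in $\calL(G) \subseteq S_1 \cup S_2$. The only points needing a word of care are the elementary linear-algebra facts that a nonsingular symplectic subspace splits off its orthogonal complement and that nondegeneracy yields $I_K \cap I_K^\perp = 0$; both are standard.
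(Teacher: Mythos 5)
Your proof is correct and follows exactly the paper's route: the paper also deduces the corollary from Lemma~\ref{lem:dec} with $S_1 = I_K$ and $S_2 = I_K^\perp$, leaving implicit the routine points (nondegeneracy of the form on $I_K$ giving $V = I_K \oplus I_K^\perp$ and $I_K \cap I_K^\perp = 0$, and bijectivity of $g$ upgrading $g(I_K) \subseteq I_K$ to equality) that you spell out.
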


\begin{proof}
This follows from Lemma~\ref{lem:dec} with $S_1 = I_K$ and $S_2 = I_K^\perp$.
\end{proof}

We are now ready to prove Theorem~\ref{thm:gp}.

\begin{proof}[Proof of Theorem~\ref{thm:gp}.]
As we assume that $G$ contains some transvection, it follows that $\calL(G)$ is
nonempty and consequently $\langle \calL(G) \rangle_K$ is a nonzero $K$-vector
space stabilised by~$G$ due to Lemma~\ref{lem:fix}.
Hence, either we are in case~\ref{thm:gp:1} of Theorem~\ref{thm:gp}
or $\langle \calL(G) \rangle_K = V$, which we assume now.

From Proposition~\ref{prop:mitchell} we obtain that there is some
$A \in \GSp(V)$, a subfield $L \le K$ such that there is an $(L,AGA^{-1})$-rational
symplectic plane~$H_{L}$.
Since the statements of Theorem~\ref{thm:gp} are not affected by this conjugation,
we may now assume that $H_{L}$ is $(L,G)$-rational.

From Corollary~\ref{cor:twothree} we obtain an $(L,G)$-rational
symplectic space $I_{1,L}$ such that $\calL(G) \subseteq I_{1,K}\cup I_{1,K}^\perp$.
If $I_{1,K} = V$, then we know due to $I_{1,L} \cong L^{n}$
that $G$ contains a transvection whose direction is
any vector of~$I_{1,L}$. As the transvections generate the symplectic group, it
follows that $G$ contains $\Sp(I_{1,L})\cong \Sp_{n}(L)$
and we are in case~\ref{thm:gp:3} of Theorem~\ref{thm:gp}.
Hence, suppose now that $I_{1,K} \neq V$.

Either every $g\in G$ stabilises $I_{1, K}$, and we are in case~\ref{thm:gp:1} and done,
or there is $g \in G$ and $v \in I_{1,L}$ with $g(v) \not\in I_{1,K}$.
Set $I_{2, L}:=gI_{1, L}$.
Note that $I_{2,L} \subseteq \calL(G)$ because of Lemma~\ref{lem:basechange}.
Now we apply Corollary~\ref{cor:dec} to the decomposition $V=I_{1, K}\oplus I_{1, K}^{\perp}$
and obtain that $g(I_{1, K})\subseteq I_{1, K}^{\perp}$.
Moreover $\calL(G)=\calL(gGg^{-1})\subseteq gI_{1, K}\cup gI_{1, K}^{\perp}=I_{2, K}\cup I_{2, K}^{\perp}$.

We now have
$\calL(G) \subseteq I_{1, K}\cup I_{2, K}\cup (I_{1, K}\oplus I_{2, K})^{\perp}$.
Either $I_{1, K}\oplus I_{2, K}=V$ and $(I_{1, K}\oplus I_{2, K})^{\perp}=0$,
or there are two possibilities:

\begin{itemize}
\item  For all $g\in G$, $gI_{1, L}\subseteq I_{1, K}\cup I_{2, K}$.
If this is the case, then $G$ fixes the space $I_{1, K}\oplus I_{2, K}$,
and we are in case~\ref{thm:gp:1}, and done.

\item There exists $g\in G$, $v\in I_{1, L}$ such that $g(v)\not\in I_{1, K}\cup I_{2, K}$.
Set $I_{3, L}=gI_{1, L}$. Due to $\calL(G)\subseteq I_{3, K}\cup I_{3, K}^{\perp}$, we then have
$\calL(G) \subseteq I_{1, K}\cup I_{2, K}\cup I_{3, K}\cup(I_{1, K}\oplus I_{2, K}\oplus I_{3, K})^{\perp}$.
\end{itemize}

Hence, iterating this procedure, we see that either we are in case~\ref{thm:gp:1},
or we obtain a decomposition $V=I_{1, K}\oplus\cdots \oplus I_{h, K}$
with mutually orthogonal symplectic spaces such that
$\calL(G)\subseteq I_{1, K}\cup \cdots \cup I_{h, K}$.

Note that Lemma~\ref{lem:dec} implies that $G$ respects this decomposition
in the sense that for all $i\in \{1,\dots,h\}$ there is $j\in \{1,\dots,h\}$
such that $g(I_{i,K})=I_{j,K}$. If the resulting action of $G$ on the index
set $\{1,\dots,h\}$ is not transitive, then we are again in case~\ref{thm:gp:1},
otherwise in case~\ref{thm:gp:2}.
\end{proof}

\appendix
\section{A result on transvections in a 3-dimensional vector space}\label{appendix:A}

In this appendix we provide a proof of the following result concerning subgroups in a $3$-dimensional vector space that was used in Section~\ref{sec:merge}:

\begin{prop}\label{thm:Wagner}
Let $V$ be a $3$-dimensional vector space over a finite field $K$ of
characteristic $\ell\geq 5$, and let $G\subseteq \SL(V)$ be a subgroup satisfying:
\begin{enumerate}
 \item There exists a $1$-dimensional $K$-vector space $U$ such that $G\vert_U=\{\id_U\}$.
 \item There exist $U_1, U_2, U_3$ three distinct centres of transvections in $G$ such
that $U\not\subseteq U_1\oplus U_2$ and $U\not=U_3$.
\end{enumerate}
Then $(U_1 \oplus U_2 ) \cap (U \oplus U_3)$ is the centre of a transvection of $G$. 
\end{prop}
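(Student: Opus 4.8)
\emph{Plan.} The plan is to place the four lines $U,U_1,U_2,U_3$ in general position, restrict attention to the plane $V':=U_1\oplus U_2$ and its complement $U$, apply Lemma~\ref{lem:1911} both on $V'$ and on the quotient $V/U$, and then \emph{lift} the resulting information from $V/U$ back up to $V$; this last step is the crux.

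\emph{Normalisation and easy cases.} I would first choose $0\neq e\in U$ and $0\neq f_i\in U_i$ $(i=1,2)$; since $U\not\subseteq U_1\oplus U_2$ and $U_1\neq U_2$, the triple $\{e,f_1,f_2\}$ is a $K$-basis of $V$. Write $0\neq f_3=\alpha e+\beta f_1+\gamma f_2\in U_3$. If $\alpha=0$, then $U_3\subseteq U_1\oplus U_2$, the two distinct planes $U_1\oplus U_2$ and $U\oplus U_3$ meet in the line $U_3$, and $(U_1\oplus U_2)\cap(U\oplus U_3)=U_3$ is already a transvection centre; if $\alpha\neq0=\beta$ (so $\gamma\neq0$, as $U_3\neq U$) the intersection is $U_2$, and if $\gamma=0$ it is $U_1$. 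So we may assume $\alpha,\beta,\gamma\in K^\times$; a short computation then gives $(U_1\oplus U_2)\cap(U\oplus U_3)=\langle\beta f_1+\gamma f_2\rangle=:U_0\subseteq V'$. Note the images $\bar U_1,\bar U_2,\bar U_3$ of the three centres in $V/U$ are pairwise distinct, and $\bar U_3$ equals the image of $U_0$, since $f_3-(\beta f_1+\gamma f_2)=\alpha e\in U$.

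\emph{Two applications of Lemma~\ref{lem:1911}.} Put $P:=\langle T_1,T_2\rangle$. As each $T_i$ ($i=1,2$) has its centre in $V'$ and fixes a hyperplane containing $U$, it stabilises $V'$; hence $V=U\oplus V'$ is a $P$-stable decomposition, $P$ fixes $U$ pointwise and acts faithfully on $V'$. Applying Lemma~\ref{lem:1911} to $P|_{V'}\subseteq\GL(V')$ (generated by the two transvections $T_i|_{V'}$ with independent directions $U_1,U_2$) and then conjugating $G$ by $\id_U\oplus A$ — which fixes $U$ pointwise, hence preserves the hypotheses, and is harmless for the conclusion — we may assume $P|_{V'}=\SL(V'_L)$ with $V'_L=\langle f_1,f_2\rangle_L$ for a subfield $L\leq K$; the normalisations above still hold for the (possibly changed) $f_3$. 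Let $\bar G$ denote the image of $G$ in $\GL(V/U)$; since $\det g=1$ and $g$ fixes $U$ pointwise, $\bar G\subseteq\SL(V/U)$, and $\bar P=\SL(V_L)$ with $V_L=\langle\bar f_1,\bar f_2\rangle_L$. Now set $M:=\langle T_1,T_2,T_3\rangle\leq G$; its image $\bar M=\langle\bar T_1,\bar T_2,\bar T_3\rangle$ contains $\bar P$, is generated by transvections, and hence — by a second application of Lemma~\ref{lem:1911}, the containment $\bar P\subseteq\bar M$ pinning down the rational structure — equals $\SL(V_{L'})$ for a subfield $L'\supseteq L$, with $V_{L'}=\langle\bar f_1,\bar f_2\rangle_{L'}$.

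\emph{Lifting and conclusion.} A transvection of $G$ with centre $U_0$ must fix $e$, so has axis $U\oplus U_0$ and equals $\id_U\oplus s$ for a transvection $s$ of $V'$ with centre $U_0$; thus it suffices to exhibit such an element inside $M$. Let $N=\ker(G\to\bar G)$; it consists of $\id$ together with the transvections of $G$ with centre $U$, it is normal in $G$, and $N\cong\Hom(V/U,K)$ is, as a module for $\bar M\cong\SL_2(L')$, a direct sum of copies of the natural module — hence so is the submodule $M\cap N$. Invoking the vanishing of the low-degree cohomology of $\SL_2(L')$ with coefficients in its natural module (valid for $\ell\geq5$ — this is precisely where the hypothesis enters), the extension $1\to M\cap N\to M\to\SL_2(L')\to1$ splits via a complement $M'$ which we may take to contain $P$, and moreover $V=U\oplus V''$ as an $M'$-module. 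Since $V''$ is then $P$-stable and $\Hom_P(V/U,U)=0$ forces $V''=V'$, we obtain $M'=\id_U\oplus\SL(V'_{L'})$ for an $L'$-rational structure $V'_{L'}\supseteq V'_L$ on $V'$. Finally, $\bar T_3$ is a nontrivial transvection of $\bar M=\SL(V_{L'})$ with centre $\bar U_3$, so $\bar U_3$ — equivalently, under the isomorphism $V'\xrightarrow{\sim}V/U$, the line $U_0$ — is an $L'$-line of the rational structure, and therefore a transvection centre of $\SL(V'_{L'})$. The corresponding element $\id_U\oplus s$ lies in $M'\subseteq G$ and is the transvection with centre $U_0$ we wanted.

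\emph{Where the difficulty lies.} All steps except the lifting are bookkeeping with the decomposition $V=U\oplus V'$ and with the (possibly degenerate) positions of the four lines. The heart of the matter is transporting the $L$-rational structure visible to $\langle T_1,T_2\rangle$ on $V'$ up to the possibly larger $L'$-rational structure produced by adjoining $T_3$, \emph{inside $G$} and not merely in the quotient $\bar G$; both Lemma~\ref{lem:1911} (Dickson's classification) and the cohomological splitting genuinely use $\ell\geq5$, and for $\ell\in\{2,3\}$ the statement requires more care.
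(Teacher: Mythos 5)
Your argument is correct in substance but takes a genuinely different route from the paper's. Both proofs begin by applying Lemma~\ref{lem:1911} to $U_1\oplus U_2$, but the paper then puts a \emph{compatible} $(L,G_{1,3})$-rational structure on the second plane $U_1\oplus U_3$ (using that every transvection of $G$ with centre $U_1$ has axis $U\oplus U_1$, so its parameter is read off consistently in both planes), extracts from each plane the element acting as $-\id$ there and as $\id$ on $U$, multiplies these two involutions to get an explicit transvection $T\in G$ with centre $U$ and axis $U\oplus U_1$, and conjugates $T_3$ by a power of $T$ to move its centre to $u_3-u\in(U_1\oplus U_2)\cap(U\oplus U_3)$. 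You instead pass to the quotient $V/U$ and lift $\bar M\cong\SL_2(L')$ back into $G$ by splitting $1\to M\cap N\to M\to\bar M\to 1$; the cohomological input is correct and cheap, since $-\Id\in Z(\SL_2(L'))$ acts as $-1$ on $N\cong\Hom(V/U,U)$ and therefore kills all $H^i$ in odd characteristic — though your parenthetical is slightly off: this step needs only $\ell$ odd, and the genuine use of $\ell\ge 5$ is inside Lemma~\ref{lem:1911} via Dickson's classification. Two steps deserve to be written out rather than asserted. First, the claim that $\bar M$ \emph{equals} $\SL(V_{L'})$ for $V_{L'}=\langle\bar f_1,\bar f_2\rangle_{L'}$ with $L'\supseteq L$: Lemma~\ref{lem:1911} only provides this after conjugating $\bar M$ by a diagonal $B$ fixing $\bar U_1,\bar U_2$, and one must check that $B$ normalises $\SL(V_{L'})$; this does follow, because the transvections of $\bar P$ force $B$ to carry at least three points of $\PP^1(L)$ into $\PP^1(L')$, whence $B\in K^\times\cdot\GL_2(L')$ and $L\subseteq L'$ — but it is precisely the compatibility issue the paper must also address (cf.\ Corollary~\ref{cor:extend}) and should not be waved through. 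Second, choosing the complement $M'$ to contain $P$ uses $H^1(\bar P,M\cap N)=0$, again by the same centre argument. With these filled in, your proof is a clean alternative: it trades the paper's explicit matrix manipulation (including the use of $2\in K^\times$ to normalise the transvection $T^k$) for a standard cohomological splitting, at the price of invoking group cohomology in an otherwise elementary, self-contained appendix.
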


This result is Theorem 3.1(a) of~\cite{Wa}. Below we have written the proof in detail.  We will essentially follow the original proof of Wagner~\cite{Wa}, reformulating it with the terminology developed in this paper. We follow~\cite{Mitchell1911} when Wagner refers to the results proven there.
We also used \cite{Mit} to `get a feeling' of the ideas used in~\cite{Wa}.

The setting differs from that of the rest of the paper, since there is no symplectic structure. One consequence of this is that the axis of a transvection $\tau$ in $\SL(V)$ is not determined by its centre. Given any plane $W\subset V$ and any line $U\subset V$, there exist transvections with axis $W$ and centre~$U$;
namely, fixing an element $\varphi\in \Hom(V, K)=V^*$ of the dual vector space of $V$ such that
$W=\ker(\varphi)$, and fixing a nonzero vector $u\in U$, then all transvections in $\SL(V)$ with axis $W$ and centre~$U$ are given by
\begin{equation*}
 \tau(v):= v + \lambda \varphi(v) u
\end{equation*}
for some $\lambda\in K$ (cf.\ \cite{Ar}, p.~160).

A key input in the proof is Lemma~\ref{lem:1911}. In order to apply it to a subplane $W\subset V$, we need to endow it with some symplectic structure. We do so by choosing any two linearly independent vectors $e_1$, $e_2$ and considering the symplectic structure defined by declaring $\{e_1, e_2\}$ to be a symplectic basis.

\begin{proof}[Proof of Proposition~\ref{thm:Wagner}]
Without loss of generality we may assume that $G$ is generated by
transvections. In particular, we may assume $G\subseteq \SL(V)$.

The hypotheses imply that the inclusion $U_3\subseteq (U_1\oplus U)\cap (U_2\oplus U)$ does not hold.  Indeed, assume $U_3\subseteq (U_1\oplus U)\cap (U_2\oplus U)$. We know that $V=U_1\oplus U_2\oplus U$, hence $U_1\oplus U\not=U_2\oplus U$, so that $(U_1\oplus U)\cap (U_2\oplus U)$ has dimension $1$. Therefore $U_3=(U_1\oplus U)\cap (U_2\oplus U)=U$, but by hypothesis $U_3\not=U$. Interchanging $U_1$ and $U_2$ if necessary we can assume that $U_3\not\subseteq U_1\oplus U$.

For $i=2, 3$, let $W_{1, i}=U_1\oplus U_i$ and $G_{1, i}$ be the subgroup of $\GL(W_{1, i})$ generated by the transvections in $G$ that preserve the plane $W_{1, i}$.
We want to endow $W_{1, i}$ with a suitable $(L, G_{1, i})$-rational structure.
In particular, we want that these structures are compatible.

For each $i=1, 2, 3$, fix
a transvection $T_i\in G$ with centre $U_i$. 
Note that, since $G\vert_U$ is the identity
and $U\not=U_i$, the axis of $T_i$ (that is, the
plane pointwise fixed by it) must be $U_i\oplus U$. 

The transvections $T_1$ and $T_2$ preserve the plane $U_1\oplus U_2$, and
since this plane does not coincide with the axis of $T_1$ or $T_2$,
they both act as nontrivial transvections on $U_1 \oplus U_2$. We apply Lemma \ref{lem:1911}  to the $2$-dimensional $K$-vector space $W_{1, 2}$ (which we endow with a symplectic structure with symplectic basis $\{u_1, u_2\}$ such that $u_1\in U_1$ and $u_2\in U_2$) and the group $G_{1, 2}$ and obtain a matrix $A\in \GL_2(K)$ such that $AU_1=U_1$, $AU_2=U_2$ and a subfield $L$ of $K$ such that $(W_{1, 2})_L$ is an $(L, AG_{1, 2}A^{-1})$-rational plane. Since $U$ is linearly independent from $U_1\oplus U_2$, we can extend $A$ to an element of $\GL(V)$ such that $AU=U$. Without loss of generality we can replace $G$  by $AGA^{-1}$ and $U_3$ by $AU_3$. Thus $(W_{1, 2})_L=\langle u_1, u_2\rangle_L$ is an $(L, G_{1, 2})$-rational plane. 

Since $V=U_1\oplus U_2\oplus U$, we find $a_1,a_2 \in K$ such that $0 \neq u+a_1u_1+a_2u_2 \in U_3$
with some $u \in U$. By hypothesis $a_2\neq 0$. Hence by normalising, we can assume
$0 \neq u_3 := -u+a_1u_1+u_2 \in U_3$, so that we have the relation
\begin{equation}\label{eq:rel}
 u=a_1 u_1 + u_2 + u_3.
\end{equation}
The set $\mathcal{B} = \{u_1,u_2,u\}$ is a $K$-basis of~$V$.
The proof will be finished if we show that $G$ contains a transvection of direction
$u_3-u=-a_1u_1 -u_2\in (U\oplus U_3)\cap (U_1\oplus U_2)$.

Now we consider the plane $W_{1, 3}$, and endow it with a symplectic structure with symplectic basis $\{u_1, u_3\}$. We claim that $\langle u_1, u_3\rangle_L$ is an $(L, G_{1, 3})$-rational plane. Indeed, if we show that $\langle u_1\rangle_L$ is an $(L, G_{1, 3})$-rational line, then Corollary \ref{cor:mitchell}(\ref{cor:mitchell:b}) applied to $U_{1, L}=\langle u_1\rangle_L$ and $U_3$ (which lies in $\mathcal{L}(G_{1,3})$ because by hypothesis $G$ contains a transvection with centre $U_3$) yields the result.
Consider the set of transvections of $G$ with centre $U_1$. As discussed above, their axis is $U\oplus U_1=\{v\in V: p_2(v)=0\}$, where $p_2$ denotes the projection in the second coordinate with respect to the basis~$\mathcal{B}$. Thus any transvection of $G$ with direction $U_1$ can be written as $T_1(v)=v + \lambda p_2(v) u_1$ for some $\lambda\in K$. Restricting $T_1$ to $W_{1, 2}$, and taking into account that $p_2(v)=-v\bullet u_1$ with $v\in W_{1,2}$ for the symplectic structure on $W_{1, 2}$ with symplectic basis $\{u_1, u_2\}$, it follows from the
$(L, G_{1, 2})$-rationality of $\langle u_1, u_2\rangle_L$ that $\lambda\in L$. Now we restrict to $W_{1, 3}$. Note that $p_2(v)= v\bullet u_1$ for $v \in W_{1,3}$, where $\bullet$ denotes the symplectic structure on $W_{1, 3}$ defined by the symplectic basis $\{u_1, u_3\}$. Thus the restriction of $T_1$ to $W_{1, 3}$ is $T_1(v)=v + \lambda (v\bullet u_1 )u_1$. This proves the $(L, G_{1, 3})$-rationality of $\langle u_1\rangle_L$.

The discussion above shows that, if we fix the basis $\{u_1, u_i\}$ of $W_{1, i}$, then $G_{1, i}$ contains $\SL_2(L)$; in particular it contains the reflection given by $(u_1\mapsto -u_1, u_i\mapsto -u_i)$. Since $G$ acts as the identity on $U$, we obtain that $G$ contains the element $\delta_{1, i}$ given by $(u_1\mapsto -u_1, u_i\mapsto -u_i, u\mapsto u)$.
With respect to the basis $\mathcal{B}$, these elements have the shape $\delta_{1, 2}=\begin{pmatrix}-1 & 0 & 0\\
   0 & -1 & 0\\
   0 & 0 & 1
\end{pmatrix}$ and  $\delta_{1, 3}=\begin{pmatrix}-1 & 0 & 0\\
   0 & -1 & 0\\
   0 & 2 & 1
\end{pmatrix}$. Thus 
$T:=\delta_{1, 2}\delta_{1, 3}=\begin{pmatrix}1 & 0 & 0\\
   0 & 1 & 0\\
   0 & 2 & 1\end{pmatrix}$ is a transvection of centre $U$ and axis $U\oplus U_1$. Since $2$ is invertible in $\mathbb{F}_{\ell}$, we can find $k\in \mathbb{Z}$ such that 
$T^k=\begin{pmatrix} 1 & 0 & 0\\ 0 & 1 & 0\\ 0 & 1 & 1\end{pmatrix}$.
The transvection $T^{k}\circ T_3\circ T^{-k}\in G$ has direction  $T^{k}(u_3)=u_3-u$; this is the transvection we were seeking.

\end{proof}

\bibliography{Bibliog}
\bibliographystyle{alpha}

\end{document}